\newtheorem{theorem}{Theorem}[section]
\newtheorem{prop}[theorem]{Proposition}
\newtheorem{lemma}[theorem]{Lemma}
\newtheorem{remark}[theorem]{Remark}
\newtheorem{definition}[theorem]{Definition}
\newtheorem{notation}[theorem]{Notation}
\DeclareMathOperator{\ep}{\epsilon}
\numberwithin{equation}{section}
\def\pf{{\it Proof:}~}
\def\R{{\bf\mathbb R} }
\begin{document}

\title[The first Neumann eigenvalue and the width]{The first Neumann eigenvalue and the width}
\author{Haibin Wang, Guoyi Xu}
\address{Haibin Wang\\ Department of Mathematical Sciences\\Tsinghua University, Beijing\\P. R. China}
\email{wanghb20@mails.tsinghua.edu.cn}
\address{Guoyi Xu\\ Department of Mathematical Sciences\\Tsinghua University, Beijing\\P. R. China}
\email{guoyixu@tsinghua.edu.cn}
\date{\today}
\date{\today}

\begin{abstract}
We prove the sharp lower bound of the first Neumann eigenvalue for bounded convex planar domain in term of its diameter and width.
\\[3mm]
Mathematics Subject Classification: 34B09, 35J25, 35J05.
\end{abstract}
\thanks{G. Xu was partially supported by NSFC 12141103.}

\maketitle

\titlecontents{section}[0em]{}{\hspace{.5em}}{}{\titlerule*[1pc]{.}\contentspage}
\titlecontents{subsection}[1.5em]{}{\hspace{.5em}}{}{\titlerule*[1pc]{.}\contentspage}
\tableofcontents

\section{Introduction}

Payne and Weinberger \cite{PW} firstly showed that for any convex domain $\Omega\subseteq \mathbb{R}^n$ with fixed diameter $=2$,  its first Neumann eigenvalue $\mu_1(\Omega)$ is at least $\frac{\pi^2}{4}$.  Their method is by integral estimates and geometric cutting of the region, one key is the eigenvalue estimate for weighted ODE corresponding the region $\Omega$. 

Li \cite{Li} and Li-Yau \cite{LY} applied the gradient estimates of eigenfunctions to study the lower bound of the first Neumann (Dirichlet) eigenvalue on manifolds with non-negative Ricci curvature. Through a delicate refined gradient estimate of eigenfunctions, Zhong-Yang \cite{ZY} firstly obtain the sharp lower bound of the first eigenvalue for compact Riemannian manifolds with non-negative Ricci curvature.  Zhong-Yang's type gradient estimate of eigenfunction is closely related to the solution of suitable ODE (also see \cite{Kroger}). There is another proof the sharp lower bound of the first eigenvalue by two-points method in \cite{AC}. 

Hang-Wang \cite{HW} firstly showed that $\mu_1(\Omega)= \frac{\pi^2}{4}$ if and only if $\Omega$ is isometric to $[-1, 1]\subseteq \mathbb{R}$.  The sharp lower bound of the first Neumann eigenvalue and its rigidity later is obtained through optimal transport \cite{Klartag}.

\begin{definition}\label{def width of region in Rn}
{We define the width of $\Omega\subseteq \mathbb{R}^n$ as follows:
\begin{align}
\mathrm{Width}(\Omega)= \min_{v\subseteq \mathbb{S}^{n- 1}} \max_{t\in \mathbb{R}} \mathrm{diam}(P_{v, t}\cap \Omega) , \nonumber
\end{align}
where $P_{v, t}\vcentcolon= \{x\in \mathbb{R}^n: (x- tv)\cdot v= 0\}$ and $\displaystyle \mathrm{diam}(P_{v, t}\cap \Omega)\vcentcolon= \max_{x, y\in P_{v, t}\cap \Omega}|x- y|_{\mathbb{R}^n}$.

Furthermore, we define the projective width of $\Omega\subseteq \mathbb{R}^n$ as follows:
\begin{align}
&\mathscr{P}_{v}(\Omega)\vcentcolon= \{x\in \mathbb{R}^n: x\cdot v= 0, x= y+ s\cdot v \ \text{for some}\ y\in \Omega, s\in \mathbb{R}\}, \quad  \forall v\in \mathbb{S}^{n- 1}\subseteq \mathbb{R}^n; \nonumber \\
&\mathscr{PW}(\Omega)= \min_{v\subseteq \mathbb{S}^{n- 1}}  \mathrm{diam}(\mathscr{P}_{v}(\Omega)) , \nonumber
\end{align}
where $\displaystyle \mathrm{diam}(\mathscr{P}_{v}(\Omega)) \vcentcolon= \max_{x, y\in \mathscr{P}_{v}(\Omega)}|x- y|_{\mathbb{R}^n}$.
}
\end{definition}

The main result of this paper is the following Theorem.
\begin{theorem}\label{thm NW ineq}
{There are universal constants $C_1, C_2> 0$, such that for any convex domain $\Omega\subseteq \mathbb{R}^2$ with $\mathrm{diam}(\Omega)= 2$ and $\mathrm{Width}(\Omega)\in (0, C_1)$; we have 
\begin{align}
\mu_1(\Omega)\geq \frac{\pi^2}{4}+ C_2\cdot \mathrm{Width}(\Omega)^2.  \label{nw ineq-main} 
\end{align}
}
\end{theorem}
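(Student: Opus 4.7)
The plan is a quantitative sharpening of Payne--Weinberger's slicing argument that extracts a $w^2$ improvement from a two-dimensional residual invisible to the classical one-dimensional reduction.

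\emph{Coordinates and 1D reduction.} Normalize so that two diameter-realizing points of $\overline\Omega$ are $p_\pm=(\pm 1,0)$. Write $\Omega=\{(x_1,x_2):x_1\in(-1,1),\ a(x_1)<x_2<b(x_1)\}$ with $a$ convex, $b$ concave, and $a(\pm 1)=b(\pm 1)=0$, and set $\sigma(x_1)=b(x_1)-a(x_1)$; this function is concave on $[-1,1]$ and vanishes at $\pm 1$. A short geometric argument using the diameter constraint $|p-p_\pm|\leq 2$ for $p\in\Omega$ shows that for small $w:=\mathrm{Width}(\Omega)$ the minimizing width-direction is nearly $e_2$, hence $\max\sigma\leq Cw$ for a universal $C$. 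Payne--Weinberger's bisection then yields $\mu_1(\Omega)\geq\tilde\mu_1(\sigma)\geq\pi^2/4$, where $\tilde\mu_1(\sigma)$ is the first nontrivial eigenvalue of the weighted ODE $(\sigma u')'+\tilde\mu\sigma u=0$ on $(-1,1)$ with $u'(\pm 1)=0$.

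\emph{Extracting the $w^2$ gain.} The 1D reduction is scale-invariant in $\sigma$ and so cannot see $w$ on its own. To extract the $w^2$ term, decompose the first eigenfunction $f$ as $f(x_1,x_2)=g(x_1)+h(x_1,x_2)$, with $g$ the slicewise vertical mean and $h$ of zero vertical mean on each slice. The Poincar\'e inequality on each vertical slice (length $\leq Cw$) gives $\int_\Omega(\partial_{x_2}h)^2\geq(\pi^2/(Cw)^2)\int_\Omega h^2$, whence $\int_\Omega h^2\leq C'w^2\int_\Omega f^2$; so $f$ is $L^2$-close to the one-dimensional function $g$. Translating the Neumann condition $\partial_{x_2}f=b'(x_1)\partial_{x_1}f$ on $\{x_2=b(x_1)\}$ (and analogously on the lower boundary) into the Rayleigh identity and integrating by parts, one obtains a refined weighted identity for $g$ with a boundary remainder expressed in $a'$, $b'$. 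Convexity of $a$ and concavity of $b$ together with $a(\pm 1)=b(\pm 1)=0$ fix the signs of these slopes at the diameter endpoints; combined with the shape constraints on $\sigma$ imposed by $\mathrm{Width}(\Omega)=w$, this should force a strictly positive contribution of order $w^2$ once the refined identity is tested against the 1D Payne--Weinberger extremal $u_0(t)=\sin(\pi t/2)$.

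\emph{Main obstacle.} The delicate part is to check that the positive boundary contribution is not cancelled by the cross terms $\int_\Omega g'\partial_{x_1}h$ that appear in the expansion of $\int_\Omega(\partial_{x_1}f)^2$. After integration by parts these cross terms reduce to boundary integrals of $h$ along $\partial\Omega$, so they require trace-type estimates stronger than the $L^2$-bound $\|h\|_{L^2(\Omega)}=O(w)$. Establishing the correct sign after all cancellations---and thereby identifying the explicit sharp universal constant $C_2$---constitutes the technical core of the argument, which is presumably where the small-width hypothesis $w\in(0,C_1)$ enters, to dominate the higher-order remainders by the leading $Cw^2$ term.
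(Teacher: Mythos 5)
Your plan reproduces the general philosophy of the paper (cross-sectional reduction to a weighted ODE, slicewise Poincar\'e to show the eigenfunction is close to its vertical average), but at the two decisive points it asserts rather than proves, and in both cases the assertion is not available.

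First, the inequality $\mu_1(\Omega)\geq\tilde\mu_1(\sigma)$ is not a consequence of Payne--Weinberger's bisection. Their needle decomposition compares $\mu_1(\Omega)$ with one-dimensional problems on thin slabs produced by repeated bisection, not with the single weighted ODE whose weight is the global cross-section $\sigma$. The comparison you want only holds up to an error term: writing $\eta=h\tilde u'+\mu_1(\Omega)\int_0^x h\tilde u$ for the modified cross-sectional average $\tilde u$, one gets $\mu_1(\Omega)\geq\mu_1(N)-\int\eta\tilde u'/\int h\tilde u^2$, and the sign of the error is not controlled. Since the gain you are after is only of size $w^2$, you must show this error is $o(w^2)$; doing so is the bulk of the actual proof (refined gradient estimates $|Du|\leq C\max\{\epsilon,\|x\|\}$, $C^0$--$C^1$ closeness of $\tilde u$ to the ODE eigenfunction, the bound $\int_\Omega|D_yu|^2\leq C\epsilon V(\Omega)$, and the planar identity $|h'|=|h_+'|+|h_-'|$, which together give $\int|\eta\tilde u'|\lesssim \epsilon^{3/2}\sqrt{V(\Omega)}\bigl(\int (h')^2\|x\|^2/h\bigr)^{1/2}+V(\Omega)\epsilon^3$). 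Your trace-type worry about the cross terms $\int g'\partial_{x_1}h$ is exactly this issue, and you leave it unresolved.

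Second, the proposed source of the $w^2$ gain --- a sign-definite boundary remainder in $a',b'$ at the diameter endpoints, tested against $\sin(\pi t/2)$ --- is only stated as something that ``should'' work, and it is not the mechanism that actually produces the gain. The weighted ODE eigenvalue is scale-invariant in $\sigma$, so the gain must come from geometry, and it does so through a dichotomy on the axial length $d$ of the projection: if $d\leq 2-cw^2$ then already $\pi^2/d^2\geq\pi^2/4+c'w^2$; if instead $d\geq 2-cw^2$, the diameter constraint forces the cross-section to be extremely small at distance $\sim w^2$ from the endpoints while being $\sim w$ in the middle, which yields $\int_0^{1/10}\frac{(h')^2}{h}\|x\|^2\,dx\geq cV(\Omega)w^2$; this quantity enters positively as the potential term $\tfrac34 (h'/h)^2$ after the Liouville substitution $w=\sqrt h\,\zeta'$, giving $\mu_1(N)\geq\pi^2/d^2+c w^2$ up to errors. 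Neither half of this dichotomy appears in your outline, and your single boundary-sign mechanism does not obviously cover even the model cases (for the near-rectangle the gain comes purely from $d<2$; for the thin lens with $d\approx2$ it comes from the interior variation of $h$). As written, the core quantitative steps --- the $o(w^2)$ control of the PDE/ODE discrepancy and a proved, sign-definite $w^2$ lower bound --- are missing, so the argument is a program rather than a proof.
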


\begin{remark}\label{rem general width case}
If $\mathrm{Width}(\Omega)\geq C_1$, by compactness argument and the rigidity result from \cite{HW}, we can get (\ref{nw ineq-main}). Therefore, the above Theorem confirms a conjecture of Fengbo Hang and Xiaodong Wang \cite{HW} (also see \cite{Sakai}).

In fact (\ref{nw ineq-main}) can be obtained for convex domain $\Omega\subseteq \mathbb{R}^2$ with $\mathrm{diam}(\Omega)= 2$, by compactness argument and the rigidity in \cite{HW}. The main contribution of this paper is a quantitative argument for (\ref{nw ineq-main}) when the width is uniformly small with respect to the diameter of $\Omega$.

The order of width is sharp as $\mathrm{Width}(\Omega)\to 0$. We choose $\Omega=[0,\sqrt{4-\epsilon^2}]\times[0,\epsilon]$, then $\mu_1(\Omega)-\frac{\pi^2}{4}=\frac{\epsilon^2}{4(4-\epsilon^2)}$.
\end{remark}

\begin{remark}\label{rem general dim case}
{The results of this paper are part of the first author's Ph.D thesis. During the process of writing this paper, Amato, Bucur, Fragalà had posted their paper \cite{ABF} on arXiv. Among other things, the above theorem is proved in a more general context in \cite{ABF} by different method. 
}
\end{remark}

For the study of lower bound of the first Dirichlet eigenvalue, Hersch \cite{Hersch} showed $\displaystyle \lambda_1(\Omega)\cdot \mathrm{IR}(\Omega)^2- \frac{\pi^2}{4}\geq 0$, where $\Omega\subseteq \mathbb{R}^2$ is a convex region, $\lambda_1(\Omega)$ is the first Dirichlet eigenvalue of $\Omega$ and $\displaystyle \mathrm{IR}(\Omega)\vcentcolon = \sup_{x\in \Omega} d(x,  \partial \Omega)$ is the inscribed radius of $\Omega$. 

Later, for convex region $\Omega\subseteq \mathbb{R}^n$, Mendez-Hernandez \cite{MH}  (also see Protter \cite{Protter}) gave a proof of the inequality 
\begin{align}
\lambda_1(\Omega)\cdot \mathrm{IR}(\Omega)^2- \frac{\pi^2}{4}\geq \frac{(n- 1)\pi^2}{4}\cdot \Big(\frac{\mathrm{IR}(\Omega)}{\mathrm{diam}(\Omega)}\Big)^2. \label{MH's ineq}
\end{align}

For non-collapsing model with fixed volume, the sharp quantitative form of Faber-Krahn inequality is established in \cite{BDV}.

For convex domain $\Omega \subset \R^2$, Jerison \cite{Jer95} proved that the comparison inequality between the first two Dirichlet eigenvalues and the first two Dirichlet eigenvalues of corresponding weighted ODE, and some geometry invariant of the convex domain is involved in the inequalities. Also see \cite{Grieser} for a survey on the study of Dirichlet eigenvalues on thin tubes. 

For Neumann eigenvalue of convex domain $\Omega \subset \R^2$, Jerison \cite{Jer00} showed
the inequality between the first Neumann eigenvalues of $\Omega$ and the first Neumann eigenvalue of corresponding weighted ODE. His method is again the comparison to the ordinary differential equation, which originates from \cite{PW}.  Later Jerison's result is generalized to higher dimension in \cite{CJK} by discretizing the Neumann eigenfunction of Laplace operator. Zheng used the cross-sectional average of the eigenfunction, which can be viewed as continuous version of argument in \cite{CJK}, to develop the refined comparison in \cite{Zheng}. Our argument of Theorem \ref{thm NW ineq} follows this philosophy of comparison.

The first new thing we obtain is the sharp estimate of Neumann eigenvalue of the weighted ODE in term of the weight $h$, which is implied in the proof of Theorem \ref{thm HW-conj confirmed}. It is a consequence of Liouville transform and the estimate of the rest term $\int_0^d \frac{(h')^2}{h}(\zeta')^2$, where $\zeta$ is the corresponding Neumann eigenfunction of the weighted ODE. To obtain the estimate of the rest term, in Section \ref{sec weighted ODE}, we get the comparison estimate of $\zeta'(x)$ from the weighted ODE, which behaves like $\|x\|$ after detailed analysis. Then the estimate of $\int_0^d \frac{(h')^2}{h}(\zeta')^2$ is reduced to $\int_0^d \frac{(h')^2}{h}\|x\|^2$, which is obtained in Lemma \ref{lem major 2nd term}. The key Lemma \ref{lem major 2nd term} contributes the estimate of the major second term in the lower bound of Neumann eigenvalue, which relies on the fixed diameter and convex property of $\Omega$. 

The second technical tool we establish, is the estimate of the error term $\eta$ relating the PDE and weighted ODE, which is obtained in Section \ref{sec est of error}. This estimate needs the refined gradient estimate for Neumann eigenfunction of $\Omega$, which is discussed in Section \ref{sec refined grad est}. Then in Section \ref{sec comp of eigen and cross}, we use the estimate of $\eta$ and the estimate of weighted ODE, to control the difference between the modified cross-sectional average Laplace operator's first Neumann eigenfunction and the first Neumann eigenfunction of weighted ODE. 

Next we obtain a novel integral estimate of vertical (perpendicular to the axis corresponding to the weighted ODE) directional derivative of Neumann eigenfunction in Section \ref{sec dire derivative est}, which relies on the difference estimate in Section \ref{sec comp of eigen and cross}. All the above argument hold for $n$-dim $\Omega$.

Our first result which holds only for $2$-dim $\Omega$ is, the equations between width and projective width, the related boundary function equations after suitable choice of the $x$-axis. Those equations are established in Section \ref{sec key equality}.

In Section \ref{sec improved est} we get the improved integral estimate of $\eta\tilde{u}'$ for $2$-dim $\Omega$, which is almost the difference between Neumann eigenvalues of Laplace operator and the weighted ODE. They depends on the estimates in Section \ref{sec dire derivative est} and equations in Section \ref{sec key equality}. 

Finally in Section \ref{sec inequality}, we use Liouville transform and Cauchy-Schwartz inequality, to control the error terms appearing in the calculation of eigenvalue by Rayleigh quotient and prove the main theorem. 

\part{The estimates of PDE and ODE}

In the rest of the paper, for $v_0= (1, \mathbf{0})\in \mathbb{R}^n, \mathbf{0}\in \mathbb{R}^{n- 1}$, we assume 
\begin{align}
&\mathrm{diam}(\Omega)= 2,\quad \quad \max\{x\in \mathbb{R}: (x, y)\in \Omega\}= d, \quad \quad \min\{x\in \mathbb{R}: (x, y)\in \Omega\}= 0, \nonumber \\
&\Omega_x= \{(x, y)\in \Omega: y\in \mathbb{R}^{n- 1}\}, \quad \quad h(x)\vcentcolon= V(\Omega_x), \quad \quad \forall x\in [0, d], \nonumber \\
& h(\tau_0)= \max_{x\in [0, d]}h(x), \quad \quad   \tau_0\in (0, d)\subseteq (0, 2),\quad \quad \mathrm{diam}(\mathscr{P}_{v_0}(\Omega))= \mathscr{PW}(\Omega)= \epsilon. \nonumber 
\end{align}

\begin{definition}\label{def w-diameter}
{For $\Omega\subseteq \mathbb{R}^n$, up to the rigid motion of $\mathbb{R}^n$, we define the maximal $d> 0$ satisfying the above conditions, as the $w$-diameter of $\Omega$. 
}
\end{definition}

In the rest of the paper, we use $d$ to denote the $w$-diameter of $\Omega$. 
  
\section{The refined gradient estimates}\label{sec refined grad est}

\begin{notation}\label{notation norm of x}
{We define a special norm for $x\in [0, d]$ as follows:
\begin{align}
\|x\|\vcentcolon= \min\{x, d- x\}, \quad \quad \quad \forall x\in [0, d]. \label{def of a norm on interval} 
\end{align}
}
\end{notation}

For the first Neumann eigenfunction $u$ of $\Omega$ with the first Neumann eigenvalue $\mu_1(\Omega)$, after scaling, we can assume 
\begin{align}
-1\leq -k= \min u\leq u\leq 1= \max u, \label{bound of eigenfunction u}
\end{align}
and note $k\in (0, 1]$. By \cite{LY}, we know that
\begin{align}
\sup_{x\in \Omega}|Du|\leq \sqrt{\mu_1(\Omega)}\cdot \sqrt{1- u^2}\leq \sqrt{\mu_1(\Omega)}. \nonumber 
\end{align}

Recall the following formula on the second line of \cite[Page $5$]{HW}:
\begin{align}
\mu_1(\Omega)^{\frac{1}{2}}\cdot 2\geq \pi+ \frac{3}{4}\cdot \frac{(1- k)^2}{(1+ k)^2}\int_0^{\frac{\pi}{2}}\psi^2(\theta)d\theta,  \nonumber 
\end{align}
where $\psi(\theta)= \frac{\frac{4}{\pi}(\theta+ \cos\theta\sin\theta)- 2\sin\theta}{\cos^2\theta}$.

If $1- k\geq \epsilon$,  then $\displaystyle \mu_1(\Omega)\geq \frac{\pi^2}{4}+ C\cdot \epsilon^2$, we are done.

\textbf{In the rest argument,  we always assume that $\mu_1(\Omega)\leq \frac{\pi^2}{4}+ 1$ and 
\begin{align}
0\leq 1-k< \epsilon, \label{assumption on k}
\end{align}
where $\displaystyle -k= \min_{x\in \Omega}u(x)$.  And assume $\epsilon\leq \frac{1}{20}$}.

\begin{lemma}\label{lem grad est without k}
	{There is a universal constant $c> 0$,  such that if $\epsilon\in (0, c)$,  then there exists a universal constant $C_1> 0$ with
		\begin{align}
			\sup_{x\in [0,  \epsilon]}|Du|(x, y)\leq 2C_1^2\epsilon, \quad \quad \quad \sup_{x\in [0,  \epsilon]}u(x, y)\leq -k+ 4C_1^2\epsilon^2,  \nonumber \\
			\sup_{x\in [d- \epsilon,  d]}|Du|(x, y)\leq 2C_1^2\epsilon, \quad \quad \quad \inf_{x\in [d- \epsilon,  d]}u(x, y)\geq 1- 4C_1^2\epsilon^2. \nonumber
		\end{align}
	}
\end{lemma}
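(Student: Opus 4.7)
The plan is to combine the Li-Yau gradient estimate $|Du|^2 \leq \mu_1(\Omega)(1 - u^2)$ (already recalled in this section) with a Taylor expansion near the minimum of $u$. Setting $\theta := \arcsin u \in [-\pi/2, \pi/2]$, Li-Yau makes $\theta$ globally Lipschitz with a universal constant $C_1$ bounded by $\sqrt{\pi^2/4 + 1}$. Let $p_* = (x_*, y_*)$ realize $\min u = -k$ and $p^* = (x^*, y^*)$ realize $\max u = 1$. Then $\theta(p^*) - \theta(p_*) = \pi - \arccos k \geq \pi - \sqrt{2\epsilon}$ by the assumption $1 - k < \epsilon$, and combined with $|p^* - p_*| \leq \mathrm{diam}(\Omega) = 2$, $|y^* - y_*| \leq \mathscr{PW}(\Omega) = \epsilon$, and $x_*, x^* \in [0, d]$ with $d \geq \sqrt{4 - \epsilon^2}$, this forces---after possibly swapping the sign of $u$---$x_*$ to be close to $0$ and $x^*$ close to $d$ in the $x$-direction.

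With $p_*$ localized, I would Taylor-expand at the minimum. Since $Du(p_*) = 0$, $\Delta u(p_*) = \mu_1 k \leq C_1^2$, and $D^2 u(p_*)$ is positive semi-definite, its eigenvalues lie in $[0, C_1^2]$. Standard $C^{2,\alpha}$ regularity for Neumann eigenfunctions on convex planar domains bounds $D^2 u$ globally in terms of $\mu_1$, so for every $(x, y) \in \bar\Omega$,
\[
u(x, y) + k \leq \tfrac{C_1^2}{2}\, r^2 + O(r^3), \qquad |Du(x, y)| \leq C_1^2\, r + O(r^2), \qquad r := |(x, y) - p_*|.
\]
For $(x, y) \in \Omega$ in the left strip $x \in [0, \epsilon]$, the projective-width bound forces $|y - y_*| \leq \epsilon$, so once $x_* \leq \epsilon$ is secured, $r \leq 2\epsilon$ and the desired inequalities $|Du| \leq 2 C_1^2 \epsilon$, $u + k \leq 4 C_1^2 \epsilon^2$ follow (the coefficient $4$ absorbs the $O(\epsilon^3)$ remainder for $\epsilon$ small). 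The right strip $x \in [d - \epsilon, d]$ is handled by the same argument applied to $-u$.

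The main obstacle is sharpening the location of $p_*$ from $x_* = O(\sqrt\epsilon)$ (all that Li-Yau alone yields) to the required $x_* \leq \epsilon$. Indeed, a naive integration of Li-Yau along a segment from $p_*$ gives
\[
u + k \leq \sqrt{2\mu_1(1-k)}\, r + \tfrac{\mu_1}{2}\, r^2,
\]
whose linear-in-$r$ slack of order $\sqrt\epsilon \cdot r$ would force $r \lesssim \epsilon^{3/2}$ rather than $r \lesssim \epsilon$ to reach the target $\epsilon^2$ bound. Removing this slack requires either a Zhong-Yang style refinement of the gradient estimate (the section's namesake, which eliminates the linear term), or direct exploitation of the Neumann condition on the flat left portion of $\partial\Omega$: there $\partial_x u = 0$ together with the eigenvalue equation reduces $u(0, \cdot)$ to solving $u_{yy} = -\mu_1 u$ on a segment of length $\leq \epsilon$, whose trivial $y$-frequency pins $u(0, \cdot)$ to a constant close to $-k$. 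This is the technically delicate step.
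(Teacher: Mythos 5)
There is a genuine gap, in fact two. First, your Taylor expansion at $p_*$ rests on a ``standard $C^{2,\alpha}$'' pointwise bound $|D^2u|\leq C(\mu_1)$ up to the boundary. No such universal bound is available for a general convex planar domain: the boundary is only convex (it may have corners of angle between $\pi/2$ and $\pi$, near which $D^2u$ blows up like $r^{\pi/\alpha-2}$), and on a thin domain interior Schauder estimates only give $|D^2u|\lesssim \mathrm{dist}(\cdot,\partial\Omega)^{-1}\sim \epsilon^{-1}$, which is useless here; the paper itself only ever uses the $L^2$ Hessian bound $\int_\Omega|D^2u|^2\leq\int_\Omega|\Delta u|^2$. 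The quadratic behaviour near the minimum has to come instead from the Zhong--Yang refined gradient estimate $|Du|\leq C_1\sqrt{u+k}$ (valid here because $1-k<\epsilon$ makes the correction term $\psi$ uniformly bounded), bootstrapped on a cross-section of width $\leq\epsilon$; this is exactly what the paper does and it requires no pointwise Hessian control. Second, you explicitly leave open the step you yourself identify as the crux, namely localizing the minimum: under the standing assumption $\mu_1\leq\pi^2/4+1$ the two-point Li--Yau argument does not even give $x_*=O(\sqrt{\epsilon})$ (you would need $\mu_1\leq\pi^2/4+O(\epsilon)$ for that), and your proposed fix via ``the flat left portion of $\partial\Omega$'' does not exist: $\partial\Omega\cap\{x=0\}$ may be a single point, so there is no segment on which $u_{yy}=-\mu_1u$ can be solved. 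You also gloss over the orientation issue (why the minimum is attached to the end $x=0$ rather than $x=d$) with ``possibly swapping the sign of $u$'', which is not available since the normalization $\min u=-k$, $\max u=1$ is asymmetric; the paper needs the nodal-line location theorem of Jerison/Choi--Jerison--Kim together with the bound on the zero of $\phi$ to produce a point with $u(d,y_0)>0$.

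It is worth noting that the paper's proof never locates the minimum point at all, and this is what makes the lemma provable: letting $s_0$ be the rightmost slice $\{x=s_0\}$ on which the minimum $-k$ is attained, the width bound plus the refined gradient estimate give $u\leq -k+C_1^2\epsilon^2$ and $|Du|\leq C_1^2\epsilon$ on that whole slice; then on $\Omega_1=\Omega\cap\{x<s_0\}$ one has $u<0$ (via connectedness of $u^{-1}(0)$ and $u(d,y_0)>0$), so $\Delta u=-\mu_1u>0$ there, and the interior maximum principle plus the Hopf lemma on the Neumann boundary force the maximum of $u$ over $\overline{\Omega_1}$ to occur on the slice $x=s_0$. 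Hence $u\leq -k+C\epsilon^2$ on all of $[0,\epsilon]$ whether or not $s_0$ is small, and the gradient bound follows by applying $|Du|\leq C_1\sqrt{u+k}$ once more. If you want to salvage your outline, you should replace the Hessian--Taylor step by this gradient-estimate bootstrap and replace the localization of $p_*$ by a maximum-principle argument of this type; as written, both load-bearing steps of your proposal are unsupported.
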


\pf
{\textbf{Step (1)}. It follows from \cite[Theorem $1.1$ (a)]{CJK} (also see \cite[Theorem $1.2$ (a)]{Jer00} for $n= 2$ case) and (\ref{bound of zero points}) of Lemma \ref{lem grad est of ODE}, there is a universal constant $c> 0$,  such that if $\epsilon\in (0, c)$,  then there is some $y_0\in \mathbb{R}$ such that
\begin{align}
u(d, y_0)> 0. \nonumber 
\end{align}

\textbf{Step (2)}.  From \cite{ZY}, let $v= \frac{u- \frac{1- k}{2}}{\frac{1+ k}{2}}$, we have 
	\begin{align}
		|Dv|\leq \sqrt{\mu_1(\Omega)}\sqrt{1- v^2}\sqrt{1+ \psi(\theta)}, \quad \quad \quad \theta= \sin^{-1}v\in [-\frac{\pi}{2}, \frac{\pi}{2}]; \nonumber 
	\end{align}
	where $\displaystyle \psi(t)= a\frac{\frac{4}{\pi}(t+ 2^{-1}\sin (2t))- 2\sin t}{\cos^2t}$ and $a= \frac{1- k}{1+ k}$. 
	
	Then note $\sqrt{\mu_1(\Omega)}\sqrt{1- u}\sqrt{1+ \psi(\theta)}\leq C_1$, where $C_1$ is a universal constant, then we get 
	\begin{align}
		|Du|\leq \sqrt{\mu_1(\Omega)}\sqrt{u+ k}\sqrt{1- u}\sqrt{1+ \psi(\theta)}\leq C_1\sqrt{u+ k}. \label{grad est we need}
	\end{align}
	
	Let $s_0= \max\{x\in [0,d]: \min_y u(x, y)= -k\}$, then by $|Du|\leq \sqrt{\mu_1(\Omega)}$, we have 
	\begin{align}
		\max_{y} u(s_0, y)\leq -k+ \sqrt{\mu_1(\Omega)}\epsilon< 0, \label{on s0 slice}
	\end{align}
	where the last inequality follows from (\ref{assumption on k}),  the bound of $\mu_1(\Omega)$ and $\epsilon\leq \frac{1}{20}$. 
	
	Now from (\ref{on s0 slice}) we have
	\begin{align}
		|Du|(s_0, y)\leq C_1\sqrt{u(s_0, y)+ k} \leq C_1\sqrt{\sqrt{\mu_1(\Omega)}\epsilon}= C_1\mu_1(\Omega)^{\frac{1}{4}} 
		(\epsilon)^{\frac{1}{2}}; \nonumber 
	\end{align}
	therefore we get
	\begin{align}
		u(s_0, y)\leq -k+ \max_z |Du|(s_0, z)\cdot \epsilon= -k+ C_1\mu_1(\Omega)^{\frac{1}{4}}(\epsilon)^{\frac{3}{2}}. \label{improved C0 est} 
	\end{align}
	
	Using (\ref{grad est we need}) and (\ref{improved C0 est}) to estimate $Du(s_0, y)$; then by induction, we get
	\begin{align}
		&|Du(s_0, y)|\leq C_1^{1+ 2^{-1}+ \cdots + 2^{-(m- 1)}}\mu_1(\Omega)^{2^{-(m+ 1)}}(\epsilon)^{2^{-1}+\cdots + 2^{-m}}. \nonumber
	\end{align}
	Let $m\rightarrow\infty$, we obtain
	\begin{align}
		|Du|(s_0, y)\leq C_1^2\epsilon. \label{C1 est on slice best}
	\end{align} 
	
	Now we have 
	\begin{align}
		u(s_0, y)\leq -k+ \sup_z |Du|(s_0, z)\cdot \epsilon\leq -k+ C_1^2\epsilon^2. \label{C0 est on slice best} 
	\end{align}
	
\textbf{Step (3)}. If $s_0> \epsilon$, define 
\begin{align}
\Omega_1= \Omega\cap \{x< s_0\},  \quad \quad \quad u(x_0)= \max_{x\in \overline{\Omega_1}}u(x).  \nonumber 
\end{align}

If $u(x_0)\geq 0$,  then $x_0\notin \{(s_0, y)\in \Omega\}$.  Define $q= (s_0, 0),  \tilde{q}= (d, y_0)$,  then
\begin{align}
u^{-1}(0)\cap \overline{x_0, q}\neq \emptyset,  \quad \quad u^{-1}(0)\cap \overline{q, \tilde{q}}\neq \emptyset.  \nonumber 
\end{align}

It is well-known that $u^{-1}(0)$ is connected,  choose 
\begin{align}
z_1\in u^{-1}(0)\cap \overline{x_0, q},  \quad \quad z_2\in u^{-1}(0)\cap \overline{q, \tilde{q}};  \nonumber 
\end{align}
we get that $u^{-1}(0)\cap \{(s_0, y)\in \Omega\}\neq \emptyset$,  which contradicts (\ref{on s0 slice}).  

Hence  $u(x_0)< 0$.
	
Then $\Delta u= -\mu_1 u> 0$ on $\Omega_1$. If $x_0\in \Omega_1$, then $\Delta u(x_0)\leq 0$, which is the contradiction.
	
	If $x_0\in \mathrm{Interior}(\partial\Omega_1\cap \partial\Omega)$, then by the Strong Maximum Principle, we get $\frac{\partial u}{\partial\vec{n}}(x_0)> 0$, which is the contradiction to the Neumann boundary condition on $u$. 
	
	Hence $x_0\in \Omega\cap \{x= s_0\}$, we get 
	\begin{align}
		\max_{x\in \overline{\Omega_1}}u(x)= u(x_0)\leq \max_{y} u(s_0, y)\leq -k+ \sqrt{\mu_1(\Omega)}\epsilon, \label{max is close to -k too} 
	\end{align}
	 
Therefore we have
	\begin{align}
		\sup_{x\in [0, \epsilon]}u(x, y)\leq u(s_0, y)\leq -k+ C_1^2\epsilon^2. \nonumber 
	\end{align}

Now by (\ref{grad est we need}) again, we obtain
	\begin{align}
		\sup_{x\in [0,  \epsilon]}|Du|(x, y)\leq C_1^2\epsilon. \nonumber 
	\end{align}
	
	If $s_0\leq \epsilon$,  assume $u(s_0, y_0)= -k$, we have 
	\begin{align}
		u(x, y)\leq u(s_0, y_0)+ \sup_{x\in [0, \epsilon]}|Du|(x, z)\cdot 2\epsilon= -k+ 2\epsilon\cdot \sup_{x\in [0, \epsilon]}|Du|(x, z).  \nonumber 
	\end{align}
	
	Then we obtain
	\begin{align}
		&|Du|(x, y)\leq C_1\sqrt{u+ k}\leq C_1\cdot \sqrt{2\epsilon\cdot \sup_{x\in [0, \epsilon]} |Du|}, \nonumber
		\end{align}
		
	Taking sup for $x\in [0, \epsilon]$ on the left side of the above inequality,  	we get $\displaystyle \sup_{x\in [0, \epsilon]}|Du|(x, y)\leq 2C_1^2\epsilon$.
		Now 
		\begin{align}
		&u(x, y)\leq -k+ 2\epsilon\cdot \sup |Du|.\leq -k+ 4C_1^2\epsilon^2 .  \nonumber 
	\end{align}
	
	The inequality on $[d- \epsilon,  d]$ is obtained similarly.   
}
\qed

Now we have the following gradient estimates for any dimension.
\begin{prop}\label{prop directional deri est-any dim}
	{There is a universal constant $c> 0$,  such that if $\epsilon\in (0, c)$,  then we can find a universal constant $C> 0$ with
		\begin{align}
			|Du(x, y)|\leq C\cdot \max\{\epsilon, \|x\|\},  \quad \quad \forall (x, y)\in \Omega .  \label{grad est in term of x}
		\end{align}
	}
\end{prop}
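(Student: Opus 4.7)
The plan is to divide the $x$-range $[0,d]$ into the thin ends $[0,\epsilon]$, $[d-\epsilon,d]$ and a middle piece $[\epsilon,d-\epsilon]$. If $d\leq 2\epsilon$, the two end pieces already cover $[0,d]$ and Lemma \ref{lem grad est without k} gives $|Du|\leq 2C_1^2\epsilon$ everywhere, which matches the claim since $\max\{\epsilon,\|x\|\}=\epsilon$; so assume $d>2\epsilon$. On the two end pieces the bound is again immediate from Lemma \ref{lem grad est without k} because $\|x\|\leq\epsilon$ there. Only the middle remains, and a symmetric argument reduces it to the first half $x\in[\epsilon,d/2]$.

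Fix $(x,y)\in\Omega$ with $x\in[\epsilon,d/2]$, so that $\|x\|=x$. The starting point is the refined Zhong--Yang type gradient estimate (\ref{grad est we need}), $|Du|\leq C_1\sqrt{u+k}$, which reduces the target $|Du|(x,y)\leq Cx$ to a quadratic pointwise bound $u(x,y)+k\leq C^2 x^2$. To propagate smallness of $u+k$ from the left thin slice, pick any $(x_1,y_1)\in\Omega$ with $x_1\in[0,\epsilon]$, which exists because the $x$-range of $\Omega$ is $[0,d]$. By convexity of $\Omega$ the segment $\gamma(t):=(1-t)(x_1,y_1)+t(x,y)$, $t\in[0,1]$, lies inside $\Omega$ and has length $L\leq\sqrt{x^2+|y-y_1|^2}\leq\sqrt{x^2+\epsilon^2}$; the last bound uses $\mathscr{PW}(\Omega)=\epsilon$, which forces any two $y$-coordinates occurring in $\Omega$ to differ by at most $\epsilon$.

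Along $\gamma$, set $\phi(t):=u(\gamma(t))+k\geq 0$. The chain rule together with (\ref{grad est we need}) yields $|\phi'(t)|\leq L\,|Du|(\gamma(t))\leq C_1 L\sqrt{\phi(t)}$, which after a standard $\sqrt{\phi+\delta}$, $\delta\downarrow 0$, regularization integrates to $\sqrt{\phi(1)}\leq\sqrt{\phi(0)}+\frac{1}{2}C_1 L$. Combining the bound $\phi(0)\leq 4C_1^2\epsilon^2$ from Lemma \ref{lem grad est without k} with $L\leq\sqrt{x^2+\epsilon^2}$ gives $u(x,y)+k=\phi(1)\leq C^2(x^2+\epsilon^2)\leq 2C^2 x^2$ (using $\epsilon\leq x$), and applying (\ref{grad est we need}) one last time produces $|Du|(x,y)\leq C_1\sqrt{\phi(1)}\leq C'\max\{\epsilon,x\}$, as required.

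The symmetric half $x\in[d/2,d-\epsilon]$ is handled identically after noting that the same chain $|Du|\leq\sqrt{\mu_1(\Omega)(1-u)(u+k)(1+\psi)}$ used to derive (\ref{grad est we need}) also yields a companion estimate $|Du|\leq C_1\sqrt{1-u}$ (by bounding $u+k\leq 1+k\leq 2$ instead of $1-u\leq 1+k\leq 2$); the role of the small end-slice bound is then played by $\inf_{x\in[d-\epsilon,d]}u\geq 1-4C_1^2\epsilon^2$ from Lemma \ref{lem grad est without k}. The only subtle point in the whole argument is the singularity of the differential inequality $\phi'\leq C\sqrt{\phi}$ at $\phi=0$, which a one-line regularization removes; everything else is routine integration combined with convexity of $\Omega$ and the width hypothesis.
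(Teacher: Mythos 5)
Your argument is correct, and it reaches (\ref{grad est in term of x}) by a more direct route than the paper. Both proofs rest on the same two inputs: the end-slice estimates of Lemma \ref{lem grad est without k} and the refined gradient bound (\ref{grad est we need}), together with its mirror $|Du|\le C\sqrt{1-u}$ near the maximum, which you state explicitly and which the paper uses only implicitly in its ``similar argument'' for $x\in[\tfrac d2,d]$. The difference is in how smallness of $u+k$ is propagated from the left cap. The paper discretizes $[\epsilon,\tfrac d2]$ into dyadic intervals $[2^{-n-1},2^{-n}]$ and runs an induction on scales, at each step closing a self-improving inequality of the form $|Du|_{n-1}\le C_1\sqrt{25C_1^2\,2^{-2n}+3\cdot 2^{-n}|Du|_{n-1}}$ to obtain $|u|_n\le -k+25C_1^2\,2^{-2n}$ and $|Du|_n\le 5C_1^2\,2^{-n}$. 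You instead integrate the square-root differential inequality $\tfrac{d}{dt}\sqrt{u\circ\gamma+k}\le\tfrac12 C_1 L$ along a single straight segment from the cap $\{x\le\epsilon\}$ to the target point, with convexity and $\mathscr{PW}(\Omega)=\epsilon$ giving $L\le\sqrt{x^2+\epsilon^2}$, so the quadratic bound $u+k\le Cx^2$ (hence $|Du|\le Cx$ via (\ref{grad est we need})) comes out in one stroke. Your version is shorter and makes the mechanism --- a Gronwall-type comparison for $\sqrt{u+k}$ --- transparent; the paper's dyadic bootstrap is essentially a discretization of the same comparison and uses the same geometric input (its increments $2^{-n}+2\epsilon$ likewise account for the $\epsilon$-travel in the $y$-directions), and both arguments work in all dimensions, as required here. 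The only points that must remain explicit are the two you already flagged: the $\delta$-regularization of $\sqrt{\phi}$ at $\phi=0$, and the companion estimate $|Du|\le C\sqrt{1-u}$ obtained from the chain in (\ref{grad est we need}) by bounding $\sqrt{u+k}\le\sqrt{2}$ instead of $\sqrt{1-u}$.
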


\pf
{\textbf{Step (1)}.  If $|x|\le \epsilon$ or $|x-d|\le \epsilon$,  by Lemma \ref{lem grad est without k}, we have $|Du(x, y)|\leq C\cdot \epsilon$.

Without loss of generality, we only need to show that for $x\in [\epsilon,d/2]$, $|Du(x, y)|\leq C|x|$.
	
Choose $m(\epsilon)\in \mathbb{N}$, such that $2^{-m-1}\le \epsilon \le 2^{-m}$. For $n=0, 1, \cdots,  m$, let
\[|u|_n=\sup_{x\in [2^{-n-1}, 2^{-n}]}|u|,\quad\quad |Du|_n=\sup_{x\in [2^{-n-1}, 2^{-n}]}|Du|.\] 
In Lemma \ref{lem grad est without k}, we have
$\sup_{x\in [0, \epsilon]}u(x, y)\leq -k+ 4C_1^2\epsilon^2$. 
	\begin{align}\label{estimate of |u|_m}
		|u|_m\le -k+ 4C_1^2\epsilon^2+(2^{-m-1}+ \epsilon)|Du|_m\le -k+4C_1^22^{-2m}+ 2\cdot 2^{-m}|Du|_m.
	\end{align}
	By (\ref{grad est we need}),
	\begin{align}
		&|Du(x,y)|\le C_1\sqrt{u+k}\le C_1\sqrt{4C_1^22^{-2m}+ 2\cdot 2^{-m}|Du|_m}.\nonumber
	\end{align}
	Hence we get $|Du|_m\le 5C_1^22^{-m}$ . By (\ref{estimate of |u|_m}), $|u|_m\le -k+25C_1^22^{-2m}$.
	
\textbf{Step (2)}.	We prove by induction that $|u|_n\le -k+25C_1^22^{-2n} , |Du|_n\le 5C_1^22^{-n}$. 
	
	If $|u|_n\le -k+25C_1^22^{-2n} , |Du|_n\le 5C_1^22^{-n}$,  note $C_1\geq 1$,  we get 
	\begin{align}
		&|u|_{n-1}\le |u|_n+(2^{-n}+2\epsilon)|Du|_{n-1}\le |u|_n+3\cdot 2^{-n}|Du|_{n-1}\nonumber\\
		&|Du|_{n-1}\le C_1\sqrt{k+|u|_{n-1}}\le C_1\sqrt{25C_1^22^{-2n}+3\cdot 2^{-n}|Du|_{n-1}}.  \nonumber
	\end{align}
	Thus $|Du|_{n-1}\le 5C_1^22^{-(n-1)}$ and 
	\begin{align}
	|u|_{n-1}\le |u|_n+ 3\cdot 2^{-n}|Du|_{n-1}\leq -k+ 25C_1^2 2^{-2(n- 1)}
.  \nonumber 
\end{align}
	
	Note that for $x\in [2^{-n-1}, 2^{-n}]$,  we have $2^{-n}\leq 2x$.  Then we get
	\begin{align}
	|Du|(x)\le |Du|_n\leq 10C_1^2|x|.  \nonumber 
	\end{align}
	From the above we get the conclusion for $x\in [0, \frac{d}{2}]$. 
	
	Similar argument yields the conclusion for $x\in [\frac{d}{2},  d]$.  
	
	Now there is a universal constant $C_1> 0$ such that  
		\begin{align}
			|Du(x, y)|\leq 10C_1^2\cdot \max\{\epsilon, \|x\|\},  \quad \quad \forall (x, y)\in \Omega .  \nonumber 
		\end{align}
		}
		\qed
	
\section{The weighted ODE}\label{sec weighted ODE}

Define $H=\{f\in W^{1,2}(0,d)| \int_0^d hf=0\}$, and for $f\in H$, we define the \textbf{Rayleigh quotient of $f$} as 
\begin{align}
	\mathscr{R}(f)= \frac{\int_0^d h(f')^2}{\int_{0}^d hf^2}. \nonumber 
\end{align}

\begin{definition}\label{def mu1 N}
	{We define
		\begin{align}
			\mu_1(N)\vcentcolon= \inf_{f\in H- \{0\}}\mathscr{R}(f).\nonumber
		\end{align}
	}
\end{definition}

\begin{theorem}\label{thm existence of phi to ODE}
	{There is a unique function $\phi\in C^1[0, d]\cap C^\infty(0, d)$,  such that 
		\begin{equation}\label{height ODE}
			\left\{
			\begin{array}{rl}
				&-(h\phi')'= \mu_1(N)\cdot h\phi\ ,  \quad \quad \quad \quad \quad\quad \quad x\in (0, d), \\
				&\phi(0)= -1,  \quad \quad \phi'(0)= \phi'(d)= 0 \ ; \quad \quad \quad \quad \quad\quad \quad 
			\end{array} \right.
		\end{equation}
	}
\end{theorem}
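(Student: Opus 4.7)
The plan is to produce $\phi$ as a suitably normalized minimizer of the Rayleigh quotient defining $\mu_1(N)$, read off the ODE and Neumann boundary conditions as the Euler--Lagrange system, and then use Sturm--Liouville oscillation theory to make the normalization $\phi(0)=-1$ both possible and unique.

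First I would apply the direct method of the calculus of variations. Take a minimizing sequence $\{f_n\}\subset H$ normalized by $\int_0^d hf_n^2\,dx=1$ with $\int_0^d h(f_n')^2\,dx\to\mu_1(N)$. Since $h$ is concave and continuous on $[0,d]$ (being the slice function of a convex body), $h$ is bounded above on $[0,d]$ and bounded below by a positive constant on every compact subset of $(0,d)$; therefore $\{f_n\}$ is bounded in $W^{1,2}((0,d);h\,dx)$ and, after extracting a subsequence, converges weakly in that space and strongly in $L^2((0,d);h\,dx)$ via a Rellich--Kondrachov argument on interior compact subsets together with equi-integrability at the endpoints. The limit $\phi_0$ lies in $H$ because the mean-zero constraint passes to the limit under $L^2(h\,dx)$-convergence, has $\int h\phi_0^2\,dx=1\neq 0$, and attains the infimum by lower semicontinuity of the Dirichlet-type energy.

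Next, the Euler--Lagrange equation for the constrained problem reads
\[\int_0^d h\phi_0'\psi'\,dx=\mu_1(N)\int_0^d h\phi_0\psi\,dx,\qquad\forall\,\psi\in W^{1,2}(0,d),\]
where the Lagrange multiplier for the constraint $\int h\psi\,dx=0$ drops out because $\phi_0$ itself has mean zero. Testing against $\psi\in C_c^\infty(0,d)$ yields $-(h\phi_0')'=\mu_1(N)h\phi_0$ weakly, and standard ODE bootstrap on intervals where $h$ is smooth gives $\phi_0\in C^\infty(0,d)$. Integrating by parts in the variational identity and using the pointwise ODE produces the boundary identity $h(d)\phi_0'(d)\psi(d)-h(0)\phi_0'(0)\psi(0)=0$ for all admissible $\psi$. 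For the $C^1[0,d]$ conclusion with $\phi_0'(0)=\phi_0'(d)=0$, I would integrate the ODE once to obtain $h(x)\phi_0'(x)=-\mu_1(N)\int_0^x h\phi_0\,dt$ (the constant of integration being pinned down by the finiteness of the Dirichlet energy) and then show $\phi_0'(x)\to 0$ as $x\to 0^+$ and as $x\to d^-$, using $\phi_0\in L^\infty$ and the continuity of $h$.

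Finally, I would invoke standard Sturm--Liouville oscillation theory for the singular problem $(h\phi')'+\mu h\phi=0$ with natural boundary conditions: the first non-trivial eigenvalue is simple and its eigenfunction changes sign exactly once in $(0,d)$. In particular $\phi_0(0)\neq 0$, so after multiplying by an appropriate scalar we may normalize $\phi_0(0)=-1$, and simplicity of the eigenvalue then forces any other $\tilde\phi$ with the stated boundary data to coincide with $\phi_0$. The main technical obstacle I expect is the boundary regularity when $h$ vanishes at an endpoint (as happens when $\Omega$ has a vertex at $x=0$ or $x=d$): the ODE is singular there, and neither the variational principle nor interior elliptic regularity immediately yields $\phi_0'\in C^0[0,d]$ or $\phi_0(0)\neq 0$. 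Careful integration of the ODE with $L^\infty$-control of $\phi_0$ near the endpoints, together with oscillation arguments ruling out $\phi_0(0)=0$, is what makes this step substantive.
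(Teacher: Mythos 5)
Your overall strategy---minimize the Rayleigh quotient, obtain the weak Euler--Lagrange identity for all $\psi\in W^{1,2}(0,d)$ by testing with constants, and read the Neumann conditions off the integrated equation $h(x)\phi_0'(x)=-\mu_1(N)\int_0^x h\phi_0\,dt$---is the same variational route the paper takes, and those parts are sound. The genuine gap is that every step you defer is precisely where the difficulty lives: the behaviour at an endpoint where $h$ vanishes, which is the generic situation for a convex body. Concretely: (i) strong convergence of the minimizing sequence in $L^2(h\,dx)$ does not follow from interior Rellich--Kondrachov; you must rule out concentration of the weighted mass at the endpoints, which requires a quantitative estimate (e.g.\ $\int_0^x h\le Cxh(x)$ for $x\le d/2$, coming from concavity of $h^{1/(n-1)}$) fed into a Hardy-type bound---``equi-integrability at the endpoints'' is asserted, not proved. (ii) Your boundary step uses $\phi_0\in L^\infty$, but membership in $W^{1,2}(h\,dx)$ does not give boundedness near a point where $h\to 0$: the energy bound alone permits blow-up there (like $\sqrt{|\log x|}$ for $n=2$, polynomially for larger $n$), so a bootstrap from the integrated ODE is needed before you may conclude $\phi_0'(x)\to 0$ and $\phi_0\in C^1[0,d]$. (iii) The appeal to ``standard Sturm--Liouville oscillation theory'' for simplicity and for $\phi_0(0)\neq 0$ is again an appeal for a degenerate operator and is not off-the-shelf; a self-contained substitute is a Volterra/Gronwall uniqueness argument for $h\phi'(x)=-\mu_1(N)\int_0^x h\phi\,dt$ based on the same inequality $\int_0^x h\le Cxh(x)$. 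You name these obstacles in your final paragraph, but naming them does not discharge them, and without them the statement as given (in particular $\phi'(0)=\phi'(d)=0$, the legitimacy of the normalization $\phi(0)=-1$, and uniqueness) is not proved.

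For comparison, the paper does not attack the degenerate problem directly: in Lemma \ref{lem existence of minimizer-new} it first solves the problem for the strictly positive weights $h_k=(h^{\frac{1}{n-1}}+1/k)^{n-1}$, normalizes the eigenfunctions $G_k$ in the sup norm, and uses concavity of $h_k^{\frac{1}{n-1}}$ to obtain the uniform bounds $|h_k'/h_k|\le \|x\|^{-1}$, $|G_k'(x)|\le C\|x\|$ and $|G_k''|\le C$ on all of $[0,d]$; Arzel\`a--Ascoli then yields a $C^1[0,d]$ minimizer satisfying the weak identity for every $v\in W^{1,2}(0,d)$, with the Neumann conditions coming for free from $|G_k'(x)|\le C\|x\|$. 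If you wish to keep the direct method on the degenerate problem, the endpoint estimates in (i)--(iii) are exactly what you must supply; the approximation scheme is the cleaner way to obtain them.
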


\pf
{It follows from Lemma \ref{lem existence of minimizer-new} and the regularity theory of weak solution to ODE.
}
\qed

\begin{notation}\label{notation phi}
	{In the rest of the paper,  unless otherwise mentioned,  the function $\phi$ is the first Neumann eigenfunction of $N= ([0, d],  hdx)$ with respect to the first Neumann eigenvalue $\mu_1(N)$ and $\phi(0)= -1$.
	}
\end{notation}

From \cite[Page $409$]{AS},  we know that $j_{0, 1}\leq 2.5$,  where $j_{0, 1}$ is the first positive zero of the Bessel function $J_0$ of order $0$.  By \cite[Proposition $2$]{Kroger-upper},  we get 		
\begin{align}
	\frac{\pi^2}{4}\leq \mu_1(N)\leq (2j_{0, 1})^2\leq 25.  \label{uniform upper bound of mu 1 N}
\end{align}

\begin{lemma}\label{lem grad est of ODE}
	{There is a universal constant $C= C(n)> 0$,  assume $\phi(x_0)= 0$,  then 
		\begin{align}
			&\phi'(x)\geq C^{-1}\cdot \|x\|,  \quad \quad \quad \forall \|x\|\in [0,  \frac{1}{10^n}],   \nonumber \\
			& x_0\in [\frac{1}{10^n},  d- \frac{1}{10^n}],  \label{bound of zero points} \\
			&\sup_{x\in [0, d]}|\phi(x)|\leq C,  \label{uniform bound of phi}\\
			&0\leq \phi'(x)\leq C\cdot \|x\|, \quad \quad \forall x\in [0, d]. \label{upper bound of phi' in x}
		\end{align}
	}
\end{lemma}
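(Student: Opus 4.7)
My plan is to derive all four estimates from the integrated form of (\ref{height ODE}) together with the Brunn--Minkowski concavity of $h^{1/(n-1)}$. Since $\phi'(0) = 0$ and integrating the ODE once over $[0,d]$ forces $\int_0^d h\phi\,dx = 0$, I have the two identities
\begin{equation*}
h(x)\phi'(x) \;=\; -\mu_1(N)\int_0^x h(t)\phi(t)\,dt \;=\; \mu_1(N)\int_x^d h(t)\phi(t)\,dt.
\end{equation*}
The first non-constant Neumann eigenfunction of a one-dimensional Sturm--Liouville problem has a single sign change, so $\phi<0$ on $[0,x_0)$ and $\phi>0$ on $(x_0,d]$; consequently $\int_0^x h\phi\leq 0$ for all $x$, and the first identity gives $\phi'(x)\geq 0$ throughout. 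The concavity hypothesis implies that $h$ is non-decreasing on $[0,\tau_0]$ and non-increasing on $[\tau_0,d]$, and supplies the key pointwise comparisons $h(t)\geq(t/x)^{n-1}h(x)$ for $t\leq x\leq \tau_0$ (hence $\int_0^x h\geq h(x)x/n$) and its reflected analog on $[\tau_0,d]$.

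The zero location (\ref{bound of zero points}) and the near-boundary lower bound on $\phi'$ are then essentially immediate. Since $|\phi|\leq 1$ on $[0,x_0]$ and $\int_0^x h\leq xh(x)$ on $[0,\min(x_0,\tau_0)]$, the first identity gives $\phi'(x)\leq \mu_1(N)\,x$ there; integrating from $0$ to $x_0$ yields $1 = \phi(x_0)-\phi(0)\leq \mu_1(N)x_0^2/2$, so $x_0\geq\sqrt{2/\mu_1(N)}\geq 1/4$ by (\ref{uniform upper bound of mu 1 N}). The same argument applied to the reflected eigenfunction $\bar\phi(x):=-\phi(d-x)/\phi(d)$ gives $d-x_0\geq 1/4$, and since $1/4>1/10^n$ this proves (\ref{bound of zero points}). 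The same interval estimate gives $\phi(x)\leq -1+\mu_1(N)x^2/2$, so $-\phi(x)\geq 1/2$ for $x\leq 1/\sqrt{\mu_1(N)}$ (a fortiori for $x\leq 1/10^n$); combining with the concavity lower bound $\int_0^x h\geq h(x)x/n$ the first identity yields
\[
\phi'(x) \;\geq\; \frac{\mu_1(N)}{2\,h(x)}\int_0^x h \;\geq\; \frac{\mu_1(N)}{2n}\,x \;\geq\; C(n)^{-1}\,\|x\|
\]
on $[0,1/10^n]$, with the symmetric bound near $d$.

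The uniform bound (\ref{uniform bound of phi}) is the main technical step, since estimates on $\phi'$ on $[\tau_0,d]$ involve $\sup|\phi|$ as a factor, leading to a self-referential bound that does not close on its own. I will break the circularity with a midpoint argument. Let $x_m\in[x_0,d]$ satisfy $\phi(x_m)=\phi(d)/2$; if $x_m\geq\tau_0$, the bound $\phi'(x)\leq \mu_1(N)\phi(d)(d-x)$ on $[\tau_0,d]$ (from $\int_x^d h\leq (d-x)h(x)$) integrates to force $d-x_m\geq 1/\sqrt{\mu_1(N)}$, and if $x_m<\tau_0$ the same argument with $\tau_0$ in place of $x_m$ yields $d-\tau_0\geq 1/\sqrt{\mu_1(N)}$ together with $\phi(\tau_0)\geq\phi(d)/2$. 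In either case, monotonicity of $\phi$ gives $\int_{x_\ast}^d h\phi\geq (\phi(d)/2)\int_{x_\ast}^d h$ where $x_\ast\in\{x_m,\tau_0\}$, while orthogonality yields $\int_{x_\ast}^d h\phi\leq\int_{x_0}^d h\phi=\int_0^{x_0}h(-\phi)\leq\int_0^d h\leq d\,h(\tau_0)$; the concavity estimate on $[x_\ast,d]$ produces the lower bound $\int_{x_\ast}^d h\geq h(\tau_0)/C'(n)$ (using $d-x_\ast\geq 1/\sqrt{\mu_1(N)}$ and the comparison $h(x_\ast)\geq h(\tau_0)/C''(n)$ from concavity), which closes to $\phi(d)\leq C(n)$. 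With this in hand, (\ref{upper bound of phi' in x}) follows from a short case analysis: on $[0,\tau_0]$, $|\phi'|\leq \mu_1(N)C(n)x$; on $[\tau_0,d]$, $|\phi'|\leq \mu_1(N)C(n)(d-x)$; and for $x$ between $d/2$ and $\tau_0$ I switch integrated identities and use $h(\tau_0)/h(x)\leq 2^{n-1}$ (from concavity on that interval), obtaining $|\phi'(x)|\leq C(n)\|x\|$ throughout $[0,d]$.
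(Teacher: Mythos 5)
Your overall strategy (integrate the ODE once, use the Brunn--Minkowski concavity of $h^{1/(n-1)}$ to compare $h(t)$ with $h(x)$, locate the zero $x_0$, bound $\phi(d)$, then read off the two-sided bounds on $\phi'$) is the same as the paper's, but two steps do not go through as written. First, every upper bound on $\phi'$ in your argument rests on the monotonicity of $h$ on one side of $\tau_0$ (namely $\int_0^x h\le xh(x)$ on $[0,\tau_0]$ and $\int_x^d h\le (d-x)h(x)$ on $[\tau_0,d]$), and you then integrate these bounds over intervals that need not lie on that side of $\tau_0$. Nothing prevents $\tau_0$ from being arbitrarily close to $0$ or to $d$: for a thin triangle whose maximal cross-section is the slice $\{x=0\}$ one has $\tau_0=0$, and for the reflected triangle $\tau_0=d$. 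In the first configuration your deduction ``$1=\phi(x_0)-\phi(0)\le\mu_1(N)x_0^2/2$'' (and likewise the claim $-\phi\ge\tfrac12$ on $[0,1/\sqrt{\mu_1(N)}]$) uses $\phi'\le\mu_1(N)x$ on $[\tau_0,x_0]$, where it was never established; in the second configuration the case $x_m<\tau_0$ of your midpoint argument asserts $d-\tau_0\ge 1/\sqrt{\mu_1(N)}$, which does not follow from what you have (you only know $\phi(\tau_0)\ge\phi(d)/2$, i.e.\ the wrong-direction control on $\phi(d)-\phi(\tau_0)$) and is in fact false when $\tau_0$ is near $d$. The paper sidesteps the location of $\tau_0$ entirely by comparing with the far endpoint: concavity together with $h\ge 0$ at $d$ gives $h(t)/h(x)\le\bigl(\tfrac{d-t}{d-x}\bigr)^{n-1}$ for all $t<x$, hence $\phi'(x)\le 2^n\mu_1(N)x$ on all of $[0,d/2]$, and symmetrically on $[d/2,d]$, at the harmless cost of a factor $2^n$; this repairs both places and also dissolves the ``self-referential'' difficulty that motivated your midpoint device, since $\phi\ge\tfrac12\phi(d)$ near $d$ then comes for free.

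Second, the near-$d$ half of the lower bound $\phi'(x)\ge C^{-1}\|x\|$ is not symmetric to the near-$0$ half, because only $\phi(0)=-1$ is normalized: the reflected computation yields $\phi'(x)\ge \tfrac{\mu_1(N)}{2n}\,\phi(d)\,(d-x)$, which is useless unless $\phi(d)$ is bounded \emph{below} by a positive universal constant. You prove only the upper bound $\phi(d)\le C(n)$; the phrase ``with the symmetric bound near $d$'' hides a genuinely missing step. The paper's Step (5) supplies it: using the orthogonality $\int_0^d h\phi=0$, the estimate $\phi\le -1+2^{n-1}\mu_1(N)x^2$ near $0$, and the concavity comparison for $h$, one obtains $\phi(d)\ge C(n)>0$, which is then inserted into the near-$d$ estimate (\ref{lower bound of phi' at d end}). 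Without some version of this argument, the conclusion $\phi'(x)\ge C^{-1}(d-x)$ for $d-x\le 10^{-n}$ remains unproved, even though (\ref{bound of zero points}), (\ref{uniform bound of phi}) and (\ref{upper bound of phi' in x}) would survive once the first issue is fixed.
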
		

\pf
{\textbf{Step (1)}. Note $\phi'\geq 0$ is from the existence of the solution $\phi$. In the rest argument, we use $\mu_1$ to denote $\mu_1(N)$ for simplicity.
	
Note that $h^{\frac{1}{n-1}}$ is concave, for $t<x$, we have $h^{\frac{1}{n-1}}(x)\ge \frac{d-x}{d-t}\cdot h^{\frac{1}{n-1}}(t)+ \frac{x-t}{d-t}\cdot h^{\frac{1}{n-1}}(d)\ge \frac{d-x}{d-t}\cdot h^{\frac{1}{n-1}}(t)$. Thus
		\begin{align}
			\frac{h(t)}{h(x)}\le (\frac{d- t}{d- x})^{n-1}\label{Concave property of h^{1/(n-1)}}
	\end{align}
	
	Integrating (\ref{height ODE}) from $0$ to $x$, using $\phi(t)\geq \phi(0)= -1$ and (\ref{Concave property of h^{1/(n-1)}}) we get
	\begin{align}
		\phi'(x)&= -\frac{1}{h}\int_0^x\mu_1 h(t)\phi(t)dt\leq \mu_1\int_0^x \frac{h(t)}{h(x)}dt \nonumber \\
		&\leq \mu_1\int_0^x (\frac{d- t}{d- x})^{n-1}dt= \mu_1\cdot \frac{d^n-(d-x)^n}{n(d- x)^{n-1}}. \nonumber 
	\end{align}

Note $d^n-(d-x)^n\le nx d^{n-1}$, now for $x\in [0, \frac{d}{2}]$, we obtain
	\begin{align}
		\phi'(x)&\leq  2^n\mu_1\cdot x, \quad \quad \forall x\in [0, \frac{d}{2}] \label{upper bound phi' at 0 end}
	\end{align}
	
	By similar argument as above, for $x\in [\frac{d}{2}, d]$, we also have
	\begin{align}
		\phi'(x)&\leq 2^n\mu_1\phi(d)\cdot (d- x), \quad \quad \forall x\in [\frac{d}{2}, d]. \label{upper bound of phi' at d end}
	\end{align}
	
	\textbf{Step (2)}.  Note $[d- \frac{1}{\sqrt{2^n\mu_1}},  d]\subseteq [\frac{d}{2},  d]$ by (\ref{uniform upper bound of mu 1 N}).  From (\ref{upper bound of phi' at d end}),   for any $x\in [d- \frac{1}{\sqrt{2^n\mu_1}},  d]\subseteq [\frac{d}{2},  d]$;  we get
	\begin{align}
		\phi(d)- \phi(x)= \int_x^d \phi'\leq 2^n\int_{d- \frac{1}{\sqrt{2^n\mu_1}}}^d \mu_1\phi(d)(d- x)dx\leq \frac{1}{2}\phi(d).  \nonumber 
	\end{align}
	
	And we get 
	\begin{align}
		\phi(x)\geq \frac{1}{2}\phi(d)> 0,  \quad \quad \forall x\in  [d- \frac{1}{\sqrt{2^n\mu_1}},  d]\subseteq [\frac{d}{2},  d].  \label{lower bound of phi x near d end}
	\end{align}
	
	By (\ref{lower bound of phi x near d end}) and (\ref{uniform upper bound of mu 1 N}),  we get that
	\begin{align}
	x_0\leq d- \frac{1}{\sqrt{2^n\mu_1}}\leq d- \frac{1}{10^n}.  \label{upper bound of x_0}
	\end{align}
	
	On the other hand,  using (\ref{upper bound phi' at 0 end}),  similarly we have
	\begin{align}
		\phi(\frac{1}{\sqrt{2^n\mu_1}})- \phi(0)= \int_0^{\frac{1}{\sqrt{2^n\mu_1}}}\phi'(x)dx\leq \frac{1}{2}.  \nonumber 
	\end{align}
	
	Then $\phi(\frac{1}{\sqrt{2^n\mu_1}})\leq -\frac{1}{2}< 0$,  we get
	\begin{align}
		x_0\geq \frac{1}{\sqrt{2^n\mu_1}}\geq \frac{1}{10^n}. \label{lower bound of x_0}
	\end{align}
	
	By (\ref{upper bound of x_0}) and (\ref{lower bound of x_0}),  we get (\ref{bound of zero points}).
	
	\textbf{Step (3)}.  By (\ref{upper bound of x_0}) and (\ref{lower bound of phi x near d end}),  using the concave property of $h^{\frac{1}{n- 1}}$, we have
	\begin{align}
		CV(\Omega) &\geq -\int_0^{x_0} h\phi= \int_{x_0}^d h\phi\geq \int_{d- \frac{1}{10^n}}^d h\phi\geq \frac{1}{2}\phi(d)\int_{d- \frac{1}{10^n}}^d h \nonumber \\
		&\geq \frac{1}{2}\phi(d)\cdot \frac{[h^{\frac{1}{n-1}}(d)+ h^{\frac{1}{n-1}}(d- \frac{1}{10^n})]^{n-1}}{2^{n-1}}\cdot \frac{1}{2\cdot 10^n}  \nonumber \\
		&\geq CV(\Omega)\cdot \phi(d).   \nonumber 
	\end{align}
	
	Hence $\phi(d)\leq C$,  then 
	\begin{align}
		\sup_{x\in [0, d]}|\phi(x)|\leq \phi(d)+ |\phi(0)|\leq C.  \nonumber 
	\end{align}
	This implies (\ref{uniform bound of phi}).
	
	By (\ref{upper bound phi' at 0 end}),  (\ref{upper bound of phi' at d end}) and (\ref{uniform bound of phi}), we get (\ref{upper bound of phi' in x}). 
	
	\textbf{Step (4)}. Note $h^{\frac{1}{n-1}}$ is concave by Brunn-Minkowski inequality and the convex property of $\Omega$. Then for $0<t<x$, we have $h^{\frac{1}{n-1}}(t)\ge \frac{t}{x}h^{\frac{1}{n-1}}(x)+\frac{x-t}{x}h^{\frac{1}{n-1}}(0)\ge \frac{t}{x}h^{\frac{1}{n-1}}(x)$. Also from (\ref{height ODE}), using (\ref{upper bound phi' at 0 end}), for $x\in [0,  \frac{1}{10^n}]$; we have
	\begin{align}
		\phi'(x)&= -\frac{1}{h}\int_0^x\mu_1 h(t)[-1+ \int_0^t \phi'(s)ds]dt \nonumber \\
		&\geq \mu_1\int_0^x \frac{h(t)}{h(x)}dt- \frac{1}{h(x)}\int_0^x \mu_1 h\cdot [\int_0^t 2^n\mu_1 sds]dt \nonumber \\
		&\geq \mu_1\int_0^x \frac{h(t)}{h(x)}dt- 2^{n-1}\mu_1^2\int_0^x \frac{h(t)}{h(x)}\cdot t^2 dt \nonumber \\
		&= \mu_1\int_0^x (1- \mu_1t^2)\frac{h(t)}{h(x)}dt \geq \frac{1}{2}\mu_1\int_0^x (\frac{t}{x})^{n-1} dt \nonumber \\
		&\geq \mu_1\frac{x}{2n} .\nonumber
	\end{align}
	
	Then
	\begin{align}
		\phi'(x)\geq Cx, \quad \quad \quad \forall x\in [0, \frac{1}{10^n}]. \label{lower bound of phi' at 0 end}
	\end{align}
	
	Using (\ref{upper bound of phi' at d end}) as above,  for $x\in [d- \frac{1}{10^n}, d]$, we have
	\begin{align}
		\phi'(x)&= \frac{1}{h}\int_x^d\mu_1 h(t)[\phi(d)- \int_t^d \phi'(s)ds]dt\geq \mu_1\phi(d)\frac{d- x}{2n}. \label{lower bound of phi' at d end} 
	\end{align}
	
	\textbf{Step (5)}. Now for $x\in [0,\frac{d}{2}]$, from (\ref{upper bound phi' at 0 end}), we get
	\begin{align}
		\phi(x)= -1+ \int_0^x \phi'(t)dt\leq -1+ 2^{n-1}\mu_1x^2. \nonumber 
	\end{align} 
	
	Therefore note $\sqrt{\frac{1}{2^n\mu_1}}\leq \min\{\frac{d}{2}, x_0\}$, we have
		\begin{align}
			0&= \int_0^d h\phi\leq \int_0^{\sqrt{\frac{1}{2^n\mu_1}}} h\cdot \phi(x)dx+\int_{x_0}^d h\cdot \phi(d) \nonumber \\
			&\leq \int_0^{\sqrt{\frac{1}{2^n\mu_1}}} -h(x)\cdot (1- 2^{n-1}\mu_1x^2) dx+ V(\Omega)\phi(d) \nonumber \\
			&\leq -\frac{1}{2}\int_0^{\sqrt{\frac{1}{2^n\mu_1}}} h(x)dx+ V(\Omega)\phi(d) \nonumber \\
			&\leq -\frac{1}{2^{n+1}}h\Big(\sqrt{\frac{1}{2^n\mu_1}}\Big)\cdot \sqrt{\frac{1}{2^n\mu_1}}+ V(\Omega)\phi(d) \nonumber \\
			&\leq -C(n)V(\Omega)+ V(\Omega)\phi(d) .\nonumber 
	\end{align}
	
	Simplifying the above yields
	\begin{align}
		\phi(d)\geq  C(n). \label{lower bound of phi d}
	\end{align}
	
	Plugging (\ref{lower bound of phi d}) into (\ref{lower bound of phi' at d end}),  we obtain
	\begin{align}
		\phi'(x)\geq C\cdot (d- x), \quad \quad \quad \forall x\in [d- \frac{1}{10^n}, d].\label{lower bound of phi' near d end}
	\end{align}
	
	By (\ref{lower bound of phi' at 0 end}) and (\ref{lower bound of phi' near d end}),  the conclusion follows.
}
\qed		

\begin{lemma}\label{lem L2 upper bound deri of phi}
	{There is a universal constant $C> 0$ such that 
		\begin{align}
			& 	C^{-1}\cdot V(\Omega)\leq \int_0^d h\cdot (\phi)^2\leq C\cdot V(\Omega),\label{integra bound of phi} \\
			&C^{-1}\cdot V(\Omega)\leq \int_0^d h\cdot (\phi')^2\leq C\cdot V(\Omega).\label{integra bound of phi'}
		\end{align}
	}
\end{lemma}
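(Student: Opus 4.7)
The plan is to reduce both estimates to a single bound on $\int_0^d h\phi^2$ via a Rayleigh-type identity, and then to prove the reduced bound directly. Multiplying (\ref{height ODE}) by $\phi$ and integrating by parts on $[0, d]$, the boundary term $[h\phi'\phi]_0^d$ vanishes by the Neumann conditions $\phi'(0) = \phi'(d) = 0$, yielding $\int_0^d h(\phi')^2 = \mu_1(N) \int_0^d h\phi^2$. Since (\ref{uniform upper bound of mu 1 N}) places $\mu_1(N)$ in the universal range $[\pi^2/4, 25]$, it suffices to prove $c V(\Omega) \leq \int_0^d h\phi^2 \leq C V(\Omega)$. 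The upper bound is immediate from (\ref{uniform bound of phi}): $\int_0^d h\phi^2 \leq (\sup|\phi|)^2 \cdot V(\Omega) \leq C V(\Omega)$.

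For the lower bound I identify two intervals on which $\phi^2 \geq c > 0$ uniformly. Near $x = 0$, combining $\phi(0) = -1$ with the pointwise estimate $\phi'(x) \leq 2^n \mu_1 x$ from (\ref{upper bound phi' at 0 end}) gives $\phi(x) \leq -\tfrac{1}{2}$ on $[0, 1/\sqrt{2^n\mu_1}]$, which, since $\mu_1(N) \leq 25$, contains $[0, 1/10^n]$. Near $x = d$, (\ref{lower bound of phi x near d end}) combined with $\phi(d) \geq C(n)$ from (\ref{lower bound of phi d}) yields $\phi \geq C(n)/2$ on $[d - 1/10^n, d]$. Thus it suffices to show that $\int_0^{1/10^n} h + \int_{d - 1/10^n}^d h \geq c V(\Omega)$.

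The heart of the proof is this last inequality, which I obtain from the concavity of $h^{1/(n-1)}$ furnished by Brunn--Minkowski. Set $M := \max_{[0,d]} h = h(\tau_0)$, so that $V(\Omega) \leq M d \leq 2M$. Because $\mathrm{diam}(\Omega) = 2$ and $\mathscr{PW}(\Omega) = \epsilon$ with $\epsilon \leq 1/20$ force $d \geq \sqrt{4 - \epsilon^2} > 2/10^n$, at least one of $\tau_0 \geq 1/10^n$ or $\tau_0 \leq d - 1/10^n$ must hold. In the former case, concavity gives $h(x) \geq (x/\tau_0)^{n-1} M$ on $[0, \tau_0]$, hence $\int_0^{1/10^n} h \geq M (1/10^n)^n / (n \tau_0^{n-1}) \geq c_n M \geq c'_n V(\Omega)$; the latter case is symmetric. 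The main technical obstacle is this concentration estimate for $h$ near an endpoint; once the dichotomy on the location of $\tau_0$ is in place, all other ingredients have already been supplied by Lemma \ref{lem grad est of ODE} and the geometric setup.
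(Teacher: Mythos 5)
Your proof is correct and follows essentially the same route as the paper: the identity $\int_0^d h(\phi')^2=\mu_1(N)\int_0^d h\phi^2$ together with (\ref{uniform upper bound of mu 1 N}) reduces both claims to two-sided bounds on $\int_0^d h\phi^2$, the upper bound follows from (\ref{uniform bound of phi}), and the lower bound follows from $|\phi|$ being bounded below near an endpoint combined with the concavity of $h^{\frac{1}{n-1}}$, which forces $h$ to carry a definite fraction of $V(\Omega)$ near that endpoint. The only (harmless) difference is that the paper works solely on $[0,10^{-n}]$, where $\phi\le -\tfrac12$, and bounds $\int_0^{10^{-n}}h\ge c\,V(\Omega)$ unconditionally, whereas you introduce a dichotomy on the location of $\tau_0$ and, in one case, switch to the right endpoint using (\ref{lower bound of phi x near d end}) and (\ref{lower bound of phi d}).
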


\pf
{Using Lemma \ref{lem grad est of ODE},  note $h\leq \frac{V(\Omega)}{\frac{d}{n}}\leq C\cdot V(\Omega)$,  we get
	\begin{align}
		\int_0^d h\cdot (\phi)^2\leq C\cdot V(\Omega).  \nonumber 
	\end{align} 
	
	On the other hand,  for any $x\in [0,  \frac{1}{10^n}]$,  by Lemma \ref{lem grad est of ODE} we have 
	\begin{align}
		\phi(x)= \phi(0)+ \int_0^x \phi'\leq-1+ \int_0^x 2^n\mu_1tdt\leq -\frac{1}{2}.  \nonumber 
	\end{align}
	
	Now we have
	\begin{align}
			\int_0^d h\phi^2&\geq \int_0^{10^{-n}} h\phi^2\geq \frac{1}{4}\int_0^{10^{-n}} h\geq \frac{1}{8}\cdot \frac{h(\frac{1}{10^n})}{10^{n}\cdot 2^{n-1}}\geq CV(\Omega).  \nonumber 
	\end{align}
	
	By the above,  (\ref{integra bound of phi}) is obtained.
	
	Now from (\ref{integra bound of phi}) and (\ref{uniform upper bound of mu 1 N}), we get 
	\begin{align}
		\int_0^d h\cdot (\phi')^2&= \mu_1(N)\int_0^d h\phi^2\in [C_1V(\Omega), C_2V(\Omega)].  \nonumber 
	\end{align}
}
\qed

\section{The estimate of error term from Liouville's transform}

\begin{definition}\label{def intric width}
{Assume $t_0\in [0, d]$ such that 
\begin{align}
\mathrm{diam}(\Omega_{t_0})= \max_{t\in [0, d]}\mathrm{diam}(\Omega_{t_0})= \hat{\epsilon}. \nonumber 
\end{align}
}
\end{definition}

The following Lemma gives the crucial lower bound of the geometric error term for $\Omega$ with almost maximal $w$-diameter.

\begin{lemma}\label{lem major 2nd term}
	{There is a universal constant $C= C(n)> 0$, such that if $d\geq 2- 10^{-6n}\cdot \hat{\epsilon}^2$, then
		\begin{align}
			\int_0^{10^{-1}} \frac{|h'(x) |^2}{h(x) }\|x\|^2dx \geq C\cdot V(\Omega)\cdot \hat{\epsilon}^2. \nonumber 
		\end{align}
	}
\end{lemma}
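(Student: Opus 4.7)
The hypothesis $d \geq 2 - 10^{-6n}\hat{\epsilon}^2$ forces $\Omega$ to be sharply pinched near $x = 0$ and $x = d$, and my first step is to extract this as a quantitative cross-sectional diameter bound. Writing $\delta := 10^{-6n}\hat{\epsilon}^2$ and choosing $p_0, p_d \in \Omega$ with $x$-coordinates $0$ and $d$, the bound $|p_0 - p_d|^2 = d^2 + |y_{p_0} - y_{p_d}|^2 \leq 4$ together with $d \geq 2 - \delta$ gives $|y_{p_0} - y_{p_d}|^2 \leq 4\delta$. Then for any $(x, y) \in \Omega$ the diameter bound $|(x,y) - p_d|^2 \leq 4$ yields $|y - y_{p_d}|^2 \leq 4 - (d-x)^2 = (2-d+x)(2+d-x) \leq 4(x + \delta)$, so on $[0, 1/10]$
\[
\mathrm{diam}(\Omega_x) \leq 4\sqrt{x + \delta}, \qquad h(x) \leq C(n)(x + \delta)^{(n-1)/2}.
\]
Combined with $\mathrm{diam}(\Omega_{t_0}) = \hat{\epsilon}$, this forces $t_0 \geq \hat{\epsilon}^2/32$; Brunn--Minkowski concavity of $\rho := h^{1/(n-1)}$ then produces the matching lower bound $h(x) \geq (x/t_0)^{n-1} h(t_0)$ on $[0, t_0]$.

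The main estimate is a Cauchy--Schwarz through $g := \sqrt{h}$, for which $(h')^2/h = 4(g')^2$: for $0 < a < b \leq 1/10$,
\[
\int_a^b \frac{(h')^2}{h}\,x^2\,dx \;=\; 4\int_a^b (g')^2 x^2\,dx \;\geq\; \frac{4ab\,(g(b) - g(a))^2}{b - a}.
\]
I would take $b := \min(t_0, 1/10)$, so that $h(b) \geq c(n) h(t_0)$ by the lower bound above, and then choose $a$ as large as possible subject to $h(a) \leq h(b)/4$. This forces $(g(b) - g(a))^2 \geq h(b)/4$ and $ab/(b - a) \geq a$, yielding $\int \geq a\,h(b) \gtrsim a\,h(t_0)$. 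Since $V(\Omega) \leq 2\,h(\tau_0)$, matching the right-hand side of the lemma reduces to producing $a \gtrsim \hat{\epsilon}^2$.

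The main obstacle is precisely this choice of $a$. In two dimensions $h(x) = \mathrm{diam}(\Omega_x)$, so the diameter bound is exact and admits $a \sim t_0 \sim \hat{\epsilon}^2$, which closes the argument in the planar case. For $n \geq 3$ the cross-section $\Omega_{t_0}$ may be geometrically thin, making $h(t_0)$ much smaller than $\hat{\epsilon}^{n-1}$; then the crude estimate $h(a) \leq C(n)(a+\delta)^{(n-1)/2}$ only allows $a \sim h(b)^{2/(n-1)}$, giving $\int \gtrsim h(t_0)^{(n+1)/(n-1)}$, which is too weak. I anticipate closing this gap by replacing the diameter bound with a refined upper bound that factors through the coordinate widths $w_i(x)$ of $\Omega_x$: each $w_i$ is concave in $x$ and bounded by $O(\sqrt{\delta})$ at $x = 0$, so the thin directions of $\Omega_{t_0}$ automatically propagate to $\Omega_a$, giving $h(a) \lesssim (a/t_0)\cdot h(t_0)$ (plus a $\delta$-correction) uniformly in dimension. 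Substituting this refined bound into the Cauchy--Schwarz estimate permits $a \sim t_0 \sim \hat{\epsilon}^2$ independently of the geometry of $\Omega_{t_0}$, and recovers the desired factor $\hat{\epsilon}^2$.
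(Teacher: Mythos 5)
Your endgame (the weighted Cauchy--Schwarz through $g=\sqrt{h}$ comparing $h$ at two points of $[0,10^{-1}]$) is essentially the same mechanism as the paper's Step (3), and your planar argument does close: in dimension $2$ one has $h(x)=\mathrm{diam}(\Omega_x)$, so $h(a)\leq 4\sqrt{a+\delta}$ together with $h(b)\gtrsim\hat{\epsilon}\gtrsim V(\Omega)$ gives an admissible $a\sim\hat{\epsilon}^2$, exactly parallel to the paper's H\"older estimate on $[\tau_1,10^{-1}]$. But the lemma is stated for all $n$ with $C=C(n)$, and for $n\geq 3$ your proposal has a genuine gap that your ``anticipated'' fix does not repair. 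Two distinct things break. First, your final lower bound is $a\,h(b)$ with $b=\min(t_0,1/10)$, i.e. it is phased through $h(t_0)$; but in dimension $\geq 3$ the slice $\Omega_{t_0}$ realizing the maximal diameter can be nearly degenerate (a needle), so $h(t_0)$ can be arbitrarily small compared with $\max_x h(x)\geq V(\Omega)/2$, and then $a\,h(b)\gtrsim\hat{\epsilon}^2 h(t_0)$ does not reach $V(\Omega)\hat{\epsilon}^2$ no matter how you choose $a$. Second, the proposed refined bound $h(a)\lesssim (a/t_0)\,h(t_0)$ (up to a $\delta$-correction) is not a consequence of convexity: concavity of the coordinate widths and of $h^{1/(n-1)}$ gives \emph{lower} bounds at interior points, not upper bounds, and a product of coordinate widths does not control $h(t_0)$ from below. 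In fact the pointwise claim already fails for planar kite-like domains whose boundary bulges like $\sqrt{x+\delta}$ near $x=0$: there $h(a)\sim\sqrt{a+\delta}$ while $(a/t_0)h(t_0)$ is linear in $a$, so the ratio grows like $\sqrt{t_0/a}$ and is unbounded. So the higher-dimensional case is not merely ``anticipated''; the route you sketch for it would fail.

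For comparison, the paper closes the $n\geq 3$ case by a different idea at exactly this point: it fixes $a=\tau_1=10^{-6n}\hat{\epsilon}^2$ and compares $h(\tau_1)$ directly with $\max_x h(x)$ (not with $h(t_0)$). The diameter pinching gives that the width of $\Omega_{\tau_1}$ in the direction $e_1$ realizing $\mathrm{diam}(\Omega_{t_0})$ is at most $8\hat{\epsilon}/10^{3n}$, while at the midpoint $\tfrac{t_0+\tau_1}{2}$ that width is at least $\hat{\epsilon}/4$; a Brunn--Minkowski argument on the $(n-2)$-dimensional slices orthogonal to $e_1$ shows the maximal slice measure at the midpoint is at least $2^{-(n-2)}$ times that at $\tau_1$, whence $h(\tau_1)\leq 10^{-2n}\max_x h(x)$. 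Combining this with $h(10^{-1})\geq 20^{-(n-1)}\max_x h(x)\gtrsim V(\Omega)$ and the same Cauchy--Schwarz on $[\tau_1,10^{-1}]$ yields the stated bound in every dimension. If you intend your proof only for the planar case actually used in the main theorem, say so explicitly and restrict the statement; as a proof of the lemma as stated, the $n\geq 3$ step is missing.
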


\pf
{\textbf{Step (1)}. After rotation of the coordinates $\{y_i\}_{i= 1}^{n- 1}$ perpendicular to $x$-axix, we can get 
\begin{align}
\epsilon_1(t_0)\geq \frac{1}{2}\hat{\ep}, \nonumber 
\end{align}
where $\epsilon_1(\cdot)$ is defined as follows:
\begin{align}
\epsilon_1(x)\vcentcolon= \sup_{(x,y_1,...,y_{n-1}), (x,z_1,...,z_{n-1})\in \Omega_x}|y_1- z_1|. \nonumber 
\end{align}

Let $\tau_1= 10^{-6n}\hat{\epsilon}^2$. By $\Omega$ is convex, we get
\begin{align}
\epsilon_1(\frac{t_0+ \tau_1}{2})\geq \frac{1}{2}\{0+ \frac{1}{2}\mathrm{diam}(\Omega_{t_0})\}\geq \frac{\hat{\epsilon}}{4}. \label{epsilon 1 lower bound}
\end{align}	
	
Since $\mathrm{diam}(\Omega)=2$, we get
\begin{align}
		[\frac{1}{2}\epsilon_1(\tau_1)]^2&\leq 4- (d-\tau_1)^2\leq (2- d+ \tau_1)(2+ d- \tau_1) \leq 8\tau_1 . \nonumber 
\end{align}
Then we get
\begin{align}
	\epsilon_1(\tau_1)\leq  \frac{8\hat{\epsilon}}{10^{3n}}. \label{h epsi 1.5 is small epi-general}
\end{align}

\textbf{Step (2)}. For $s\in \mathbb{R}$ and $e\in \mathbb{R}^{n- 1}$, we define
\begin{align}
P(e, s)= \{y\in \mathbb{R}^{n- 1}: (y- s\cdot e)\cdot e= 0\}. \nonumber 
\end{align}

There is $s_1\in \mathbb{R}$ such that
\begin{align}
\mathcal{H}^{n- 2}(P(e_1, s_1)\cap \check{\Omega}_{\tau_1})= \max_{s\in \mathbb{R}}\mathcal{H}^{n- 2}(P(e_1, s)\cap \check{\Omega}_{\tau_1}). \nonumber 
\end{align}


Note $\frac{1}{2}[P(e_1, s_1)\cap \check{\Omega}_{t_0}+ P(e_1, s_1)\cap \check{\Omega}_{\tau_1}]\subseteq [P(e_1, s_1)\cap \check{\Omega}_{\frac{t_0+ \tau_1}{2}}]$ by $\Omega$ is convex. Because $\frac{1}{2}[P(e_1, s_1)\cap \check{\Omega}_{t_0}]), \frac{1}{2}[P(e_1, s_1)\cap \check{\Omega}_{\tau_1}]$ are convex subsets of $\mathbb{R}^{n- 1}$, from Brunn-Minkowski's inequality, we get
\begin{align}
&\quad \mathcal{H}^{n- 2}(P(e_1, s_1)\cap \check{\Omega}_{\frac{t_0+ \tau_1}{2}})\geq \mathcal{H}^{n- 2}(\frac{1}{2}[P(e_1, s_1)\cap \check{\Omega}_{t_0}+ P(e_1, s_1)\cap \check{\Omega}_{\tau_1}])\nonumber\\
&\geq \Big\{\Big(\mathcal{H}^{n- 2}(\frac{1}{2}[P(e_1, s_1)\cap \check{\Omega}_{t_0}])\Big)^{\frac{1}{n- 2}}+ \Big(\mathcal{H}^{n- 2}(\frac{1}{2}[P(e_1, s_1)\cap \check{\Omega}_{\tau_1}])\Big)^{\frac{1}{n- 2}}\Big\}^{n- 2}\nonumber\\
&\geq \frac{1}{2^{n- 2}}\mathcal{H}^{n- 2}(P(e_1, s_1)\cap \check{\Omega}_{\tau_1}). \label{n-2 dim measure lower bound} 
\end{align}

Note that $\check{\Omega}_{\frac{t_0+ \tau_1}{2}}$ is convex, hence by (\ref{epsilon 1 lower bound}) and (\ref{n-2 dim measure lower bound}), we get
\begin{align}
&\quad \max_{x\in [0, d]}h(x)\geq \mathcal{H}^{n- 1}(\check{\Omega}_{\frac{t_0+ \tau_1}{2}})\nonumber \\
&\geq \frac{1}{2}\epsilon_1(\frac{t_0+ \tau_1}{2})\cdot \max_{s\in \mathbb{R}} \mathcal{H}^{n- 2}(P(e_1, s)\cap \check{\Omega}_{\frac{t_0+ \tau_1}{2}}) \nonumber \\
&\geq \frac{\hat{\epsilon}}{8}\cdot \mathcal{H}^{n- 2}(P(e_1, s_1)\cap \check{\Omega}_{\frac{t_0+ \tau_1}{2}})\geq \frac{\hat{\epsilon}}{2^{n+ 1}}\mathcal{H}^{n- 2}(P(e_1, s_1)\cap \check{\Omega}_{\tau_1}) \nonumber \\
&\geq 10^{2n} \epsilon_1(\tau_1)\cdot \max_{s\in \mathbb{R}}\mathcal{H}^{n- 2}(P(e_1, s)\cap \check{\Omega}_{\tau_1})\nonumber \\
&\geq 10^{2n}\mathcal{H}^{n- 1}(\check{\Omega}_{\tau_1})= 10^{2n}\cdot h(\tau_1). \label{upper bound h at small x}
\end{align}

\textbf{Step (3)}. Using the concavity of $h^{\frac{1}{n-1}}$, we have 
\begin{align}
		h^{\frac{1}{n-1}}(\frac{1}{10})\geq \frac{1}{20}\max_{x\in [0, d]}h^{\frac{1}{n-1}}(x).\label{h d over 2-general}
\end{align}
	
	Using (\ref{h d over 2-general}), (\ref{upper bound h at small x}) and H\"older inequality, we obtain
	\begin{align}
		&\int_{\tau_1}^{\frac{1}{10}}\frac{|h'(x) |^2}{h(x) }\|x\|^2dx\geq   \frac{(\int_{\tau_1}^{\frac{1}{10}}|h\cdot h'|dx)^2}{\int_{\tau_1}^{\frac{1}{10}}\frac{h^3}{x^2}dx} 
		\geq \frac{|h^2(\frac{1}{10})- h^2(\tau_1)|^2}{\max\limits_{x\in [0, d]}h^3(x)\cdot \int_{\tau_1}^{\frac{1}{10}}\frac{1}{x^2}dx}
	\nonumber \\
		&\geq C\frac{\max_{x\in [0, d]}h(x)}{\frac{1}{\tau_1}} \geq C\hat{\epsilon}^2\max_{x\in [0, d]}h(x)\geq CV(\Omega)\cdot \hat{\epsilon}^2. \nonumber 
	\end{align}
}
\qed

\part{The general convex domain}

\section{The estimate of error terms between PDE and ODE}\label{sec est of error}

For $x\in [0, d]$, define $\bar{u}(x)=  \frac{1}{h(x)}\int_{\Omega_x} u(x, y)dy$. Furthermore we define the \textbf{modified cross-sectional average} $\tilde{u}$ and the \textbf{error term} $\eta$ as follows:
\begin{align}
	\hat{u}&=\begin{cases}
		\bar{u}(\epsilon), \quad x\in [0,\epsilon]\\
		\bar{u}(x), \quad x\in [\epsilon,d-\epsilon]\\
		\bar{u}(d-\epsilon), \quad x\in [d-\epsilon,d]\\
	\end{cases}\nonumber \\
c_1&=\frac{\int_{0}^d h\hat{u}}{\int_{0}^d h},\quad \tilde{u}= \hat{u}- \frac{\int_{0}^d h\hat{u}}{\int_{0}^d h},\label{def of tilde u} \\
\eta(x)&= h\tilde{u}'(x)+ \mu_1(\Omega)\int_0^x h\tilde{u}ds. \label{def of eta}
\end{align}

\begin{remark}\label{rem def of hat u}
{The definition of $\hat{u}$ near the end points of $[0, d]$, is similar to  \cite[Page $5134$, (iii) and (iv)]{CJK}. The definition of $\hat{u}$ can be viewed as the combination of the definition of cross-sectional average $\bar{u}$ defined in \cite{Zheng} and discrete cross-sectional average $\tilde{\phi}$ defined in \cite[Section $5$]{CJK}.
}
\end{remark}

For $s\in \mathbb{R}$, we define
\begin{align}
\check{\Omega}_s\vcentcolon= \{y\in \mathbb{R}^{n- 1}: (s, y)\in \Omega_s\}. \nonumber 
\end{align}

A similar result is showed in \cite[Lemma $6$]{Zheng}, but the definition of $\eta$ here is a little different from \cite{Zheng}. 
\begin{lemma}\label{lem rough est of eta for general dim}
{There is a universal constant $C> 0$, such that for any $x\in [\epsilon, d- \epsilon]$,
\begin{align}
&|\eta|(x)\leq Ch(x)\cdot \epsilon , \quad \quad |\tilde{u}'- \frac{1}{h}\int_{\Omega_x}u_x|\leq C\epsilon. \nonumber  
\end{align}
}
\end{lemma}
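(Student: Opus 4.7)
The plan is to derive a differential-type identity that exhibits $\eta$ as a boundary error term coming from the variation of the cross-section $\check\Omega_x$ with $x$, and then bound it using the refined gradient estimate together with the Brunn--Minkowski concavity of $h^{1/(n-1)}$.

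First I would integrate the eigenvalue equation $\Delta u = -\mu_1(\Omega)u$ over the sub-region $\Omega\cap\{t\leq x\}$. Applying the divergence theorem and using the Neumann condition $\partial_\nu u = 0$ on $\partial\Omega$, only the flat slice $\check\Omega_x$ contributes, producing
\begin{align*}
\int_{\check\Omega_x} u_x(x, y)\, dy = -\mu_1(\Omega)\int_0^x h(t)\bar u(t)\, dt.
\end{align*}
Writing the outward unit normal to $\partial\Omega$ at a boundary point as $(\nu_1, \nu')\in\mathbb{R}\times\mathbb{R}^{n-1}$, a direct moving-domain computation for $U(x) := h(x)\bar u(x) = \int_{\check\Omega_x} u\, dy$ gives
\begin{align*}
(h\bar u)'(x) = \int_{\check\Omega_x} u_x\, dy - \int_{\partial\check\Omega_x} u\cdot\frac{\nu_1}{|\nu'|}\, d\mathcal{H}^{n-2},
\end{align*}
and applying the same formula to the constant function $1$ yields $h'(x) = -\int_{\partial\check\Omega_x}\nu_1/|\nu'|\, d\mathcal{H}^{n-2}$. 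Combining the three identities and rearranging,
\begin{align*}
h\bar u'(x) + \mu_1(\Omega)\int_0^x h\bar u\, dt = -\int_{\partial\check\Omega_x}\bigl(u(x, y) - \bar u(x)\bigr)\frac{\nu_1}{|\nu'|}\, d\mathcal{H}^{n-2}.
\end{align*}

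Since $\hat u \equiv \bar u$ on $[\epsilon, d-\epsilon]$, the difference between $\eta(x)$ and the left-hand side above reduces to the cutoff errors $\mu_1(\Omega)\int_0^\epsilon h(\bar u(\epsilon) - \bar u)\, dt$ and $-\mu_1(\Omega)c_1\int_0^x h\, dt$. Lemma \ref{lem grad est without k} gives $|u + k|\leq C\epsilon^2$ on $[0,\epsilon]$ (and analogously near $d$), which implies $|\bar u(\epsilon) - \bar u(t)|\leq C\epsilon^2$ there; together with $\int_\Omega u = 0$ this also forces $|c_1| = O(\epsilon^3)$, so both error contributions are of strictly lower order. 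For the main boundary integral, the gradient estimate $|Du|\leq C\max\{\epsilon, \|x\|\}$ from Proposition \ref{prop directional deri est-any dim} combined with $\mathrm{diam}(\check\Omega_x)\leq \mathscr{PW}(\Omega) = \epsilon$ yields $|u(x, y) - \bar u(x)|\leq C\epsilon\|x\|$ for $y\in\check\Omega_x$ when $\|x\|\geq \epsilon$. Finally, the Brunn--Minkowski concavity of $h^{1/(n-1)}$ with $h(0) = h(d) = 0$ implies $|h'(x)|\leq (n-1)h(x)/\|x\|$, and so
\begin{align*}
|\eta(x)|\leq C\epsilon\|x\|\cdot|h'(x)|\leq C\epsilon\, h(x).
\end{align*}
The second inequality follows by dividing the identity $h\bar u' - \int_{\check\Omega_x}u_x = -\int_{\partial\check\Omega_x}(u - \bar u)\nu_1/|\nu'|\, d\mathcal{H}^{n-2}$ by $h(x)$ and applying the same two estimates.

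The main obstacle is the geometric bookkeeping: first, justifying the moving-domain formula on a convex but possibly non-smooth $\partial\Omega$, and second --- more substantively --- converting a bound proportional to $|h'(x)|$ into the desired bound proportional to $h(x)$. This conversion relies crucially on the concavity estimate $|h'|\leq (n-1)h/\|x\|$, and is exactly what forces the restriction $x\in [\epsilon, d-\epsilon]$, since near the endpoints $|h'|/h$ blows up. The cutoff modification defining $\hat u$ is designed precisely so that the low-order error terms coming from the endpoint regions, where the concavity estimate degenerates, are absorbed into lower-order corrections.
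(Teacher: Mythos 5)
Your overall architecture (the identity $\int_{\check\Omega_x}u_x=-\mu_1(\Omega)\int_0^x h\bar u$, a transport formula for $\frac{d}{dx}\int_{\check\Omega_x}u$, the cutoff errors of size $O(\epsilon^3)$, and the concavity bound $|h'|\leq (n-1)h/\|x\|$) is sound, but there is a genuine gap at the decisive step. From your boundary identity the honest estimate is
\begin{align}
\Big|\int_{\partial\check\Omega_x}(u-\bar u)\frac{\nu_1}{|\nu'|}\,d\mathcal{H}^{n-2}\Big|\leq \sup_{\check\Omega_x}|u-\bar u|\cdot \int_{\partial\check\Omega_x}\frac{|\nu_1|}{|\nu'|}\,d\mathcal{H}^{n-2},
\end{align}
and the \emph{unsigned} integral $\int_{\partial\check\Omega_x}|\nu_1|/|\nu'|$ is not $|h'(x)|$; only the signed integral equals $-h'(x)$. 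Replacing it by $|h'(x)|$ is exactly where cancellation is silently assumed. For $n\geq 3$ the two quantities can differ dramatically: for a thin sheared slab whose slices $[vx,vx+\delta]\times[0,\epsilon]$ translate at speed $v$, one has $h'\equiv 0$ while $\int|\nu_1|/|\nu'|\approx 2v\epsilon$, which also exceeds $Ch/\|x\|=C\delta\epsilon/\|x\|$ when $\delta\ll v$; so your chain $|\eta|\leq C\epsilon\|x\||h'|\leq C\epsilon h$ does not follow from the stated ingredients. This lemma sits in the general-dimension part of the paper (it feeds into Proposition \ref{prop C0-1 close between ODE and PDE solution} and Lemma \ref{lem integ est of Dy u}), so one cannot quietly restrict to $n=2$; and even in the plane the identity you would need, $|h'|=|h_+'|+|h_-'|$, is precisely Lemma \ref{lem pj-width is equal to width}, which is proved later, only for planar domains, and only after the axis is chosen in the width direction. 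In fact your argument is essentially the paper's \emph{planar, improved} estimate (Lemma \ref{lem improved est of eta-2dim}), which does use the boundary-term formula together with that planar identity — it is not the mechanism of the general-dimension rough estimate.

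The paper's proof of this lemma avoids the unsigned boundary integral altogether: it compares $\bar u(x_1)$ with $\bar u(x)$ through the scaling map $T(y)=y_0+\frac{x}{x_1}(y-y_0)$ toward a cone point $(0,y_0)\in\partial\Omega$, so that $T\check\Omega_{x_1}\subseteq\check\Omega_x$. This produces (i) the term $\frac1h\int_{\check\Omega_x}D_yu\cdot\frac{z-y_0}{x}\,dz$, bounded by $C\epsilon$ since $|D_yu|\leq C\|x\|$ and $|z-y_0|\leq\epsilon$, and (ii) a \emph{one-sided} set-difference term whose volume derivative is $\frac{n-1}{x}h-h'$, controlled by $Ch/\|x\|$ via the same concavity you use; no two-sided boundary speed ever appears. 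If you want to salvage your route, you must either prove a cancellation statement (pairing boundary points along the direction of motion and using that $u-\bar u$ oscillates by only $O(|D_yu|\cdot\epsilon)$ across the slice), or restrict to $n=2$ and import Lemma \ref{lem pj-width is equal to width} — neither of which is in your write-up as it stands.
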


\pf
{\textbf{Step (1)}.  In the rest argument, we assume $\mu_1= \mu_1(\Omega)$. Assume $(x, y)\in [0, d]\times \mathbb{R}^{n- 1}$. Now 
\begin{align}
\eta(x)= h\tilde{u}'+ \mu_1\int_0^x h\tilde{u}. \nonumber 
\end{align}

For $x\in [\epsilon, d- \epsilon]$, we have 
\begin{align}
\tilde{u}'(x)= \bar{u}'(x)= \lim_{x_1\rightarrow x^+} \frac{\bar{u}(x_1)- \bar{u}(x)}{x_1- x}. \label{deri of bar u firstly}
\end{align}

Next assume $(0, y_0)\in \partial\Omega$ and $x_1> x$, we define $T: \check{\Omega}_{x_1}\rightarrow \check{\Omega}_{x_0}$ as follows
\begin{align}
T(y)= y_0+ \frac{x}{x_1}(y- y_0).\nonumber 
\end{align}
Note we have $T\hat{\Omega}_{x_1}\subseteq \check{\Omega}_{x}$ and $\mathrm{Jac}(T)= (\frac{x}{x_1})^{n- 1}$. 

Direct computation yields
\begin{align}
&\frac{\bar{u}(x_1)- \bar{u}(x)}{x_1- x}= \frac{1}{h(x_1)}\int_{\Omega_{x_1}} \frac{u(x_1, y)- \bar{u}(x)}{x_1- x}dy \nonumber \\
&= \frac{\mathrm{Jac}(T^{-1})}{h(x_1)}\int_{T\check{\Omega}_{x_1}} \frac{u(x_1, T^{-1}z)- \bar{u}(x)}{x_1- x}dz \nonumber \\
&= \frac{\mathrm{Jac}(T^{-1})}{h(x_1)}\Big\{\int_{T\check{\Omega}_{x_1}} \frac{u(x_1, T^{-1}z)- u(x, z)}{x_1- x}dz+ \int_{T\check{\Omega}_{x_1}} \frac{u(x, z)- \bar{u}(x)}{x_1- x}dz\Big\}. \nonumber 
\end{align}

Define 
\begin{align}
(I)\vcentcolon&=  \frac{\mathrm{Jac}(T^{-1})}{h(x_1)}\int_{T\check{\Omega}_{x_1}} \frac{u(x_1, T^{-1}z)- u(x, z)}{x_1- x}dz \nonumber \\
(II)\vcentcolon&= \frac{\mathrm{Jac}(T^{-1})}{h(x_1)}\int_{T\check{\Omega}_{x_1}} \frac{u(x, z)- \bar{u}(x)}{x_1- x}dz. \nonumber 
\end{align}

\textbf{Step (2)}. Note $T^{-1}(z)= y_0+ \frac{x_1}{x}(z- y_0)$. We firstly have
\begin{align}
&\lim_{x_1\rightarrow x^+} (I)= \frac{1}{h(x)}\int_{\check{\Omega}_{x}} \lim_{x_1\rightarrow x^+}\frac{u(x_1, T^{-1}z)- u(x, z)}{x_1- x}dz \nonumber \\
&= \frac{1}{h(x)}\int_{\check{\Omega}_{x}} u_x(x, z)+ (D_yu\cdot \frac{z- y_0}{x}) dz. \label{first est of I}
\end{align}

Next by $\int_{\check{\Omega}_{x}}[u(x, z)- \bar{u}(x)]dz= 0$,  we obtain
\begin{align}
\int_{T\check{\Omega}_{x_1}} [u(x, z)- \bar{u}(x)]dz= -\int_{\check{\Omega}_{x}- T\check{\Omega}_{x_1}}[u(x, z)- \bar{u}(x)]dz. \nonumber 
\end{align}

Therefore from Proposition \ref{prop directional deri est-any dim}, we have
\begin{align}
&\lim_{x_1\rightarrow x^+} |(II)|= \lim_{x_1\rightarrow x^+} |\frac{\frac{\mathrm{Jac}(T^{-1})}{h(x_1)}\int_{T\check{\Omega}_{x_1}} [u(x, z)- \bar{u}(x)]dz}{x_1- x} |\nonumber \\
&= \lim_{x_1\rightarrow x^+} |\frac{-(\frac{\mathrm{Jac}(T^{-1})}{h(x_1)}) \int_{\check{\Omega}_x- T\check{\Omega}_{x_1}} [u(x, z)- \bar{u}(x)]dz}{x_1- x} |\nonumber \\
&\leq \sup_{y\in \check{\Omega}_x}|D_yu|\cdot \epsilon\cdot \frac{1}{h(x)} \lim_{x_1\rightarrow x^+} |\frac{ V(\check{\Omega}_x- T\check{\Omega}_{x_1})}{x_1- x} |  \nonumber \\
&\leq C\epsilon\frac{\|x\|}{h(x)} \lim_{x_1\rightarrow x^+} |\frac{ h(x)- \frac{x^{n- 1}}{x_1^{n- 1}}h(x_1)}{x_1- x} | \nonumber \\
&\leq C\epsilon\frac{\|x\|\cdot x^{n- 1}}{h(x)} |(\frac{h(x)}{x^{n- 1}})'| \nonumber \\
&\leq C\epsilon\frac{\|x\|\cdot x^{n- 1}}{h(x)} \Big\{\frac{h'}{x^{n- 1}}+ \frac{h}{x^n}\Big\} \leq C\epsilon. \label{est of II}
\end{align}

\textbf{Step (3)}. Using $\Delta u= -\mu_1 u$, we get 
\begin{align}
\mu_1\cdot \int_0^x h\bar{u}ds= -\int_{\Omega_x}u_x(x, y)dy. \nonumber 
\end{align}

By (\ref{deri of bar u firstly}) we get
\begin{align}
\eta(x)&= h\tilde{u}'(x)+ \mu_1\int_0^x h\tilde{u}dt \nonumber \\
&= h\cdot \Big\{\frac{1}{h}\int_{\check{\Omega}_{x}} (D_yu\cdot \frac{z- y_0}{x}) dz+ \lim_{x_1\rightarrow x^+}(II)\Big\}+ \mu_1\int_0^x h[\tilde{u}(t)- \bar{u}(t)]dt .\label{general eta formula}
\end{align}

In the rest argument, we assume $x\leq \tau_0$. If $x\geq \tau_0$, note $\int_0^d \tilde{u}hdt= \int_0^d \bar{u}h dt= 0$, we have
\begin{align}
\eta(x)= h\cdot \Big\{\frac{1}{h}\int_{\check{\Omega}_{x}} (D_yu\cdot \frac{z- y_0}{x}) dz+ \lim_{x_1\rightarrow x^+}(II)\Big\}+ \mu_1\int_x^d h[\bar{u}(t)- \tilde{u}(t)]dt .\label{general eta formula-the right hand}
\end{align}
Similar argument as $x\leq \tau_0$ applies on (\ref{general eta formula-the right hand}).

Recall $c_1$ is defined in (\ref{def of tilde u}). Note that $\int_{0}^d h\bar{u}=0$, from Lemma \ref{lem grad est without k}, we have
\begin{align}
c_1=\frac{|\int_{0}^{\epsilon}h(x)(\bar{u}(x)-\bar{u}(\epsilon))dx+\int_{d-\epsilon}^{d}h(x)(\bar{u}(x)-\bar{u}(d-\epsilon))dx|}{\int_{0}^d h}\le C\epsilon^3. \label{est of c1}
\end{align}

Now by (\ref{est of c1}) and Lemma \ref{lem grad est without k}, note $x\in [\epsilon, d- \epsilon]$, we get 
\begin{align}
&\quad |\mu_1\int_0^x h[\tilde{u}(t)- \bar{u}(t)]dt| \leq C\cdot \int_0^x h|\hat{u}- \bar{u}|+ C\int_0^x h\cdot c_1 \nonumber \\
&\leq C\epsilon^2\int_{[0, \epsilon]} h+ 
C\epsilon^3\int_0^x h\leq Ch(x)\cdot \epsilon^3. \label{extra term in eta}
\end{align}

From (\ref{est of II}) and Proposition \ref{prop directional deri est-any dim}, for $x\in [\epsilon, d- \epsilon]$ we get
\begin{align}
\Big|h\cdot \Big\{\frac{1}{h}\int_{\check{\Omega}_{x}} (D_yu\cdot \frac{z- y_0}{x}) dz+ \lim_{x_1\rightarrow x^+}(II)\Big\}\Big|
&\leq Ch\epsilon.\nonumber 
\end{align}

By (\ref{general eta formula}) and the above we obtain the conclusion.
}
\qed

\section{The difference between eigenfunctions}\label{sec comp of eigen and cross}

In the rest of the paper,  unless otherwise mentioned,  the function $\zeta$ is the first Neumann eigenfunction of $N= ([0, d],  hdx)$ with respect to the first Neumann eigenvalue $\mu_1(N)$ and $\zeta(0)= \tilde{u}(0)$.

\begin{lemma}\label{lem grad est of ODE-zeta}
	{There is a universal constant $C= C(n)> 0$,  assume $\zeta(x_0)= 0$,  then 
		\begin{align}
			&\zeta'(x)\geq C^{-1}\cdot \|x\|,  \quad \quad \quad \forall \|x\|\in [0,  10^{-n}],  \nonumber \\
			& x_0\in [\frac{1}{10^n},  d- \frac{1}{10^n}],  \label{bound of zero points-zeta} \\
			&\sup_{x\in [0, d]}|\zeta(x)|\leq C,  \label{uniform bound of zeta}\\
			&0\leq \zeta'(x)\leq C\cdot \|x\|, \quad \quad \forall x\in [0, d]. \label{upper bound of zeta' in x}\\
			& 	C^{-1}\cdot V(\Omega)\leq \int_0^d h\cdot (\zeta)^2\leq C\cdot V(\Omega),\label{integra bound of zeta} \\
			&C^{-1}\cdot V(\Omega)\leq \int_0^d h\cdot (\zeta')^2\leq C\cdot V(\Omega).\label{integra bound of zeta'}
		\end{align}
	}
\end{lemma}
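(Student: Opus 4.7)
The strategy is to reduce everything to the corresponding bounds already proved for $\phi$ in Lemma \ref{lem grad est of ODE} and Lemma \ref{lem L2 upper bound deri of phi}. Since the weighted Neumann eigenfunction on $N = ([0,d], hdx)$ is unique up to a scalar, and $\phi(0) = -1$, we have the identity
\begin{align}
\zeta(x) = -\tilde{u}(0)\cdot \phi(x), \qquad x\in [0,d]. \nonumber
\end{align}
Consequently every estimate in the statement follows from the corresponding estimate for $\phi$ once we show that $-\tilde{u}(0)$ is bounded above and below by universal positive constants.

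The first (and essentially only) step is therefore to pin down $\tilde{u}(0)$. Recall $\tilde{u}(0) = \hat{u}(0) - c_1 = \bar{u}(\epsilon) - c_1$. By Lemma \ref{lem grad est without k}, $\sup_{x\in[0,\epsilon]} u(x,y)\le -k + 4C_1^2\epsilon^2$, hence $\bar{u}(\epsilon)\le -k+ 4C_1^2\epsilon^2$, and from $u\ge -k$ we also have $\bar{u}(\epsilon)\ge -k$. Combining with the assumption $0\le 1-k<\epsilon$ and the bound $|c_1|\le C\epsilon^3$ obtained in \eqref{est of c1}, we conclude, for $\epsilon$ smaller than a universal constant, that
\begin{align}
-\tilde{u}(0)\in \Big[\tfrac{1}{2},\, 2\Big]. \nonumber
\end{align}

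Given this two-sided bound, the identity $\zeta=-\tilde{u}(0)\phi$ transfers every pointwise and integral estimate in Lemma \ref{lem grad est of ODE} and Lemma \ref{lem L2 upper bound deri of phi} to $\zeta$, with the universal constant $C$ changed by at most a factor of $4$. Concretely, \eqref{bound of zero points}--\eqref{upper bound of phi' in x} immediately yield \eqref{bound of zero points-zeta}--\eqref{upper bound of zeta' in x} (the location of the zero $x_0$ is unchanged, since multiplying by a positive constant does not move zeros), and \eqref{integra bound of phi}--\eqref{integra bound of phi'} yield \eqref{integra bound of zeta}--\eqref{integra bound of zeta'}.

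The only potential difficulty is making sure the two-sided control on $\tilde{u}(0)$ is genuinely universal; this relies on having already invoked the assumption $\mu_1(\Omega)\le \frac{\pi^2}{4}+1$ together with $1-k<\epsilon$ (so $-k$ is close to $-1$) and the smallness of $\epsilon$, all of which are standing assumptions by the time this lemma is invoked. Once that calibration is in place, the rest of the proof is a one-line rescaling argument and requires no new ODE analysis.
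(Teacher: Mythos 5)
Your proposal is correct and follows essentially the same route as the paper: the paper's proof is exactly the identity $\zeta=-\tilde{u}(0)\cdot\phi$ combined with Lemma \ref{lem grad est of ODE} and Lemma \ref{lem L2 upper bound deri of phi}, using that $|\tilde{u}(0)|$ is universally bounded above and below (the paper writes $|\tilde{u}(0)|=k\geq 1-\epsilon$). Your calibration of $-\tilde{u}(0)\in[\tfrac12,2]$ via Lemma \ref{lem grad est without k}, the assumption $1-k<\epsilon$, and the bound \eqref{est of c1} is, if anything, a more careful version of the same step, so no further comment is needed.
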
	

\pf
{It follows from Theorem \ref{thm existence of phi to ODE},  we get that $\zeta= -\tilde{u}(0)\cdot \phi$,  note $|\tilde{u}(0)|= k\geq 1- \epsilon$,  the conclusion follows from Lemma \ref{lem grad est of ODE} and Lemma \ref{lem L2 upper bound deri of phi}.  
}
\qed

\begin{lemma}\label{lem mu1 equ with bar u}
{We have the following formula:
\begin{align}
	\mu_1(\Omega)= \frac{\int_0^d h(\tilde{u}')^2}{\int_0^d h\tilde{u}^2}- \frac{\int_0^d \eta \tilde{u}'}{\int_0^d h\tilde{u}^2}\geq \mu_1(N)- \frac{\int_0^d \eta \tilde{u}'}{\int_0^d h\tilde{u}^2}. \label{mu1 equ}
\end{align}
}
\end{lemma}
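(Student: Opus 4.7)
The plan is to obtain the displayed equality by a single integration by parts, using the very definition of $\eta$ in \eqref{def of eta}, and then deduce the inequality from the variational characterization of $\mu_1(N)$ in Definition \ref{def mu1 N}.

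First, I would rewrite \eqref{def of eta} as $h\tilde{u}'(x) = \eta(x) - \mu_1(\Omega)\int_0^x h\tilde{u}\,ds$, multiply both sides by $\tilde{u}'(x)$, and integrate over $[0,d]$ to obtain
\begin{align}
\int_0^d h(\tilde{u}')^2\,dx = \int_0^d \eta\,\tilde{u}'\,dx - \mu_1(\Omega)\int_0^d \tilde{u}'(x)\Big(\int_0^x h(s)\tilde{u}(s)\,ds\Big)dx. \nonumber
\end{align}
The key move is to integrate the last term by parts. The boundary contributions vanish: at $x=0$ the inner integral is $0$, while at $x=d$ one has $\int_0^d h\tilde{u}\,ds = 0$, which is precisely the reason the constant $c_1$ was subtracted in \eqref{def of tilde u}. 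Hence
\begin{align}
\int_0^d \tilde{u}'(x)\Big(\int_0^x h\tilde{u}\,ds\Big)dx = -\int_0^d h\tilde{u}^2\,dx, \nonumber
\end{align}
so that $\int_0^d h(\tilde{u}')^2 = \int_0^d \eta\tilde{u}' + \mu_1(\Omega)\int_0^d h\tilde{u}^2$. Dividing by $\int_0^d h\tilde{u}^2$ yields the equality in \eqref{mu1 equ}.

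For the inequality, I would check that $\tilde{u}$ is a legitimate test function in Definition \ref{def mu1 N}. By construction $\tilde{u}\in W^{1,2}(0,d)$ (it coincides with the smooth function $\bar{u}$ on $[\epsilon,d-\epsilon]$ and is constant on each endpoint interval, gluing continuously) and satisfies $\int_0^d h\tilde{u} = 0$ by choice of $c_1$, so $\tilde{u}\in H$. Moreover $\tilde{u}\not\equiv 0$: Lemma \ref{lem grad est without k} gives $\bar{u}(\epsilon)\le -k+4C_1^2\epsilon^2 < 0 < 1 - 4C_1^2\epsilon^2 \le \bar{u}(d-\epsilon)$, so $\hat{u}$ is non-constant and hence $\tilde{u}\neq 0$. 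Therefore $\mathscr{R}(\tilde{u})\ge \mu_1(N)$, which combined with the equality gives the inequality.

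There is no substantive obstacle: the argument is essentially the standard identity one gets from testing the ODE $-(h\zeta')' = \mu_1(N) h\zeta$ against itself, transferred to the pair $(\tilde{u},\eta)$ via the definition of $\eta$. The only points requiring a word of care are the vanishing of the boundary term at $x=d$ (which is guaranteed by the subtraction of $c_1$) and the admissibility of $\tilde{u}$ as a test function in $H$ (which follows from the construction of $\hat{u}$ on the endpoint intervals).
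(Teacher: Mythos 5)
Your proof is correct and follows exactly the paper's argument: multiply the defining identity $h\tilde{u}'=\eta-\mu_1(\Omega)\int_0^x h\tilde{u}$ by $\tilde{u}'$, integrate over $[0,d]$, integrate by parts using $\int_0^d h\tilde{u}=0$ (so the boundary terms vanish), and then apply the Rayleigh-quotient characterization of $\mu_1(N)$ to $\tilde{u}\in H\setminus\{0\}$. The extra checks you include (admissibility of $\tilde{u}$ and $\tilde{u}\not\equiv 0$ via Lemma \ref{lem grad est without k}) are harmless refinements of details the paper leaves implicit.
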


\pf
{From (\ref{def of eta}),  we get 
\begin{align}
	\int_0^d h(\tilde{u}')^2= \int_0^d \eta \tilde{u}'- \int_0^d \Big\{\tilde{u}'(x)\cdot \int_0^x \mu_1(\Omega) h\tilde{u}\Big\}dx.  \nonumber 
\end{align}

Using $\int_0^d h\tilde{u}= 0$ and integration by parts,  from Definition \ref{def mu1 N} we have 
\begin{align}
\mu_1(\Omega)= \frac{\int_0^d h(\tilde{u}')^2}{\int_0^d h\tilde{u}^2}- \frac{\int_0^d \eta \tilde{u}'}{\int_0^d h\tilde{u}^2}\geq \mu_1(N)- \frac{\int_0^d \eta \tilde{u}'}{\int_0^d h\tilde{u}^2}. \nonumber 
\end{align}
}
\qed

\begin{prop}\label{prop C0-1 close between ODE and PDE solution}
{There is a universal constant $C> 0$, such that
\begin{align}
\sup_{x\in [0, d]}|\tilde{u}- \zeta|+ \sup_{x\in [\epsilon, d-\epsilon]} |\tilde{u}'- \zeta'|\leq C\cdot \epsilon.  \nonumber 
\end{align}
}
\end{prop}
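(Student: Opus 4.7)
The plan is to study $w := \tilde{u} - \zeta$ by comparing the ODE (\ref{height ODE}) satisfied by $\zeta$ with the perturbed integro-differential identity that $\tilde{u}$ satisfies through the definition (\ref{def of eta}) of $\eta$. Setting $A := \mu_1(\Omega) - \mu_1(N)$ and integrating both equations from $0$, using that $\zeta'(0) = 0$ is the Neumann condition and $\tilde{u}'(0) = 0$ because $\tilde{u}$ is constant on $[0,\epsilon]$, subtraction yields
\begin{equation}\label{plan-eq-w}
h(x) w'(x) = \eta(x) - \mu_1(\Omega)\int_0^x h w\, ds - A\int_0^x h\zeta\, ds,
\end{equation}
which has the equivalent dual form $h w' = \eta + \mu_1(\Omega)\int_x^d hw + A\int_x^d h\zeta$ by $\int_0^d hw = \int_0^d h\zeta = 0$. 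The strategy is to control $|A|$ directly, handle the strips near the endpoints where $\tilde{u}$ is constant trivially, and close the estimate on the interior by Gronwall, switching between the two forms of (\ref{plan-eq-w}) to avoid the singularity of $1/h$ at the ends.

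First I would establish $|A| \leq C\epsilon$. Extending $\zeta$ to $\Omega$ as a function of $x$ only gives a valid Neumann test function (since $\int_\Omega \zeta = \int_0^d h\zeta = 0$) with Rayleigh quotient exactly $\mu_1(N)$, so $\mu_1(\Omega) \leq \mu_1(N)$. The matching lower bound comes from Lemma \ref{lem mu1 equ with bar u} combined with $|\eta| \leq Ch\epsilon$ (Lemma \ref{lem rough est of eta for general dim}), the uniform bound on $|\tilde{u}'|$ from the same lemma and Proposition \ref{prop directional deri est-any dim}, and $\int_0^d h\tilde{u}^2 \geq C^{-1}V(\Omega)$, which follows from the range of $\tilde{u}$ together with Lemma \ref{lem grad est without k}. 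On $[0,\epsilon]$, since $\tilde{u}$ is constant and $w(0) = 0$, one has $w' = -\zeta'$ with $|\zeta'| \leq C\epsilon$ by (\ref{upper bound of zeta' in x}), so $|w| \leq C\epsilon^2$ there; an analogous estimate $|w(x) - w(d-\epsilon)| \leq C\epsilon^2$ holds on $[d-\epsilon, d]$.

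For the interior, I would divide (\ref{plan-eq-w}) by $h(x)$ on $[\epsilon, d/2]$. The key geometric input is that $h^{1/(n-1)}$ is concave, which via (\ref{Concave property of h^{1/(n-1)}}) gives $\int_0^x h(t)\, dt \leq Cx\, h(x)$ for $x \leq d/2$. Together with $|\eta|/h \leq C\epsilon$, $|\zeta| \leq C$, and $|A| \leq C\epsilon$, this produces $|w'(x)| \leq C\epsilon + Cx\sup_{[0,x]}|w|$, and Gronwall then yields $\sup_{[0,d/2]}|w| \leq C\epsilon$ using $|w(\epsilon)| \leq C\epsilon^2$ as the initial value. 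On $[d/2, d-\epsilon]$ the dual form of (\ref{plan-eq-w}) plays the same role: the mirror bound $\int_x^d h\, dt \leq C(d-x)h(x)$ gives $|w'(x)| \leq C\epsilon + C(d-x)\sup_{[x,d]}|w|$, and a Gronwall run starting from the already-controlled value $|w(d/2)| \leq C\epsilon$ closes the bound $\sup_{[0,d]}|w| \leq C\epsilon$. Feeding this back into (\ref{plan-eq-w}) divided by $h(x)$, or into its dual, whichever is non-singular, then delivers $|w'| \leq C\epsilon$ pointwise on $[\epsilon, d-\epsilon]$. The main obstacle is the right endpoint: (\ref{plan-eq-w}) itself becomes singular there because $h(x) \to 0$ while $\int_0^x h$ does not, so the dual form based on $\int_x^d h$ is essential, and matching the two forms at $x = d/2$ through continuity of $w$ is what propagates the bound across the whole interval.
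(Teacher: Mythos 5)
Your overall scheme is the same as the paper's: the identity $hw'=\eta-\mu_1(\Omega)\int_0^x hw-A\int_0^x h\zeta$ is exactly the paper's equation for $\xi=\tilde u-\zeta$ (with $\tilde\eta=\eta-A\int_0^x h\tilde u$), the bound $|A|\le C\epsilon$ via Lemma \ref{lem mu1 equ with bar u} and Lemma \ref{lem rough est of eta for general dim}, the endpoint strips, and the forward Gronwall on the left half all match. The genuine gap is your treatment of $[d/2,d-\epsilon]$. The inequality you derive there, $|w'(x)|\le C\epsilon+C(d-x)\sup_{[x,d]}|w|$, bounds $w'$ at $x$ by values of $w$ to the \emph{right} of $x$; a ``Gronwall run starting from $|w(d/2)|\le C\epsilon$'' is a forward integration, and Gronwall does not apply to such a future-dependent right-hand side, while a backward Gronwall would require the terminal value $|w(d-\epsilon)|$ --- which is precisely what is unknown (on $[d-\epsilon,d]$ you only control the oscillation of $w$, not its size). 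Absorption does not rescue it either: the coefficient is at least $2\mu_1(N)\ge\pi^2/2$ (the mirror bound for $n=2$ carries a factor $2$), while $\int_{d/2}^d(d-s)\,ds=d^2/8\approx 1/2$, so the product exceeds $1$. In fact the system of inequalities you actually use --- $|w(d/2)|\le C\epsilon$, $|w'(x)|\le C\epsilon+K(d-x)\sup_{[x,d]}|w|$ with $K\ge 8/d^2$, and $\epsilon^2$-oscillation on $[d-\epsilon,d]$ --- is satisfied by functions proportional to $\int_{d/2}^x(d-s)\,ds$ with arbitrarily large supremum, so the conclusion cannot follow from those inequalities alone.

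This is exactly the difficulty that forces the paper's two-stage argument on the right side: first a forward (past-dependent) Gronwall up to $d-\delta$ for a fixed universal $\delta$, using the concavity estimate $\int_0^t h\le C(d-t)^{1-n}h(t)$ so the Gronwall constant depends only on $\delta$; then, on a terminal interval of universal length chosen so that $C_1(d-x)\le\frac12$, the reverse relation (\ref{upper bound of k at d-eps}) transfers the bound at $x=d-\delta$ to $\xi(d-\epsilon)$; only then does the backward estimate control $[\tau_0,d-\epsilon]$. Your argument can be repaired along the same lines: run the forward Gronwall with the non-dual form up to $d-\delta_0$ (constant of order $e^{C/\delta_0^{\,n-1}}$), and on $[d-\delta_0,d-\epsilon]$ use your dual form and absorb, with $\delta_0$ a universal constant small enough that $C\delta_0^2<1$; with $\sup|w|\le C\epsilon$ in hand, your final step recovering $|w'|\le C\epsilon$ on $[\epsilon,d-\epsilon]$ is fine. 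One smaller point: the lower bound $\int_0^d h\tilde u^2\ge C^{-1}V(\Omega)$ does not follow from the range of $\tilde u$ and Lemma \ref{lem grad est without k} alone (the mass of $[d-\epsilon,d]$ may be tiny); as in the paper, you need Proposition \ref{prop directional deri est-any dim} to show $u\ge\frac12$ on a region of mass comparable to $V(\Omega)$.
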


\pf
{\textbf{Step (1)}. In this step, we assume $x\leq \epsilon$. 

Because $\zeta$ is increasing on $[0, d]$,  we get 
\begin{align}
\zeta(x)\geq \zeta(0). \label{bound of phi by phi 0}
\end{align}

Then for $x\leq \epsilon$, from Lemma \ref{lem grad est of ODE-zeta},
\begin{align}
&|\tilde{u}- \zeta|(x)= |\zeta(x)- \zeta(0)| \leq |\int_0^x\zeta'(t)dt|\leq C\int_0^x tdt\leq Cx^2\leq C\epsilon^2 .\label{initial est of diff near end points}
\end{align}

\textbf{Step (2)}. In this step, we assume $x\in [\epsilon, \min\{\tau_0, d-\epsilon\}]$. Define $\tilde{\eta}(x)= \eta(x)- \int_0^x (\mu_1(\Omega)- \mu_1(N))h\tilde{u}ds$ and $\xi(x)= \tilde{u}(x)- \zeta(x)$. Recall
\begin{align}
h\tilde{u}'= \eta- \int_0^x \mu_1(\Omega) h\tilde{u}, \quad \quad h\zeta'= -\int_0^x \mu_1(N) h\zeta ds. \nonumber 
\end{align}

Note $\tilde{u}'= 0$ on $[0, \epsilon]\cup [d- \epsilon, d]$. From Lemma \ref{lem rough est of eta for general dim} we have 
\begin{align}
|\int_0^d \eta \tilde{u}'|\leq C\max_{t\in [0, d]}h(t)\cdot \epsilon\int_\epsilon^{d- \epsilon} |\tilde{u}'|\leq CV(\Omega)\epsilon \cdot \max|\tilde{u}|\leq CV(\Omega)\epsilon.  \label{norminator first estimate-higher dim}
\end{align}

From Lemma \ref{lem grad est without k},  we have 
\begin{align}
\inf_{x\in [d-\epsilon,d]}u(x,y)\ge 1-4C_1^2\epsilon^2.  \nonumber
\end{align}

By Proposition \ref{prop directional deri est-any dim},  we know that 
\begin{align}
|Du(x,y)|\le 10C_1^2(d-x),  \quad \quad \quad \forall \frac{d}{2}\leq x\le d-\epsilon.  \nonumber 
\end{align}

Thus for $x\in [d-\frac{1}{100C_1^2},d]$, we have
\begin{align}
u(x)\ge 1-4C_1^2\epsilon^2-10C_1^2\frac{1}{100C_1^2}\ge \frac{1}{2}.  \nonumber 
\end{align}
Now we get
\begin{align}
\int_{\Omega} h\bar{u}^2\ge \int_{\Omega\cap \{(x,  y): x\ge d-\frac{1}{100C_1^2}\}}\frac{1}{4}\ge CV(\Omega). \nonumber 
\end{align}

Because $\max |u|= 1$ and $d\leq 2$, we can get $\int_0^d h\cdot (\bar{u})^2\leq CV(\Omega)$ directly. Therefore
\begin{align}
C^{-1}\cdot V(\Omega)\leq \int_0^d h\cdot \bar{u}^2\leq C\cdot V(\Omega). \label{integra bound of bar u}
	\end{align}

On the other hand, from Lemma \ref{lem grad est without k},  we can get
\begin{align}
&|\int_0^d h\tilde{u}^2- \int_0^d h\bar{u}^2|= |\int_{0}^\epsilon h\cdot [\bar{u}(\epsilon)^2- \bar{u}^2(t)]+ \int_{d- \epsilon}^{d}h\cdot [\bar{u}(d- \epsilon)^2- \bar{u}^2(t)]| \nonumber \\
&\leq CV(\Omega)\epsilon^2. \nonumber 
\end{align}
Combining (\ref{integra bound of bar u}), we get
\begin{align}
	&C^{-1}\cdot V(\Omega)\leq \int_0^d h\cdot (\tilde{u})^2\leq C\cdot V(\Omega). \label{L2 upper bound of bar u}
	\end{align}

Now by Lemma \ref{lem mu1 equ with bar u}, (\ref{L2 upper bound of bar u}) and (\ref{norminator first estimate-higher dim}),  there is a universal constant $C> 0$ such that
\begin{align}
\mu_1(\Omega)\geq \mu_1(N)- C\epsilon . \label{NW eigenvalue comparison}
\end{align}

\textbf{Step (3)}. From (\ref{NW eigenvalue comparison}) and Lemma \ref{lem rough est of eta for general dim}, we can obtain
\begin{align}
|\tilde{\eta}|(x)&\leq C\epsilon h . \nonumber 
\end{align}

Now we get
\begin{align}
h\xi'&=h(\tilde{u}- \zeta)'= \eta- \int_0^x (\mu_1(\Omega)- \mu_1(N))h\tilde{u}+ \int_0^x \mu_1(N) h(\zeta- \tilde{u}) \nonumber \\
&= \tilde{\eta}- \int_0^x \mu_1(N)h\cdot \xi. \label{hk' eqa}
\end{align}

Note $h$ is increasing in $[0, \tau_0]$. Let $\mathcal{L}(x)= \sup_{s\leq x}|\xi(s)|$, then 
\begin{align}
\mathcal{L}'(x)&\leq |\xi'(x)|\leq \frac{|\tilde{\eta}|+ \int_0^x \mu_1(N) h|\xi|ds}{h(x)}\leq C\epsilon + \int_0^x \mu_1(N) |\xi|ds \nonumber \\
&\leq C\epsilon + C\cdot \mathcal{L}(x). \label{deri of k upper bound}
\end{align}

The above inequality implies 
\begin{align}
(e^{-Ct}\mathcal{L}(t))'\leq C\epsilon e^{-Ct}\leq C\epsilon , \quad \quad \quad \forall t\in [\epsilon, \tau_0]. \nonumber 
\end{align}

Note $\mathcal{L}(\epsilon)\leq C\epsilon^2$, from the above we get 
\begin{align}
\mathcal{L}(x)\leq e^{Cx}C\epsilon\cdot \int_\epsilon^x 1dt+ \mathcal{L}(\epsilon)\leq C\cdot\epsilon, \quad \quad \forall x\in [\epsilon, \tau_0]. \label{upper bound of k-first interval}
\end{align}

Using (\ref{deri of k upper bound}), we have
\begin{align}
|\xi'|(x)\leq C\epsilon , \quad \quad \forall x\in [\epsilon, \tau_0]. \label{upper bound of k'-first interval}
\end{align}

\textbf{Step (4)}. Next we consider the case $x\in [\tau_0, d- \epsilon]$ and define 
\begin{align}
\mathcal{R}(x)\vcentcolon= \sup_{t\in [x, d- \epsilon]}|\xi(t)|.  \nonumber 
\end{align}

Similar to (\ref{hk' eqa}), we have
\begin{align}
&h\xi'(x)= \tilde{\eta}(x)+ \int_x^d \mu_1(N)h \xi dt,  \nonumber \\
&|\tilde{\eta}(x)|\leq C\epsilon h(x).  \nonumber 
\end{align}

Note $|\xi|\leq C$ by Lemma \ref{lem grad est of ODE-zeta} and $|\tilde{u}|\leq 1$.  Now for any $x\in [\tau_0, d- \epsilon]$, we get
\begin{align}
-\mathcal{R}'(x)&\leq |\xi'(x)|\leq \frac{|\tilde{\eta}|(x)+ \int_x^d \mu_1(N)h |\xi|dt}{h(x)}\leq C\epsilon + \mu_1(N)\int_x^d |\xi|dt \nonumber \\
&\leq C\epsilon + C\cdot \mathcal{R}(x)+ \int_{d- \epsilon}^dC\cdot |\xi| \label{later come back ineq} \\
&\leq C\epsilon + C\cdot \mathcal{R}(x). \label{deri of k on the right side} 
\end{align}

Now we get 
\begin{align}
-(e^{Ct}\mathcal{R}(t))'\leq Ce^{-Ct}\epsilon\leq C\epsilon,  \quad \quad \forall t\in [\tau_0, d- \epsilon].  \nonumber
\end{align}

Taking the integration from $x$ to $d- \epsilon$ in the above, we have
\begin{align}
\mathcal{R}(x)\leq Ce^{-Cx}\Big\{C\epsilon\int_{x}^{d- \epsilon} 1dt+ Ce^{C(d- \epsilon)}\mathcal{R}(d- \epsilon)\Big\} \leq C\cdot (\epsilon+ \mathcal{R}(d- \epsilon)).  \label{upper bound of right function}
\end{align}

From (\ref{deri of k on the right side}) and (\ref{upper bound of right function}), we obtain
\begin{align}
\xi(x)&= \xi(d- \epsilon)- \int_x^{d-\epsilon}\xi'(s)ds\geq \xi(d- \epsilon)- \int_x^{d- \epsilon} C\epsilon + C\mathcal{R}(s)ds \nonumber \\
&\geq \xi(d- \epsilon)- C\epsilon- C(d- x)\mathcal{R}(x)\nonumber \\
&\geq \xi(d- \epsilon)- C\epsilon- C(d- x)(\epsilon+ \mathcal{R}(d- \epsilon))\nonumber \\
&\geq [1- C_1(d- x)]\xi(d- \epsilon)- C\epsilon.  \nonumber 
\end{align}

Therefore we have
\begin{align}
\xi(d- \epsilon)\leq \frac{\xi(x)+ C\epsilon}{1- C_1(d-x)},  \quad \quad \forall x\in[\max\{\tau_0, d- \frac{1}{2C_1}\}, d-\epsilon].  \label{upper bound of k at d-eps}
\end{align}

\textbf{Step (5)}. Now choose $\delta\in [\epsilon, d- \epsilon]$, for $t\in [\epsilon, d- \delta]$, using the concave property of $h^{\frac{1}{n-1}}$, we have
\begin{align}
		\int_0^t \frac{h(s)}{h(t)}ds\leq \int_0^t (\frac{d- s}{d- t})^{n-1}ds\leq \frac{C}{(d- t)^{n-1}}\leq \frac{C}{\delta^{n-1}}.  \nonumber 
\end{align}

Now we compute $\mathcal{L}'$ again to obtain
\begin{align}
\mathcal{L}'(t)&\leq |\xi'(t)|\leq \frac{|\tilde{\eta}|+ \int_0^t \mu_1(N)h(s)|\xi|ds}{h(t)}\leq C\epsilon + \int_0^t C\frac{h(s)}{h(t)} |\xi|ds \nonumber \\
&\leq  C\epsilon + \frac{C}{\delta^{n-1}}\mathcal{L}(t). \nonumber 
\end{align}

Using (\ref{initial est of diff near end points}), we know that $\mathcal{L}(\epsilon)\leq C\epsilon^2$.  Integrating the above inequality from $\epsilon$ to $t$, yields
\begin{align}
\mathcal{L}(t)\leq e^{\frac{C}{\delta^{n-1}}(t- \epsilon)}\cdot C\epsilon,  \quad \quad \forall t\in [\epsilon, d- \delta].  \label{Lt general bound}
\end{align}

Now we choose $\delta= \frac{1}{2C_1}$ where $C_1$ is from (\ref{upper bound of k at d-eps}), then by (\ref{Lt general bound}) we get
\begin{align}
\xi(d- \frac{1}{2C_1})\leq C\epsilon.  \label{upper bound of k near d}
\end{align}

If $\tau_0\geq d- \frac{1}{2C_1}$, note $\xi(\tau_0)\leq C\epsilon$, choose $x= \tau_0$ in (\ref{upper bound of k at d-eps}) we get
\begin{align}
\xi(d- \epsilon)\leq \frac{\xi(\tau_0)+ C\epsilon}{1- C_1(d- \tau_0)}\leq C\epsilon.  \nonumber  
\end{align}
If $\tau_0< d- \frac{1}{2C_1}$, choosing $x= d- \frac{1}{2C_1}$ in (\ref{upper bound of k at d-eps}), using (\ref{upper bound of k near d}), we also get
\begin{align}
\xi(d- \epsilon)\leq C\epsilon. \label{xi at d-eps 1}
\end{align}

By (\ref{xi at d-eps 1}) and (\ref{upper bound of right function}),  we get
\begin{align}
\sup_{x\in [\tau_0, d-\epsilon]}|\xi(x)|\leq \mathcal{R}(x)\leq C\cdot \epsilon.  \label{sup diff in the mid interval}
\end{align}

\textbf{Step (6)}. For $x\in [d- \epsilon, d]$,  using Lemma \ref{lem grad est of ODE-zeta} and (\ref{sup diff in the mid interval}),  we have
\begin{align}
|\xi(x)|&\leq |\xi(x)- \xi(d- \epsilon)|+ |\xi(d- \epsilon)| \leq  |\zeta(x)- \zeta(d- \epsilon)|+ C\epsilon \nonumber \\
&\leq \sup_{t\in [d- \epsilon, d]}|\zeta'(t)|\epsilon+ C\epsilon \leq C\epsilon.  \label{small near the right end point} 
\end{align}

Plugging (\ref{small near the right end point}) into (\ref{later come back ineq}),  for $x\in [\tau_0, d- \epsilon]$ we get 
\begin{align}
-\mathcal{R}'(x)&\leq |\xi'(x)|\leq C\epsilon + C\cdot \mathcal{R}(x)+ \int_{d- \epsilon}^dC\cdot |\xi|\leq C\epsilon+ C\cdot \mathcal{R}(x); \label{deri of k on the right side-1} \\
\mathcal{R}(x)&\leq C(\epsilon^2+ \mathcal{R}(d- \epsilon)). \label{upper bound of right function-1}
\end{align}

Note for $x\in [\epsilon, d- \epsilon]$,  we have $|h'(x)|\leq 1$.  From (\ref{upper bound of k'-first interval}), (\ref{deri of k on the right side-1}),  we get
\begin{align}
\sup_{x\in [\epsilon, d-\epsilon]}|\tilde{u}'- \zeta'|\leq C\epsilon.  \nonumber 
\end{align}
}
\qed

\section{The integral estimate of vertical directional derivatives}\label{sec dire derivative est}

\begin{lemma}\label{lem integ est of Dy u}
{There is a universal constant $C> 0$ such that
\begin{align}
\int_\Omega|D_y u|^2\leq C\epsilon\cdot V(\Omega). \nonumber 
\end{align}
}
\end{lemma}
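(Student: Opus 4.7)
The plan is to exploit the Rayleigh-quotient identity $\int_\Omega |Du|^2 = \mu_1(\Omega)\int_\Omega u^2$ and compare both sides to the ODE-side quantities, using the already-established estimates for $\tilde u$ and $\eta$. Decomposing the gradient and integrating yields
\[
\int_\Omega u_x^2 + \int_\Omega |D_y u|^2 \;=\; \mu_1(\Omega)\int_\Omega u^2,
\]
so it suffices to compare $\int_\Omega u_x^2$ from below and $\mu_1(\Omega)\int_\Omega u^2$ from above against matching quantities, with discrepancies of order $V(\Omega)\epsilon$.

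First I would control $\int_\Omega u^2$ in terms of $\int_0^d h\bar u^2$. On each slice, the orthogonal decomposition $\int_{\Omega_x} u^2 = h(x)\bar u(x)^2 + \int_{\Omega_x}(u-\bar u)^2$ combined with the slicewise Neumann--Poincar\'e inequality (valid since $\Omega_x\subseteq \mathbb{R}^{n-1}$ is convex with diameter $\leq \epsilon$, as forced by $\mathscr{PW}(\Omega)=\epsilon$) gives $\int_{\Omega_x}(u-\bar u)^2\leq C\epsilon^2 \int_{\Omega_x}|D_yu|^2$. Integrating in $x$ produces
\[
\int_\Omega u^2 \;\leq\; \int_0^d h\bar u^2\,dx + C\epsilon^2\int_\Omega |D_yu|^2.
\]

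Next I would control $\int_\Omega u_x^2$ from below by $\int_0^d h(\tilde u')^2$. By Cauchy--Schwarz on each slice, $\int_{\Omega_x} u_x^2 \geq h(x)\bigl(\tfrac{1}{h}\int_{\Omega_x}u_x\bigr)^2$, and Lemma \ref{lem rough est of eta for general dim} together with the uniform bounds on $\tilde u'$ (coming from Proposition \ref{prop C0-1 close between ODE and PDE solution} and Lemma \ref{lem grad est of ODE-zeta}) gives $\tfrac{1}{h}\int_{\Omega_x} u_x = \tilde u'(x) + O(\epsilon)$ on $[\epsilon,d-\epsilon]$, hence
\[
\int_\Omega u_x^2 \;\geq\; \int_0^d h(\tilde u')^2\,dx - CV(\Omega)\epsilon.
\]
On the ODE side, Lemma \ref{lem mu1 equ with bar u} gives $\mu_1(\Omega)\int_0^d h\tilde u^2 = \int_0^d h(\tilde u')^2 - \int_0^d \eta\tilde u'$, and the bound $|\int_0^d \eta\tilde u'|\leq CV(\Omega)\epsilon$ was already proved inside Proposition \ref{prop C0-1 close between ODE and PDE solution}. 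Comparing $\int_0^d h\bar u^2$ to $\int_0^d h\tilde u^2$ (the difference is $O(V(\Omega)\epsilon^2)$ by Lemma \ref{lem grad est without k}) then yields
\[
\mu_1(\Omega)\int_0^d h\bar u^2 - \int_0^d h(\tilde u')^2 \;\leq\; CV(\Omega)\epsilon.
\]

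Assembling the three displays: $\mu_1(\Omega)\int_\Omega u^2 - \int_\Omega u_x^2 \leq CV(\Omega)\epsilon + C\mu_1(\Omega)\epsilon^2 \int_\Omega |D_yu|^2$, so
\[
\bigl(1-C\mu_1(\Omega)\epsilon^2\bigr)\int_\Omega |D_yu|^2 \;\leq\; CV(\Omega)\epsilon,
\]
and for $\epsilon$ small the prefactor is bounded below by $\tfrac12$, giving the desired conclusion. The main delicate point is the book-keeping of error terms: ensuring that every discrepancy (Cauchy--Schwarz defect on slices, $\bar u$ versus $\tilde u$ at the two end intervals, and the $\eta\tilde u'$ integral) is truly $O(V(\Omega)\epsilon)$ rather than a larger power, so that after dividing by the $(1-C\mu_1\epsilon^2)$ factor the bound still has the clean form $CV(\Omega)\epsilon$.
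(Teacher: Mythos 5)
Your argument is correct, and it takes a route that differs from the paper's in a meaningful way. The paper also starts from $\int_\Omega |D_yu|^2=\mu_1(\Omega)\int_\Omega u^2-\int_\Omega u_x^2$, but it then compares against the ODE eigenfunction $\zeta$ and $\mu_1(N)$, splitting into $\mu_1(N)\int_\Omega(u^2-\zeta^2)$ and $\int_\Omega((\zeta')^2-u_x^2)$; controlling the latter forces it to bound the \emph{absolute} slice difference $\int_{\Omega_x}|u_x^2-(\zeta')^2|$, which requires the slicewise Poincar\'e inequality applied to $u_x$ together with the convexity (Reilly-type) estimate $\int_\Omega|D^2u|^2\leq\int_\Omega|\Delta u|^2$, plus the $C^0$ and $C^1$ closeness of $\tilde{u}$ and $\zeta$ from Proposition \ref{prop C0-1 close between ODE and PDE solution}. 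You instead stay with $\tilde{u}$ and $\mu_1(\Omega)$, invoking the exact identity of Lemma \ref{lem mu1 equ with bar u} together with the bound $|\int_0^d\eta\tilde{u}'|\leq CV(\Omega)\epsilon$ already proved in (\ref{norminator first estimate-higher dim}), and you only need the \emph{one-sided} Jensen inequality $\int_{\Omega_x}u_x^2\geq h(x)\bigl(\tfrac{1}{h}\int_{\Omega_x}u_x\bigr)^2$ combined with $|\tilde{u}'-\tfrac{1}{h}\int_{\Omega_x}u_x|\leq C\epsilon$ from Lemma \ref{lem rough est of eta for general dim}; this avoids the Hessian estimate and the boundary convexity computation entirely, which is a genuine simplification. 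The only points to make explicit are the ones you flag: $\tilde{u}'\equiv 0$ on $[0,\epsilon]\cup[d-\epsilon,d]$ so the truncated lower bound for $\int_\Omega u_x^2$ really matches $\int_0^d h(\tilde{u}')^2$; the uniform bound $|\tilde{u}'|\leq C$ on $[\epsilon,d-\epsilon]$ (from Proposition \ref{prop C0-1 close between ODE and PDE solution} and Lemma \ref{lem grad est of ODE-zeta}) needed to pass from $|a-b|\leq C\epsilon$ to $a^2\geq b^2-C\epsilon$; and the standing bound $\mu_1(\Omega)\leq\frac{\pi^2}{4}+1$ together with $\epsilon$ small so that the factor $1-C\mu_1(\Omega)\epsilon^2$ can be absorbed. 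With those in place the bookkeeping closes and every error term is indeed $O(V(\Omega)\epsilon)$. (Note the paper handles your variance term more crudely, bounding $\int_\Omega(u-\bar{u})^2\leq C\epsilon^2 V(\Omega)$ directly via $|Du|\leq C$, whereas you absorb $C\epsilon^2\int_\Omega|D_yu|^2$ into the left-hand side; both work.)
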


\pf
{\textbf{Step (1)}. Note $\zeta$ is defined on $[0, d]$, the integral $\int_\Omega \zeta^2$ is defined as follows:
\begin{align}
\int_\Omega \zeta^2\vcentcolon= \int_0^d dx \int_{\Omega_x}\zeta^2(x)dy= \int_0^d h(x)\zeta^2(x)d, \nonumber 
\end{align}
where $y\in \mathbb{R}^{n- 1}$. Similar definition applies on $\bar{u}, \tilde{u}$.

Note we have
\begin{align}
\int_\Omega |u_y|^2&= \int_\Omega |Du|^2- |u_x|^2= \mu_1(\Omega)
\int_\Omega |u|^2- \int_\Omega |u_x|^2 \nonumber \\
&\leq \mu_1(N)\int_\Omega u^2- \int_\Omega |u_x|^2= (I)+ (II);\nonumber 
\end{align}
where 
\begin{align}
 (I)= \mu_1(N)\int_\Omega [u^2- \zeta^2], \quad \quad  (II)= \int_\Omega [(\zeta')^2- u_x^2]
\end{align}

From Lemma \ref{lem grad est without k} and $\int_0^d h\bar{u}= 0$, we have
\begin{align}
&\quad |\bar{u}- \tilde{u}|(x)\leq |\bar{u}- \hat{u}|(x)+ |\hat{u}- \tilde{u}|(x) \nonumber \\
&\leq \sup_{x_1\in [0, \epsilon]}|\bar{u}(x_1)- \bar{u}(\epsilon)|+ \sup_{x_2\in [d- \epsilon, d]}|\bar{u}(x_2)- \bar{u}(d- \epsilon)|+  |\frac{\int_0^d h\hat{u}}{\int_0^d h}| \nonumber \\
&\leq C\cdot \epsilon^2+ |\frac{\int_0^d h\bar{u}- h\hat{u}}{\int_0^d h}| \leq C\epsilon^2. \label{diff between bar u and tilde u}
\end{align}

Then using Proposition \ref{prop C0-1 close between ODE and PDE solution} and (\ref{diff between bar u and tilde u}),  we have
\begin{align}
(I)&\leq C|\int_\Omega u^2- \bar{u}^2|+ C|\int_\Omega \bar{u}^2- \tilde{u}^2|+ C|\int_\Omega \tilde{u}^2- \zeta^2| \nonumber \\
&\leq C\cdot \int_\Omega (u- \bar{u})^2+ C\cdot \epsilon V(\Omega) \nonumber  .
\end{align}

On the other hand, note $\Omega_x$ is a convex domain with diameter $\leq\epsilon$,  then we get $\mu_1(\Omega_x)\geq \frac{C}{\epsilon^2}$ by Payne-Weinberger inequality (see \cite{PW}).  Hence
\begin{align}
\int_\Omega (u- \bar{u})^2&\leq \int_0^d dx\int_{\Omega_x} |u- \bar{u}|^2 dy \leq \Big(\max_{x\in [0, d]}\frac{1}{\mu_1(\Omega_x)}\Big)\cdot \int_\Omega |D_yu|^2\leq C\epsilon^2 \int_\Omega |Du|^2 \nonumber \\
&\leq C\epsilon^2\cdot V(\Omega).  \nonumber 
\end{align}

By the above we get
\begin{align}
(I)\leq C\epsilon\cdot V(\Omega).  \nonumber 
\end{align}

\textbf{Step (2)}. By Lemma \ref{lem grad est without k} and Lemma \ref{lem grad est of ODE-zeta}, we estimate $(II)$ as follows:
\begin{align}
(II)&\leq \int_{[0, \epsilon]\cup [d- \epsilon, d]} dx \int_{\Omega_x}|u_x^2|+ |\zeta'^2|+ \int_{[\epsilon, d- \epsilon]} dx \int_{\Omega_x}|u_x^2- \zeta'^2|\nonumber \\
&= C\epsilon^2 V(\Omega)+ (II)_1+ (II)_2; \nonumber 
\end{align}
where
\begin{align}
(II)_1= \int_{[\epsilon, d- \epsilon]} dx \int_{\Omega_x}|u_x^2- \tilde{u}'^2|, \quad\quad \quad (II)_2= \int_{[\epsilon, d- \epsilon]} dx \int_{\Omega_x} |\tilde{u}'^2- \zeta'^2|  \nonumber 
\end{align}

By Proposition \ref{prop C0-1 close between ODE and PDE solution},  we get
\begin{align}
(II)_2\leq C\epsilon\cdot V(\Omega).  \nonumber 
\end{align}

By Proposition \ref{prop C0-1 close between ODE and PDE solution}, Lemma \ref{lem grad est of ODE-zeta} and $|Du|\leq C$, we get
\begin{align}
\sup_{t\in [\epsilon, d- \epsilon]}|u_x|+ |\tilde{u}'|\leq C. \label{bar u' ineq}
\end{align}

By the convexity property of $\Omega$ and $\frac{\partial u}{\partial \vec{n}}\big|_{\partial\Omega}= 0$, we have $\displaystyle \Big(D_{\vec{n}}D_{\vec{t}}w\cdot D_{\vec{t}}w\Big)\big|_{\partial \Omega}\leq 0$, where $\vec{t}$ is the unit tangent vector of $\partial \Omega$. 

Then
\begin{align}
\frac{1}{2}\int_{\Omega}\Delta(|Du|^2)= \frac{1}{2}\int_{\partial\Omega}\frac{\partial}{\partial\vec{n}}|Du|^2= \int_{\partial\Omega}D_{\vec{n}}D_{\vec{t}}u\cdot D_{\vec{t}}u\leq 0.  \nonumber 
\end{align}

On the other hand, 
\begin{align}
\frac{1}{2}\int_{\Omega}\Delta(|Du|^2)= \int_{\Omega}|D^2u|^2+ D(\Delta u)\cdot Du= \int_{\Omega}|D^2u|^2- |\Delta u|^2.  \nonumber 
\end{align}

We have 
\begin{align}
\int_\Omega |D^2 u|^2\leq \int_\Omega |\Delta u|^2.  \label{Hessian est by Lap on convex for Neumann func}
\end{align}

Using (\ref{bar u' ineq}), (\ref{Hessian est by Lap on convex for Neumann func}) and $\mu_1(\Omega_x)\geq \frac{C}{\epsilon^2}$, using Lemma \ref{lem rough est of eta for general dim} we have 
\begin{align}
(II)_1&\leq C\int_\epsilon^{d- \epsilon} dx\int_{\Omega_x} \Big|u_x- \tilde{u}'|dy\nonumber \\
&\leq C\int_\epsilon^{d- \epsilon} dx\int_{\Omega_x} |u_x- \frac{1}{h}\int_{\Omega_x}u_x|dy+ C\int_\epsilon^{d- \epsilon} dx\int_{\Omega_x}|\frac{1}{h}\int_{\Omega_x}u_x- \tilde{u}'| dy  \nonumber \\
&\leq C\int_\epsilon^{d- \epsilon} \sqrt{h}\cdot \Big\{\int_{\Omega_x} \Big|u_x- \frac{1}{h}\int_{\Omega_x}u_x dy \Big|^2\Big\}^{\frac{1}{2}} dx+ C\epsilon V(\Omega)\nonumber \\
&\leq C\int_\epsilon^{d- \epsilon} \sqrt{h}\cdot \Big\{\frac{1}{\mu_1(\Omega_x)} \int_{\Omega_x} |D^2 u|^2\Big\}^{\frac{1}{2}} dx+ C\epsilon V(\Omega)\nonumber \\
&\leq C\epsilon\cdot (\int_\epsilon^{d- \epsilon} h)^\frac{1}{2}\cdot \Big\{\int_\epsilon^{d- \epsilon}dx\int_{\Omega_x} |D^2 u|^2\Big\}^{\frac{1}{2}}+ C\epsilon V(\Omega)  \nonumber \\
&\leq C\epsilon\cdot V(\Omega)^{\frac{1}{2}}\cdot (\int_\Omega |\Delta u|^2)^{\frac{1}{2}}+ C\epsilon V(\Omega) \nonumber \\
&\leq C\epsilon\cdot V(\Omega).  \nonumber 
\end{align}

The conclusion follows from the above.  
}
\qed

\part{The planar convex domain}

\textbf{In the rest of the paper, we assume $\Omega\subseteq \mathbb{R}^2$}. 

\section{Key equations for planar convex domain}\label{sec key equality}

\begin{lemma}\label{lem pj-width is equal to width}
{We have
\begin{align}
&\mathrm{Width}(\Omega)= \mathscr{PW}(\Omega)= \epsilon= \hat{\epsilon}, \quad \quad t_0= \tau_0,  \nonumber \\
&\Omega= \{(x, y)\in \mathbb{R}^2: x\in [0, d], h_-(x)\leq y\leq h_+(x)\}, \nonumber \\
&h(x)= h_+(x)- h_-(x), \quad \quad   |h'(x)|= |h_+'(x)|+ |h_-'(x)|,  \quad \forall x\in [0, d]. \label{deri of h control h- and h+ deri}
\end{align}
}
\end{lemma}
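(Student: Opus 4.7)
The plan is to combine a standard planar convex decomposition with a subgradient analysis of the projective-width function at its minimizing direction $v_0$, from which all five assertions flow.

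I begin with the boundary representation. By convexity of $\Omega \subseteq \mathbb{R}^2$, each vertical slice $\Omega_x$ is a closed interval; setting $h_+(x) = \sup\{y: (x, y) \in \Omega\}$ and $h_-(x) = \inf\{y: (x, y) \in \Omega\}$ gives the decomposition $\Omega = \{(x, y) : x \in [0, d], h_-(x) \leq y \leq h_+(x)\}$ with $h_+$ concave, $h_-$ convex on $[0, d]$, and $h(x) = h_+(x) - h_-(x)$. Since two-dimensional slices $\Omega_t$ are intervals of length $h(t)$, both $t_0$ and $\tau_0$ are argmaxes of $h$, so $t_0 = \tau_0$.

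The heart of the proof is the identity $\hat{\epsilon} = \epsilon$. Let $y_{\max}, y_{\min}$ be the extremal $y$-coordinates of $\Omega$, set $T_\pm = \Omega \cap \{y = y_\pm\}$, and write the $x$-projection of $T_\pm$ as $[a_\pm, b_\pm]$ (possibly degenerate). Parametrize the projective-width function as $w(\theta) = M(\theta) - m(\theta)$, where $M(\theta) = \max_{(x,y) \in \Omega}(-x\sin\theta + y\cos\theta)$ is convex in $\theta$ and $m(\theta) = \min_{(x,y) \in \Omega}(-x\sin\theta + y\cos\theta)$ is concave, so $w$ is convex and $\theta = 0$ is its global minimum (by the defining property of $v_0$). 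Computing the subdifferentials at $\theta = 0$ gives $\partial M(0) = [-b_+, -a_+]$ and $\partial m(0) = [-b_-, -a_-]$, and the first-order minimality condition $w'_+(0) \geq 0 \geq w'_-(0)$ translates into
\[ w'_+(0) = b_- - a_+ \geq 0, \qquad w'_-(0) = a_- - b_+ \leq 0,\]
equivalently $[a_+, b_+] \cap [a_-, b_-] \neq \emptyset$. Picking $x_0$ in this overlap, $(x_0, y_{\max})$ and $(x_0, y_{\min})$ both lie in $\Omega$, so convexity forces $h(x_0) = y_{\max} - y_{\min} = \epsilon$. Combined with the trivial bound $h \leq \epsilon$ coming from $\Omega_x \subseteq \{y_{\min} \leq y \leq y_{\max}\}$, this yields $\hat{\epsilon} = \max h = \epsilon$. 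The equalities $\mathrm{Width}(\Omega) = \mathscr{PW}(\Omega) = \epsilon$ then follow: setting $v = v_0$ in the definition of $\mathrm{Width}$ gives $\mathrm{Width}(\Omega) \leq \max_t h(t) = \epsilon$, and the matching lower bound $\mathrm{Width}(\Omega) \geq \mathscr{PW}(\Omega)$ is the standard planar-convex fact that the minimum over directions of the maximal cross-section coincides with the minimum shadow length (obtained by the same subgradient argument applied at the minimizer of $\max_t \mathrm{diam}(P_{v, t} \cap \Omega)$).

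Finally, the derivative identity follows from the overlap obtained in the previous step. Since $h(\tau_0) = \epsilon$ forces $h_+(\tau_0) = y_{\max}$ and $h_-(\tau_0) = y_{\min}$, the concave $h_+$ attains its global maximum at $\tau_0$, so it is non-decreasing on $[0, \tau_0]$ and non-increasing on $[\tau_0, d]$, while the convex $h_-$ behaves oppositely. Consequently $h_+'$ and $h_-'$ carry opposite signs on each of $[0, \tau_0]$ and $[\tau_0, d]$, and writing $h' = h_+' - h_-'$ gives $|h'| = |h_+'| + |h_-'|$.

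The main obstacle is the subgradient step: correctly computing the one-sided derivatives of the convex width function $w$ at its minimizer and translating the first-order condition into the geometric overlap $[a_+, b_+] \cap [a_-, b_-] \neq \emptyset$, handling uniformly both the single-point case and the case where $T_\pm$ is a genuine segment (where $w$ need not be differentiable). Once this overlap is secured, every remaining claim reduces to routine convexity reasoning.
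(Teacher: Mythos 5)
Your proposal is correct in substance, but it reaches the key facts by a different mechanism than the paper. The paper perturbs at the point: it takes the unit normal $\vec{n}$ to the tangent of the upper graph at $(\tau_0,h_+(\tau_0))$, observes that $h'(\tau_0)=0$ makes the tangent to the lower graph at $(\tau_0,h_-(\tau_0))$ parallel to it, and compares $\epsilon$ with the extent of $\Omega$ in the direction $\vec{n}$, namely $\epsilon\le \sup_{p,q\in\Omega}(p-q)\cdot\vec{n}=h(\tau_0)/\sqrt{1+h_+'(\tau_0)^2}\le \epsilon/\sqrt{1+h_+'(\tau_0)^2}$; this single chain forces $h_+'(\tau_0)=h_-'(\tau_0)=0$ together with $h(\tau_0)=\epsilon$, and then the monotonicity of the concave $h_+$ and convex $h_-$ about $\tau_0$ gives $|h'|=|h_+'|+|h_-'|$, exactly as in your last step. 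You instead perturb the direction: first-order optimality of $\theta=0$ for the shadow function $w(\theta)=M(\theta)-m(\theta)$ yields the overlap of the $x$-projections of the two horizontal touching sets, hence a vertical chord of full length $\epsilon$ and $\hat\epsilon=\max h=\epsilon$, after which the same monotonicity argument finishes. These are dual first-order arguments; yours makes the ``double normal in the minimizing direction'' explicit, while the paper's is shorter because one inequality extracts both $h_\pm'(\tau_0)=0$ and $h(\tau_0)=\epsilon$ at once. Two caveats on your write-up, neither fatal: $M(\theta)$ and $m(\theta)$ are not convex/concave as functions of the angle (they are support-function values along a curved path of directions), but this is harmless because you only need the one-sided derivatives at an interior global minimum, and your formulas $w_+'(0)=b_--a_+$ and $w_-'(0)=a_--b_+$ are the correct Danskin-type values; and the lower bound $\mathrm{Width}(\Omega)\ge\mathscr{PW}(\Omega)$ is not literally ``the same subgradient argument'' applied to the max-chord functional -- a clean proof is the translation trick: if the longest chord of $\Omega$ in a direction $u$ has length $\ell$, then $\Omega$ and $\Omega+\ell u$ have disjoint interiors, and a separating line together with its translate by $-\ell u$ confines $\Omega$ to a strip of width at most $\ell$, so every maximal chord length dominates the classical width. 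The paper itself treats this identification briskly (its chain simply writes $\epsilon=\mathrm{Width}(\Omega)$), so this is a point where both arguments deserve a sentence of justification rather than a gap peculiar to yours.
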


\begin{remark}\label{rem key equ for 2-dim case}
{For the equation (\ref{deri of h control h- and h+ deri}), also see \cite[Lemma $3.1$]{GJ}.
}
\end{remark}

\pf
{Firstly we have
\begin{align}
\Omega= \{(x,y): x\in [0, d],y\in [h_-(x), h_+(x)]\}. \nonumber 
\end{align}

Let $\vec{n}=\frac{(-h_{+}'(\tau_0),1)}{\sqrt{h_{+}'(\tau_0)^2+1}}$.  By  $\Omega$ is convex,  we know that $\Omega$ lies on one connected component of $\mathbb{R}^2- \gamma$,  where $\gamma$ is the line passing $(\tau_0, h_+(\tau_0))$ with $\gamma'\cdot \vec{n}= 0$.

Hence we have $\Omega\subset \{(x,y)\in \R^2|(x,y)\cdot \vec{n}\le  (\tau_0,h_{+}(\tau_0))\cdot \vec{n}\}$, so 
\begin{align}
\sup_{(x,y)\in \Omega}(x,y)\cdot \vec{n}= (\tau_0,h_{+}(\tau_0))\cdot \vec{n}.  \nonumber 
\end{align}

Note $h'(\tau_0)= 0$, we get $h_+'(\tau_0)= h_-'(\tau_0)$. Similarly we have 
\begin{align}
\inf_{(x,y)\in \Omega}(x,y)\cdot \vec{n}= (\tau_0,h_{-}(\tau_0))\cdot \vec{n}.  \nonumber 
\end{align}

By definition of $\mathrm{Width}(\Omega)$, we have 
\begin{align}
\epsilon&=\mathrm{Width}(\Omega)\leq \sup_{p, q\in \Omega}(p- q)\cdot \vec{n} \nonumber \\
&= \Big[(\tau_0,h_{+}(\tau_0))- (\tau_0,h_{-}(\tau_0))\Big]\cdot \vec{n}=\frac{h(\tau_0)}{\sqrt{h_{+}'(\tau_0)^2+1}}= \frac{\epsilon}{\sqrt{h_{+}'(\tau_0)^2+1}}.  \nonumber 
\end{align}

Hence $h_{+}'(\tau_0)=0$.  Similarly we get $h_-'(\tau_0)= 0$, and we have
\begin{align}
|h'(x)|= |h_+'(x)|+ |h_-'(x)|, \quad \quad \forall x\in [0, d]. \nonumber 
\end{align}
}
\qed

\section{The improved estimates of error terms}\label{sec improved est}

\begin{lemma}\label{lem improved est of eta-2dim}
{There is a universal constant $C> 0$, such that 
\begin{align}
|\eta|(x)\leq |h'(x)| \cdot \sqrt{h(x)}\cdot \sqrt{\int_{\Omega_x} |D_yu|^2} + Ch(x)\epsilon^3, \quad \quad \forall x\in [\epsilon, d- \epsilon].\nonumber 
\end{align}
}
\end{lemma}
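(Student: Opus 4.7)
The plan is to derive an explicit formula for $\eta(x)$ in the 2D setting, exploiting the graph parametrization $y \in [h_-(x), h_+(x)]$ of $\Omega$ provided by Lemma \ref{lem pj-width is equal to width}. For $x \in [\epsilon, d-\epsilon]$ we have $\tilde{u}' = \bar{u}'$, so the strategy is to compute $h\bar{u}'(x) + \mu_1(\Omega)\int_0^x h\bar{u}$ explicitly and then absorb the discrepancy $\mu_1(\Omega)\int_0^x h(\tilde{u} - \bar{u})$ as an error of size $Ch(x)\epsilon^3$, using Lemma \ref{lem grad est without k} and (\ref{est of c1}) exactly as in the last part of the proof of Lemma \ref{lem rough est of eta for general dim}.

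For the explicit formula, differentiating $h(x)\bar{u}(x) = \int_{h_-(x)}^{h_+(x)} u(x,y) dy$ via Leibniz's rule yields
\begin{align}
h\bar{u}'(x) = h_+'(x)[u(x,h_+(x)) - \bar{u}(x)] - h_-'(x)[u(x,h_-(x)) - \bar{u}(x)] + \int_{\Omega_x} u_x dy.\nonumber
\end{align}
Integrating $\Delta u = -\mu_1(\Omega)u$ over $\Omega_x$ and using the Neumann identities $u_y = u_x h_\pm'$ on the boundary curves $y = h_\pm(x)$ to cancel the $u_{yy}$ flux against the boundary terms from differentiating $\int_{\Omega_x} u_x$, one verifies $\frac{d}{dx}\int_{\Omega_x} u_x dy = -\mu_1(\Omega)h(x)\bar{u}(x)$, and therefore $\int_{\Omega_x} u_x dy = -\mu_1(\Omega)\int_0^x h\bar{u}$ (the constant of integration vanishes because $\Omega_0$ is either a point or a vertical segment on which the Neumann condition forces $u_x = 0$). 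Substituting and folding in the error estimate gives
\begin{align}
|\eta(x)| \leq |h_+'(x)|\cdot|u(x,h_+(x)) - \bar{u}(x)| + |h_-'(x)|\cdot|u(x,h_-(x)) - \bar{u}(x)| + Ch(x)\epsilon^3.\nonumber
\end{align}

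To control each boundary term, a pointwise Poincar\'e-type estimate follows from the fundamental theorem together with Cauchy-Schwarz: for any $y \in [h_-(x), h_+(x)]$,
\begin{align}
|u(x,y) - \bar{u}(x)| \leq \frac{1}{h(x)}\int_{\Omega_x}|u(x,y) - u(x,z)|dz \leq \sqrt{h(x)}\cdot\sqrt{\int_{\Omega_x}|D_y u|^2 dy}.\nonumber
\end{align}
Inserting this and applying the $2$D identity $|h'(x)| = |h_+'(x)| + |h_-'(x)|$ from Lemma \ref{lem pj-width is equal to width} finishes the proof. The essential obstacle is really this last identity: the opposite sign behavior of $h_\pm'$ forced by Lemma \ref{lem pj-width is equal to width} is precisely what permits the two boundary contributions to consolidate into a single $|h'|$ factor. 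Without this planar structure only $|h_+'| + |h_-'|$ can be extracted, which would give no improvement over the rough bound $|\eta| \leq Ch\epsilon$ of Lemma \ref{lem rough est of eta for general dim}.
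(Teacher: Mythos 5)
Your proposal is correct and follows essentially the same route as the paper: Leibniz differentiation of $h\bar{u}$, the flux identity $\int_{\Omega_x}u_x\,dy=-\mu_1(\Omega)\int_0^x h\bar{u}$, control of the boundary values $u(x,h_\pm(x))$ minus the cross-sectional average by $\sqrt{h}\bigl(\int_{\Omega_x}|D_yu|^2\bigr)^{1/2}$, the planar identity $|h'|=|h_+'|+|h_-'|$ from Lemma \ref{lem pj-width is equal to width}, and absorption of the $\tilde{u}$ versus $\bar{u}$ discrepancy into $Ch(x)\epsilon^3$ as in (\ref{extra term in eta}). The only cosmetic difference is that the paper compares the boundary values with $u$ at a mean-value point $\psi(x)$ rather than averaging over $z\in\Omega_x$ as you do, which yields the same bound.
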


\pf
{Now we can write $\int_{\Omega_x}u dy$ as follows:
\begin{align}
\int_{\Omega_x}u dy= \frac{1}{h}\int_{h_-(x)}^{h_+(x)} u(x,y)dy, \nonumber 
\end{align}

Hence we have
\begin{align}
&\bar{u}'- \frac{1}{h}\int_{\Omega_x}u_xdy+\frac{h'}{h^2}\int_{\Omega_x}udy \nonumber \\
&= \frac{1}{h}\Big\{u(x, h_+(x ) )\cdot (h_+)_x(x )- u(x, h_-(x ) )\cdot (h_-)_x(x )\Big\}. \label{bar u deriv} 
\end{align}

Define 
\begin{align}
\hat{\eta}\vcentcolon= h\cdot \bar{u}'- \int_{\Omega_x}u_xdy. \label{def of hat eta} 
\end{align}
Assume $(x, \psi(x))\in \Omega_x$ such that 
\begin{align}
\frac{1}{h}\int_{\Omega_x}udy = u(x, \psi(x)). \label{choice of psi x in ave integral} 
\end{align}

From (\ref{bar u deriv}), (\ref{def of hat eta}), (\ref{choice of psi x in ave integral}) and  (\ref{deri of h control h- and h+ deri}), we have
\begin{align}
&\quad |\hat{\eta}| \nonumber \\
&\leq |\Big\{u(x, h_+(x ) )\cdot (h_+)_x(x )- u(x, h_-(x ) )\cdot (h_-)_x(x )\Big\} - \frac{h'}{h}\int_{\Omega_x}udy|  \nonumber \\
&= |u(x, h_+(x ) )\cdot (h_+)_x(x )- u(x, h_-(x ) )\cdot (h_-)_x(x )- h'(x)\cdot u(x, \psi(x))| \nonumber \\
&= |\big[u(x, h_+(x ) )- u(x, \psi(x))\big]\cdot (h_+)_x(x ) - \big[u(x, h_-(x ) )- u(x, \psi(x))\big]\cdot (h_-)_x(x )| \nonumber\\
&\leq \big[\int_0^1 |D_{y}u\big(x, t(h_+ )+ (1- t)\psi(x)\big)|\cdot |(h_+ )- \psi(x)|]\cdot |(h_+)_x(x )|dt \nonumber \\
&\quad + \big[\int_0^1 |D_{y}u\big(x, t(h_- )+ (1- t)\psi(x)\big)|\cdot |(h_- )- \psi(x)|\big]\cdot |(h_-)_x(x )|dt \nonumber \\
&\leq \int_{\Omega_x} |D_yu|(x, y)\cdot \big\{|h_+)_x(x )|+ |(h_-)_x(x )|\big\}dy \nonumber \\
&\leq \big\{|h_+)_x(x )|+ |(h_-)_x(x )|\big\}\cdot \sqrt{h(x)}\cdot \sqrt{\int_{\Omega_x} |D_yu|^2} \nonumber \\
&= |h'(x)| \sqrt{h(x)}\cdot \sqrt{\int_{\Omega_x} |D_yu|^2}\label{est of hat eta}
\end{align}

By (\ref{extra term in eta}), (\ref{general eta formula}) and (\ref{est of hat eta}), we get
\begin{align}
|\eta|(x)&= |h\tilde{u}'(x)+ \mu_1\int_0^x h\tilde{u}dt| \nonumber \\
&\leq |h\bar{u}'(x)+ \mu_1\int_0^x h\bar{u}dt|+ \mu_1|\int_0^x h[\tilde{u}- \bar{u}]dt| \nonumber \\
&\leq |\hat{\eta}|+ Ch(x)\epsilon^3\nonumber \\
&\leq |h'(x)| \cdot \sqrt{h(x)}\cdot \sqrt{\int_{\Omega_x} |D_yu|^2} + Ch(x)\epsilon^3. \nonumber 
\end{align}
}
\qed

Using Lemma \ref{lem grad est of ODE-zeta}, there is $C> 0$ such that
\begin{align}
|\zeta'|\leq C\|x\|, \quad \quad \forall x\in [0, d]. \label{pt est of tilde u'}
\end{align}

\begin{prop}\label{prop improved est of eigenvalue gap}
{There is a universal constant $C> 0$ such that
\begin{align}
\int_0^d |\eta\tilde{u}'|dx\leq C\epsilon^{\frac{3}{2}}\sqrt{V(\Omega)}\cdot \sqrt{\int_0^d \frac{|h'|^2\|x\|^2}{h}dx}+ CV(\Omega)\cdot \epsilon^3. \nonumber 
\end{align}
}
\end{prop}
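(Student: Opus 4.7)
The plan is to reduce the integral to the interval $[\epsilon, d-\epsilon]$, apply the pointwise bound on $|\eta|$ from Lemma \ref{lem improved est of eta-2dim}, and then split the result into a dominant term (handled by Cauchy--Schwarz) and a negligible error term. First, observe that by the definition (\ref{def of tilde u}) of $\hat u$ and $\tilde u$, the function $\tilde u$ is constant on $[0,\epsilon]$ and on $[d-\epsilon,d]$, so $\tilde u'(x)=0$ outside $[\epsilon,d-\epsilon]$. Hence $\int_0^d|\eta\tilde u'|\,dx=\int_\epsilon^{d-\epsilon}|\eta\tilde u'|\,dx$. On the remaining interval, Proposition \ref{prop C0-1 close between ODE and PDE solution} gives $|\tilde u'-\zeta'|\leq C\epsilon$, while (\ref{upper bound of zeta' in x}) in Lemma \ref{lem grad est of ODE-zeta} gives $|\zeta'(x)|\leq C\|x\|$; combining these, together with $\|x\|\geq\epsilon$ on $[\epsilon,d-\epsilon]$, yields $|\tilde u'(x)|\leq C\|x\|$.

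Applying Lemma \ref{lem improved est of eta-2dim} pointwise on $[\epsilon,d-\epsilon]$ then gives
$$\int_\epsilon^{d-\epsilon}|\eta\tilde u'|\,dx\leq C\int_\epsilon^{d-\epsilon}|h'|\sqrt{h}\,\|x\|\sqrt{\int_{\Omega_x}|D_yu|^2}\,dx+C\epsilon^3\int_\epsilon^{d-\epsilon}h\,\|x\|\,dx.$$
The second term is bounded by $C\epsilon^3\cdot\max\|x\|\cdot\int_0^d h\,dx\leq C\epsilon^3 V(\Omega)$, which is exactly the claimed error term.

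For the main term, the idea is to apply Cauchy--Schwarz with the specific split
$$|h'|\sqrt{h}\,\|x\|\sqrt{\int_{\Omega_x}|D_yu|^2}\;=\;\frac{|h'|\,\|x\|}{\sqrt{h}}\,\cdot\,h\sqrt{\int_{\Omega_x}|D_yu|^2},$$
chosen so that the first factor contributes precisely $\int_0^d|h'|^2\|x\|^2/h\,dx$, the quantity appearing in Lemma \ref{lem major 2nd term}. Cauchy--Schwarz then produces
$$\sqrt{\int_0^d\frac{|h'|^2\|x\|^2}{h}\,dx}\cdot\sqrt{\int_0^d h^2\int_{\Omega_x}|D_yu|^2\,dx}.$$
For the second factor, use $h(x)\leq\max h=\hat{\epsilon}=\epsilon$ (from Lemma \ref{lem pj-width is equal to width}) to pull out $\epsilon^2$, then apply Lemma \ref{lem integ est of Dy u} to bound $\int_\Omega|D_yu|^2\leq C\epsilon V(\Omega)$, giving $\int_0^d h^2\int_{\Omega_x}|D_yu|^2\,dx\leq C\epsilon^3V(\Omega)$. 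Taking square roots produces the leading factor $C\epsilon^{3/2}\sqrt{V(\Omega)}$, and combining with the easy term completes the proof. The only non-routine step is recognising the correct Cauchy--Schwarz split; everything else is an assembly of previously established estimates, so no genuine obstacle is expected.
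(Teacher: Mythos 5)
Your proposal is correct and follows essentially the same route as the paper: restrict to $[\epsilon,d-\epsilon]$ where $\tilde u'\neq 0$, bound $|\tilde u'|$ via Proposition \ref{prop C0-1 close between ODE and PDE solution} and $|\zeta'|\leq C\|x\|$, insert Lemma \ref{lem improved est of eta-2dim}, and apply Cauchy--Schwarz with the split $\frac{|h'|\|x\|}{\sqrt{h}}\cdot h\sqrt{\int_{\Omega_x}|D_yu|^2}$ followed by $h\leq\epsilon$ and Lemma \ref{lem integ est of Dy u}. The only cosmetic difference is that you absorb the $C\epsilon$ error into $C\|x\|$ before multiplying, whereas the paper carries the $|\zeta'|+C\epsilon$ terms separately; the estimates are identical.
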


\pf
{Using Proposition \ref{prop C0-1 close between ODE and PDE solution}, we get
\begin{align}
|\tilde{u}'|\leq |\zeta'|+ C\epsilon. \nonumber 
\end{align}

From Lemma \ref{lem improved est of eta-2dim}, we have
\begin{align}
&\int_0^d |\eta\tilde{u}'|dx\leq \int_{\epsilon}^{d- \epsilon} |\eta|\cdot (|\zeta'|+ C\epsilon) \nonumber \\
&\leq \int_{\epsilon}^{d- \epsilon} |\zeta'|\cdot |h'|\cdot \sqrt{h}\cdot \sqrt{\int_{\Omega_x}|D_yu|^2} dx+ C\epsilon^3\int_0^d h|\zeta'| \nonumber \\
&+ C\epsilon\int_{\epsilon}^{d- \epsilon}  |h'|\cdot \sqrt{h}\cdot \sqrt{\int_{\Omega_x}|D_yu|^2} dx
+ C\epsilon^4\int_0^d hdx. \label{first ineq of integral of eta bar u}
\end{align}

Using (\ref{first ineq of integral of eta bar u}), (\ref{pt est of tilde u'}) and Lemma \ref{lem integ est of Dy u}, we have
\begin{align}
&\int_0^d |\eta\tilde{u}'|dx\leq C\int_{\epsilon}^{d- \epsilon} \|x\|\cdot |h'|\cdot \sqrt{h}\cdot \sqrt{\int_{\Omega_x}|D_yu|^2} dx+ CV(\Omega)\cdot \epsilon^3 \nonumber \\
&\leq C\cdot \sqrt{\int_0^d h^2(\int_{\Omega_x}|D_yu|^2dy) dx}\cdot \sqrt{\int_0^d \frac{|h'|^2\|x\|^2}{h}dx}+ CV(\Omega)\cdot \epsilon^3 \nonumber \\
&\leq C\epsilon\cdot \sqrt{\int_0^d \int_{\Omega_x}|D_yu|^2dy dx}\cdot \sqrt{\int_0^d \frac{|h'|^2\|x\|^2}{h}dx}+ CV(\Omega)\cdot \epsilon^3 \nonumber\\
&\leq C\epsilon^{\frac{3}{2}}\sqrt{V(\Omega)}\cdot \sqrt{\int_0^d \frac{|h'|^2\|x\|^2}{h}dx}+ CV(\Omega)\cdot \epsilon^3. \nonumber
\end{align}
}
\qed

\section{The inequality of eigenvalue and width}\label{sec inequality}

\begin{theorem}\label{thm HW-conj confirmed}
{There are universal constants $C_1, C_2> 0$, such that for any convex domain $\Omega\subseteq \mathbb{R}^2$ with $\mathrm{diam}(\Omega)= 2$ and $\mathrm{Width}(\Omega)\in (0, C_1)$; we have 
\begin{align}
\mu_1(\Omega)\geq \frac{\pi^2}{4}+ C_2\cdot \mathrm{Width}(\Omega)^2.  \nonumber 
\end{align}
}
\end{theorem}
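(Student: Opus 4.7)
The plan rests on Lemma \ref{lem mu1 equ with bar u},
\[
\mu_1(\Omega) \geq \mu_1(N) - \frac{\int_0^d \eta\,\tilde u'\,dx}{\int_0^d h\,\tilde u^2\,dx},
\]
together with $\int_0^d h\tilde u^2 \asymp V(\Omega)$ from (\ref{L2 upper bound of bar u}). The problem reduces to (i) proving a quantitative lower bound of the form
\[
\mu_1(N) \geq \frac{\pi^2}{d^2} + \frac{c}{V(\Omega)}\int_0^d \frac{(h')^2}{h}\|x\|^2\,dx, \qquad (\ast)
\]
and (ii) bounding $\int \eta\,\tilde u'$ via Proposition \ref{prop improved est of eigenvalue gap}, then balancing the two contributions by Cauchy--Schwartz.

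For $(\ast)$ I would apply the Liouville transform $g = h^{1/2}\zeta$. Since $h(0) = h(d) = 0$, the transformed $g$ satisfies $g(0) = g(d) = 0$, so the Dirichlet Poincar\'e inequality $\int (g')^2 \geq (\pi^2/d^2)\int g^2$ is available on $[0,d]$. Integration by parts converts the Rayleigh identity $\mu_1(N)\int h\zeta^2 = \int h(\zeta')^2$ into a Schr\"odinger-type identity whose ``rest term'' is $\int (h')^2(\zeta')^2/h$, as indicated in the introduction. The pointwise estimate $\zeta'(x) \geq C^{-1}\|x\|$ of Lemma \ref{lem grad est of ODE-zeta} then bounds this rest term below by $c\int (h')^2\|x\|^2/h$, yielding $(\ast)$.

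For (ii) write $Y = V(\Omega)^{-1}\int (h')^2\|x\|^2/h\,dx$; Proposition \ref{prop improved est of eigenvalue gap} gives
\[
\frac{|\int \eta\,\tilde u'\,dx|}{\int h\,\tilde u^2\,dx} \leq C\epsilon^{3/2}\sqrt{Y} + C\epsilon^3.
\]
The AM--GM inequality $C\epsilon^{3/2}\sqrt{Y} \leq \delta Y + C\epsilon^3/\delta$, with $\delta = c/2$ (half the constant in $(\ast)$), absorbs the first term into the gain and leaves an $O(\epsilon^3)$ error. Hence
\[
\mu_1(\Omega) - \frac{\pi^2}{4} \geq \Bigl(\frac{\pi^2}{d^2} - \frac{\pi^2}{4}\Bigr) + \frac{c}{2}Y - O(\epsilon^3).
\]
If $d \leq 2 - 10^{-6n}\epsilon^2$, the first bracket is $\geq c'\epsilon^2$; if instead $d \geq 2 - 10^{-6n}\epsilon^2$, then Lemma \ref{lem major 2nd term} combined with Lemma \ref{lem pj-width is equal to width} (which identifies $\hat\epsilon = \epsilon$ in the planar case) forces $Y \geq C\epsilon^2$, and the second term supplies the gain. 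In either case $\mu_1(\Omega) \geq \pi^2/4 + C_2\,\epsilon^2$ for $\epsilon \leq C_1$.

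The main obstacle is $(\ast)$. Because $h$ vanishes at the endpoints, $\int (h')^2\zeta^2/h$ is generically divergent (since $\zeta(0), \zeta(d) \neq 0$), so the naive Liouville identity for $\int h(\zeta')^2$ in terms of $\int (g')^2$ contains formally infinite pieces that must cancel. Carrying out this cancellation so that a \emph{positive} multiple of the rest term $\int (h')^2(\zeta')^2/h$ remains on the right-hand side is precisely where Lemma \ref{lem pj-width is equal to width} and the planar identity $|h'| = |h_+'| + |h_-'|$ intervene; this is the reason the sharp quantitative theorem is restricted to $n = 2$.
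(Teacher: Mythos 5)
Your overall architecture matches the paper's proof: Lemma \ref{lem mu1 equ with bar u} combined with Proposition \ref{prop improved est of eigenvalue gap}, absorption of the cross term by AM--GM, the dichotomy on $d$ versus $2-10^{-6n}\epsilon^2$, and Lemma \ref{lem major 2nd term} with $\hat\epsilon=\epsilon$ from Lemma \ref{lem pj-width is equal to width}. The genuine gap is your step $(\ast)$, which is exactly the new content of the theorem, and your route to it does not work. You apply the Liouville transform to $g=\sqrt{h}\,\zeta$. First, the boundary condition $g(0)=g(d)=0$ requires $h(0)=h(d)=0$, which is neither assumed in the paper nor true in general (a rectangle already violates it). Second, even granting it, the substitution gives $\int_0^d (g')^2=\int_0^d h(\zeta')^2+\int_0^d\bigl(\tfrac{(h')^2}{4h}-\tfrac{h''}{2}\bigr)\zeta^2+\tfrac12\bigl[h'\zeta^2\bigr]_0^d$ with $\int_0^d g^2=\int_0^d h\zeta^2$, so the correction relative to the Dirichlet Poincar\'e bound involves $\zeta^2$ (these are the divergent pieces you acknowledge) and enters with the \emph{wrong sign}: since $h$ is concave the term $\tfrac{(h')^2}{4h}-\tfrac{h''}{2}$ is nonnegative and is subtracted, so this identity yields at best $\mu_1(N)\ge \pi^2/d^2$ minus something, not the gain $c\int (h')^2(\zeta')^2/h$. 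Your proposed rescue --- that the planar identity $|h'|=|h_+'|+|h_-'|$ performs the cancellation --- is misattributed: that identity is used only in the improved estimate of the error term $\eta$ (Lemma \ref{lem improved est of eta-2dim}) and plays no role in the ODE eigenvalue bound.

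The correct move (the paper's Step (2), cf. Remark \ref{rem Liouville transform}) is to transform the \emph{derivative}: set $w=\sqrt{h}\,\zeta'$. Then $w(0)=w(d)=0$ follows from the Neumann condition $\zeta'(0)=\zeta'(d)=0$, with no hypothesis on $h$ at the endpoints, and $w''=-\mu_1(N)w+\bigl(\tfrac34(\tfrac{h'}{h})^2-\tfrac{h''}{2h}\bigr)w$, so $\mu_1(N)=\frac{\int_0^d (w')^2+Vw^2}{\int_0^d w^2}$ with $V\ge\tfrac34(h'/h)^2$ because $h''\le 0$ (concavity of $h$, valid since $n=2$). The Dirichlet Poincar\'e inequality for $w$ then gives $\mu_1(N)\ge \tfrac{\pi^2}{d^2}+\tfrac34\,\frac{\int_0^d (h')^2(\zeta')^2/h}{\int_0^d h(\zeta')^2}$, i.e. the gain appears on the correct side with the rest term in $(\zeta')^2$, which is what makes the sharp estimate possible. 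A secondary imprecision: Lemma \ref{lem grad est of ODE-zeta} gives $\zeta'\ge C^{-1}\|x\|$ only for $\|x\|\le 10^{-n}$, so you cannot pass pointwise to $\int_0^d (h')^2\|x\|^2/h$ over the whole interval; you need the paper's localization (Step (1) and (\ref{10-1 and 10-2})), which shows the integral over the middle is dominated by the integral over the end intervals $\mathcal{A}=[0,10^{-2}]\cup[d-10^{-2},d]$, after which Lemma \ref{lem major 2nd term} supplies the lower bound $CV(\Omega)\epsilon^2$ in the case $d\ge 2-10^{-6n}\epsilon^2$.
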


\begin{remark}\label{rem Liouville transform}
The transform $w= \sqrt{h} \zeta'$ is called Liouville transformation \cite{Liouville}.
\end{remark}

\pf
{\textbf{Step (1)}. Assume $10^{-2}<\tau_0<d-10^{-2}$, note that h is concave, then
\begin{align}
&\int_{10^{-2}}^{\tau_0}\frac{(h')^2}{h}\cdot \|x\|^2dx\le \frac{C}{\ep}\int_{10^{-2}}^{\tau_0}(h')^2\le \frac{C}{\ep}(h'(10^{-2}))^2 \nonumber\\
&\int_{0}^{10^{-2}}\frac{(h')^2}{h}\cdot \|x\|^2dx\ge \int_{10^{-2}/2}^{10^{-2}}\frac{(h')^2}{h}\cdot \|x\|^2dx\ge \frac{C}{\ep}\int_{10^{-2}/2}^{10^{-2}}(h')^2\ge \frac{C}{\ep}(h'(10^{-2}))^2 \nonumber
\end{align}
So we get $\int_{10^{-2}}^{\tau_0}\frac{(h')^2}{h}\cdot \|x\|^2dx\le 
C\int_{0}^{10^{-2}}\frac{(h')^2}{h}\cdot \|x\|^2dx$. 

Similarily we have $\int_{\tau_0}^{d-10^{-2}}\frac{(h')^2}{h}\cdot \|x\|^2dx\le C\int_{d-10^{-2}}^{d}\frac{(h')^2}{h}\cdot \|x\|^2dx$. Hence we get 
\begin{align}
\int_{[0, 10^{-2}]\cup [d- 10^{-2}, d]}\frac{(h')^2}{h}\cdot \|x\|^2dx\geq C\cdot \int_0^d \frac{(h')^2}{h}\cdot \|x\|^2dx. \label{bound of 2nd error local}
\end{align}

If $\tau_0>d-10^{-2}$,then
\begin{align}
		&\int_{10^{-2}}^{d-10^{-2}}\frac{(h')^2}{h}\cdot \|x\|^2dx\le \frac{C}{\ep}\int_{10^{-2}}^{d-10^{-2}}(h')^2\le \frac{C}{\ep}(h'(10^{-2}))^2\le C \int_{0}^{10^{-2}}\frac{(h')^2}{h}\cdot \|x\|^2dx.\nonumber
\end{align}
If $\tau_0<10^{-2}$, then $\int_{10^{-2}}^{d-10^{-2}}\frac{(h')^2}{h}\cdot \|x\|^2dx\le C\int_{d-10^{-2}}^{d}\frac{(h')^2}{h}\cdot \|x\|^2dx$.

Let $\mathcal{A}= [0, 10^{-2}]\cup [d- 10^{-2}, d]$ in the rest argument. From (\ref{bound of 2nd error local}), Lemma \ref{lem mu1 equ with bar u} and Proposition \ref{prop improved est of eigenvalue gap}, we get
\begin{align}
\mu_1(\Omega)&\ge \mu_1(N)- \frac{\int_0^d |\eta\tilde{u}'|dx}{\int_0^d h\tilde{u}^2} \nonumber \\
&\geq \mu_1(N)- C\frac{\epsilon^{\frac{3}{2}}\Big\{V(\Omega)\int_0^d  \frac{|h'|^2}{h}\|x\|^2 dx\Big\}^{\frac{1}{2}}}{\int_0^d h\zeta^2}- C\epsilon^3  \nonumber \\
&\geq \mu_1(N)- C\frac{\epsilon^{\frac{3}{2}}\Big\{V(\Omega)\int_{\mathcal{A}}  \frac{|h'|^2}{h}\|x\|^2 dx\Big\}^{\frac{1}{2}}}{\int_0^d h\zeta^2}- C\epsilon^3 . \label{eigenvalues ineq}
\end{align}

\textbf{Step (2)}. From Lemma \ref{lem grad est of ODE-zeta}, there is a universal constant $C>0$ such that
\begin{align}
|\zeta'|(x)\geq C\|x\|,   \quad \quad \forall x\in [0, 10^{-2}]\cup [d- 10^{-2}, d].\label{zeta' lower bound}
\end{align}

Let $w= \sqrt{h} \zeta'$, from $-(h\zeta')'= \mu_1(N)\cdot h\zeta$, we have
\begin{align}
		w''= -\mu_1(N) w+ (\frac{3}{4}(\frac{h'}{h})^2- \frac{h''}{2h})w, \quad \quad and \quad \quad w(0)= w(d)= 0. \nonumber
\end{align}

Also we get
\begin{align}
	\mu_1(N)= \frac{\int_0^d (w'(t))^2+ V\cdot w^2dt}{\int_0^d w(t)^2dt},  \quad \quad  V\vcentcolon= (\frac{3}{4}(\frac{h'}{h})^2- \frac{h''}{2h}). \label{expression of 1st Neug eigen to ODE} 
\end{align}

By $h''\leq 0$, from (\ref{eigenvalues ineq}), (\ref{zeta' lower bound}) and (\ref{expression of 1st Neug eigen to ODE}), using Cauchy-Schwartz inequality we have 
\begin{align}
&	\mu_1(\Omega)\geq \frac{\int_0^d (w')^2+ \frac{3}{4}\frac{(h')^2}{h^2}\cdot w^2 dt}{\int_0^d w^2dt}- C\frac{\epsilon^{\frac{3}{2}}\Big\{V(\Omega)\int_{\mathcal{A}}  \frac{|h'|^2}{h}\|x\|^2 dx\Big\}^{\frac{1}{2}}}{\int_0^d h\zeta^2}- C\epsilon^3\nonumber \\
	&\geq \frac{\pi^2}{d^2}+ \frac{\int_0^d \frac{3}{4}\frac{(h')^2}{h}(\zeta')^2dx- C\cdot \epsilon^{\frac{3}{2}}\Big\{V(\Omega)\int_{\mathcal{A}} \frac{|h'|^2}{h}\|x\|^2 dx\Big\}^{\frac{1}{2}}}{\int_0^d h(\zeta')^2}- C\epsilon^3\nonumber \\
	&\geq \frac{\pi^2}{d^2}+ \frac{C_1\int_{\mathcal{A}} \frac{(h')^2}{h}\|x\|^2dx- C\cdot \epsilon^{\frac{3}{2}}\Big\{V(\Omega)\int_{\mathcal{A}} \frac{|h'|^2}{h}\|x\|^2 dx\Big\}^{\frac{1}{2}}}{\int_0^d h(\zeta')^2}- C\epsilon^3\nonumber \\
&	\geq \frac{\pi^2}{d^2}+ \frac{\frac{C_1}{2}\int_{\mathcal{A}} \frac{(h')^2}{h}\|x\|^2dx- C_2\cdot \epsilon^{2+ \frac{1}{2}}V(\Omega) }{\int_0^d h(\zeta')^2}- C\epsilon^3  \nonumber \\
&\geq \frac{\pi^2}{d^2}+ C\frac{\int_{\mathcal{A}}  \frac{|h'|^2}{h}\|x\|^2 dx}{V(\Omega)} - C\epsilon^{2+ \frac{1}{2}} .  \label{NW-ineq with d and epsilon-general}
\end{align}

\textbf{Step (3)}. If $d\leq 2- 10^{-6n}\cdot \epsilon^2$, then the conclusion follows from (\ref{NW-ineq with d and epsilon-general}). In the rest argument, we will assume $d\geq 2- 10^{-6n}\cdot \epsilon^2$.

Similar to (\ref{bound of 2nd error local}), we have
\begin{align}
\int_{[0, 10^{-2}]} \frac{(h')^2}{h}\|x\|^2dx \geq C\cdot \int_{[0, 10^{-1}]} \frac{(h')^2}{h}\|x\|^2dx . \label{10-1 and 10-2}
\end{align}
Therefore from Lemma \ref{lem major 2nd term} and (\ref{10-1 and 10-2}),  we get 
\begin{align}
\int_{\mathcal{A}} \frac{(h')^2}{h}\|x\|^2dx\geq C\cdot \int_{[0, 10^{-1}]} \frac{(h')^2}{h}\|x\|^2dx\geq CV(\Omega)\epsilon^2. \label{first phi crucial est-slice}
\end{align}

Putting (\ref{first phi crucial est-slice}) into (\ref{NW-ineq with d and epsilon-general}), we get
\begin{align}
\mu_1(\Omega)\geq \frac{\pi^2}{4}+\frac{CV(\Omega)\epsilon^2}{V(\Omega)} - C\epsilon^{2+ \frac{1}{2}}\geq \frac{\pi^2}{4}+ C\epsilon^2. \nonumber 
\end{align}
}
\qed

\appendix

\section{The eigenvalue and Rayleigh quotient}

\begin{lemma}\label{lem existence of minimizer-new}
{There exists $\varphi\in H-\{0\}$ such that $\mathscr{R}(\varphi)= \min\limits_{f\in H-\{0\}}\mathscr{R}(f)$. Furthermore, $\varphi$ satisfies 
\begin{align}
			\int_0^d \varphi'\cdot v'hdt= \mathscr{R}(\varphi)\int_0^d \varphi\cdot vhdt ,  \quad \quad \forall v\in W^{1,2}(0,d).  \nonumber 
\end{align}
}
\end{lemma}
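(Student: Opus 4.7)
The plan is the standard direct method of the calculus of variations in the weighted space $L^2(h\,dx)$, followed by a routine first variation argument to extract the Euler--Lagrange equation. First I take a minimizing sequence $\{f_n\}\subset H\setminus\{0\}$, rescaled so that $\int_0^d h f_n^2=1$; then $\int_0^d h(f_n')^2\to \mu_1(N)$. Since $h$ is continuous on $[0,d]$ with $h>0$ on $(0,d)$ (and, for $n=2$, concave), on every $[a,b]\Subset(0,d)$ the weight satisfies $h\ge c_{a,b}>0$, so the bounds on $\int_0^d h f_n^2$ and $\int_0^d h(f_n')^2$ give uniform $W^{1,2}(a,b)$ bounds on $f_n$. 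A diagonal argument together with Rellich--Kondrachov yields a subsequence (not relabeled) with $f_n\to\varphi$ in $L^2_{\mathrm{loc}}(0,d)$ and $f_n\rightharpoonup\varphi$ in $W^{1,2}_{\mathrm{loc}}(0,d)$.

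Next I check three properties of the limit: weak lower semicontinuity, tightness at the endpoints, and admissibility. Weak lower semicontinuity of the weighted Dirichlet form on each $[a,b]$ combined with monotone convergence as $a\to 0^+$, $b\to d^-$ gives $\int_0^d h(\varphi')^2\le \liminf_n\int_0^d h(f_n')^2=\mu_1(N)$. For tightness, the concavity of $h^{1/(n-1)}$ together with $h(0)=h(d)=0$ yields $h(x)\le C\min\{x,d-x\}$, so using Cauchy--Schwarz on
\begin{align}
|f_n(x)-f_n(\delta)|^2= \Big|\int_x^\delta f_n'\,dt\Big|^2\le \Big(\int_x^\delta \frac{1}{h}\,dt\Big)\Big(\int_x^\delta h(f_n')^2\,dt\Big)\nonumber
\end{align}
and inserting this into $\int_0^\delta h f_n^2$ gives $\int_0^\delta h f_n^2\le \omega(\delta)$ with $\omega(\delta)\to 0$ uniformly in $n$, and similarly on $[d-\delta,d]$. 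Combined with the local $L^2$ convergence this shows $\int_0^d h\varphi^2=\lim_n\int_0^d h f_n^2=1$ and $\int_0^d h\varphi=\lim_n\int_0^d h f_n=0$, so $\varphi\in H\setminus\{0\}$. Hence $\mathscr{R}(\varphi)\le\mu_1(N)$, and by definition of $\mu_1(N)$ equality holds; so $\varphi$ is a minimizer.

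For the Euler--Lagrange equation, given any $v\in W^{1,2}(0,d)$, set $\bar v=\bigl(\int_0^d h\bigr)^{-1}\!\int_0^d hv$, so that $w\vcentcolon= v-\bar v\in H$ and $\varphi+tw\in H$ for all $t\in\mathbb{R}$. The scalar function $t\mapsto\mathscr{R}(\varphi+tw)$ attains its minimum at $t=0$, so differentiating
\begin{align}
\mathscr{R}(\varphi+tw)=\frac{\int_0^d h(\varphi'+tw')^2}{\int_0^d h(\varphi+tw)^2}\nonumber
\end{align}
at $t=0$ and using $\mathscr{R}(\varphi)=\mu_1(N)$ yields $\int_0^d h\varphi'w'=\mu_1(N)\int_0^d h\varphi w$. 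Since constants annihilate both sides (the left trivially, the right by $\varphi\in H$), the identity extends from $w$ to $v=w+\bar v$, giving the stated weak formulation.

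The main obstacle is the tightness step: because $h$ degenerates at both endpoints, $L^2(h\,dx)$ is not a standard Sobolev space and one needs a Hardy-type estimate to prevent mass from escaping to $\{0,d\}$. The concavity bound $h(x)\le C\min\{x,d-x\}$ from Brunn--Minkowski, together with the above Cauchy--Schwarz computation using $\int_x^\delta h^{-1}\sim \log(\delta/x)$ near $0$, is precisely what makes this step go through and is the only nonroutine ingredient.
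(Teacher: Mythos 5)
Your strategy (direct method applied to the degenerate weighted problem, with a Hardy-type tightness estimate at the endpoints) is genuinely different from the paper's, which first treats weights bounded below, then approximates $h$ by $h_k=(h^{1/(n-1)}+1/k)^{n-1}$, derives uniform $C^2$ bounds on the approximate eigenfunctions $G_k$ from the ODE and concavity, and passes to the limit by Arzel\`a--Ascoli. As written, however, your argument has a genuine gap at the step ``so $\varphi\in H\setminus\{0\}$''. Membership in $H$ requires $\varphi\in W^{1,2}(0,d)$ in the \emph{unweighted} Sobolev norm, whereas the limit you construct is only known to satisfy $\int_0^d h(\varphi')^2<\infty$ and $\int_0^d h\varphi^2<\infty$; when $h$ degenerates at an endpoint these are strictly weaker (e.g.\ $\varphi'\sim h^{-1/2}$ is weighted-square-integrable but not square-integrable for $h\sim x$). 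Without $\varphi\in H$ you can neither invoke the definition of $\mu_1(N)$ as an infimum over $H$ to get $\mathscr{R}(\varphi)=\mu_1(N)$, nor assert $\varphi+tw\in H$ in the first-variation step. Closing this requires first deriving $(h\varphi')'=-\mu h\varphi$ on $(0,d)$ together with the natural boundary condition $h\varphi'\to 0$ at the endpoints, and then bootstrapping $|h\varphi'(x)|\le\mu\int_0^x h|\varphi|\le\mu(xh(x))^{1/2}$ with the concavity lower bound $h(x)\ge c\,x\,V(\Omega)$ to get $\varphi'\in L^\infty$ --- or else minimizing from the start over the larger weighted space and proving the regularity a posteriori. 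This is exactly the difficulty the paper's approximation scheme is designed to bypass, and the proposal omits it.

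A secondary error: the inequality $h(x)\le C\min\{x,d-x\}$, which you single out as the key ingredient, is false in general. The hypothesis $h(0)=h(d)=0$ need not hold (for a thin rectangle $h$ is constant), and even when $h(0)=0$, concavity of $h^{1/(n-1)}$ gives a linear \emph{lower} bound $h(x)\ge\frac{x}{x_1}h(x_1)$ but no linear upper bound ($h(x)=\sqrt{x}$ is concave). Your tightness step can nonetheless be salvaged for a different reason: by Fubini,
\begin{align}
\int_0^\delta h(x)\Big(\int_x^\delta \frac{dt}{h(t)}\Big)dx=\int_0^\delta \frac{1}{h(t)}\Big(\int_0^t h\Big)dt\le \int_0^\delta t\,dt,\nonumber
\end{align}
using only that $h$ is nondecreasing on $[0,\tau_0]$; what remains, and what the proposal glosses over, is the term $f_n(\delta)^2\int_0^\delta h$, which is controlled by the $L^2(h\,dx)$ normalization at an interior point, the lower bound $h(t)\ge c\,t\,V(\Omega)$, and $\int_0^\delta h\le\delta h(\delta)\le C\delta V(\Omega)$. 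So the Hardy estimate goes through, but not for the reason you give, and it does not by itself repair the admissibility gap above.
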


\begin{proof}
\textbf{Step (1)}. Denote $\mu=\min\limits_{f\in H-\{0\}}\mathscr{R}(f)=\frac{\int_0^d h(f')^2}{\int_{0}^d hf^2}$.  First assume $h\ge c>0$, choose $\varphi_k\in H$ such that $\mathscr{R}(\varphi_k)\to \mu, \mathscr{R}(\varphi_k)\le \mu+1$. Let $f_k=\frac{\varphi_k}{||\varphi_k||_{L^2(0,d)}}$. Then $||f_k||_{L^2}=1$, and 
\[\int_0^d (f_k')^2\le \frac{1}{c}\int_{0}^{d}h (f_k')^2\le \frac{\mu+1}{c}\int_{0}^{d}h (f_k)^2\le C\]

By Sobolev Compactness Theorem and and Banach-Alaoglu Theorem, there exists $G\in L^2, g\in L^2$, and $\{f_{n_k}\}\subset \{f_k\}$.
\begin{align}
		\lim_{k\to \infty}|f_{n_k}-G|_{L^2}= 0,\quad \lim_{k\to \infty}\int_0^d (f_{n_k}'-g)v=0, \quad \forall v\in L^2\nonumber
\end{align}
For any $v\in C_c^{\infty}$,
\begin{align}
		\int_0^d-Gv'=\int_0^d\lim_{k\to \infty}-f_{n_k}v'=\int_0^d\lim_{k\to \infty}f_{n_k}'v=\int_0^dgv.\nonumber
\end{align}
Thus $G'=g$ and $G\in H-\{0\}$.
\begin{align}
		\mu
		&=\lim_{k\to \infty}\frac{\int h(f_{n_k}')^2}{\int h(f_{n_k})^2}\geq \frac{\varliminf\limits_{k\rightarrow \infty} \int_0^d h(f_{n_k}')^2}{\int hG^2}\nonumber\\
		&\geq \frac{\varliminf\limits_{k\rightarrow \infty} \int_0^d h(G')^2+2hG'(f_{n_k}'-G')}{\int hG^2}=\frac{ \int_0^d h(G')^2}{\int hG^2}\nonumber
\end{align}
Let $0= \frac{d}{dt}\mathscr{R}(G+ tv)\big|_{t= 0}$, we get
\begin{align}
		\int_0^d G'\cdot v'hdt= \mu\int_0^d G\cdot vhdt ,  \quad \quad \forall v\in H-\{0\}.  \nonumber 
\end{align}
Note that the above equality holds for $v=c, c\in \R$. Assume $f\in W^{1,2}(0,d)$ and $f$ is not constant function, then 
\begin{align}
\int_0^d G'\cdot f'hdt=\int_0^d G'\cdot (f-\int_{0}^{d}hf)'hdt=\mu\int_0^d G\cdot (f-\int_{0}^{d}hf)hdt=\mu\int_0^d G\cdot fhdt.\nonumber
\end{align}

Thus
\begin{align}
	\int_0^d G'\cdot v'hdt= \mu\int_0^d G\cdot vhdt ,  \quad \quad \forall v\in W^{1,2}(0,d).  \nonumber 
\end{align}

\textbf{Step (2)}. If $h\ge 0$, we choose $h_k= (h^{\frac{1}{n- 1}}+1/k)^{n- 1}$. For $h_k$, from \textbf{Step (1)}, we can find $G_k$ such that 
\begin{align}
	\int_0^d G_k'\cdot v'h_kdt= \mu(k)\int_0^d G_k\cdot vh_kdt ,  \quad \quad \forall v\in W^{1,2}(0,d).  \nonumber 
\end{align}
By regularity theory, we have $G_k\in C^{\infty}[0,d]$. Choose $v\in C_c^{\infty}(0,d)$, we get $(h_kG_k')'+\mu(k)h_kG_k=0$. We assume $\sup_{x\in [0,d]}|G_k|=1, h'(\tau_0)=0$ and note $h_k^{\frac{1}{n- 1}}$ is concave, then
\begin{align}
	|G_k'(x)|=
	\begin{cases}
		&=|\frac{1}{h_k(x)}\int_{0}^x\mu(k)h_k(s)G_k(s)|\le Cx, \quad  x\in [0,\tau_0]\\
		&=|\frac{1}{h_k(x)}\int_{x}^d\mu(k)h_k(s)G_k(s)|\le C(d-x), \quad  x\in [\tau_0,d]
	\end{cases}
\end{align}
So $\sup|G_k|_{C^1[0,d]}<C$. 

Since $h_k^{\frac{1}{n- 1}}$ is concave, then for $x<\tau_0$, $h_k^{\frac{1}{n- 1}}(x)=\int_{0}^{x}(h_k^{\frac{1}{n- 1}})'\ge x(h_k^{\frac{1}{n- 1}})'(x)=xh_k'(x)h_k^{\frac{1}{n- 1}-1}(x)$. We get
\begin{align}
		|\frac{h_k'(x)}{h_k(x)}|=
		\begin{cases}
			&=|\frac{h_k'(x)}{\int_{0}^xh_k'(s)ds}|\le |\frac{h_k'(x)}{h_k(0)+\int_{0}^xh_k'(s)ds}|\le \frac{h_k'(x)}{xh_k'(x)}\le\frac{1}{x} , \quad  x\in (0,\tau_0]\\
			&\le \frac{1}{d-x}, \quad  x\in [\tau_0,d)
		\end{cases}
\end{align}

Then
\begin{align}
	|G_k''(x)|=|-\frac{h_k'(x)}{h_k(x)}G_k'(x)-\mu(k)G_k(x)|\le 
	|\frac{h_k'(x)}{h_k(x)}G_k'(x)|+C|G_k(x)|\le C.\nonumber
\end{align}
we have $|G_k|_{C^2[0,d]}<C$. By Arzela-Ascoli theorem we can choose $G_{n_k}\to \varphi$ in $C^1[0,d]$ norm for some $\varphi$. For any $f\in H-\{0\}$ we have
\begin{align}
	\frac{\int_0^d hf'^2}{\int_0^d hf^2}=\lim_{k\to \infty}\frac{\int_0^d h_kf'^2}{\int_0^d h_kf^2}\ge\varlimsup_{k\to \infty} \mu(k)\ge\lim_{k\to \infty}\frac{\int_0^d h_{n_k}G_{n_k}'^2}{\int_0^d h_{n_k}G_{n_k}^2}=\frac{\int_0^d h\varphi'^2}{\int_0^d h\varphi^2}\nonumber
\end{align}
Thus we have $\mu=\mathscr{R}(\varphi)$. Let $0= \frac{d}{dt}\mathscr{R}(\varphi+ tv)\big|_{t= 0}$, we get
\begin{align}
		\int_0^d \varphi'\cdot v'hdt= \mathscr{R}(\varphi)\int_0^d \varphi\cdot vhdt ,  \quad \quad \forall v\in H-\{0\}.\nonumber
\end{align}

Note that the above equality holds for $v=c, c\in \R$. Assume $f\in W^{1,2}(0,d)$ and $f$ is not constant function, then
\begin{align}
	\int_0^d \varphi'\cdot f'hdt=\int_0^d \varphi'\cdot (f-\int_{0}^{d}hf)'hdt=\mu\int_0^d \varphi\cdot (f-\int_{0}^{d}hf)hdt=\mu\int_0^d \varphi\cdot fhdt\nonumber.
\end{align}

Thus
\begin{align}
	\int_0^d \varphi'\cdot v'hdt= \mu\int_0^d \varphi\cdot vhdt ,  \quad \quad \forall v\in W^{1,2}(0,d).  \nonumber 
\end{align}
\end{proof}

\begin{bibdiv}
\begin{biblist}

\bib{ABF}{article}{
    AUTHOR = {Amato,Vincenzo },
    author= {Bucur,Dorin },
    author= {Fragal\`a, Ilaria },
     TITLE = {The geometric size of the fundamental gap},
   JOURNAL = { arXiv:2407.01341v1 [math.SP] },
          URL = {https://doi.org/10.48550/arXiv.2407.01341},
}

\bib{AC}{article}{
    AUTHOR = {Andrews, Ben},
    author= {Clutterbuck, Julie},
     TITLE = {Sharp modulus of continuity for parabolic equations on
              manifolds and lower bounds for the first eigenvalue},
   JOURNAL = {Anal. PDE},
  FJOURNAL = {Analysis \& PDE},
    VOLUME = {6},
      YEAR = {2013},
    NUMBER = {5},
     PAGES = {1013--1024},
      ISSN = {2157-5045,1948-206X},
   MRCLASS = {35R01 (35B65 35K59 35P15)},
  MRNUMBER = {3125548},
MRREVIEWER = {Rodica\ Luca},
       DOI = {10.2140/apde.2013.6.1013},
       URL = {https://tlink.lib.tsinghua.edu.cn:443/https/443/org/doi/yitlink/10.2140/apde.2013.6.1013},
}

\bib{BDV}{article}{
    AUTHOR = {Brasco, Lorenzo},
    author= {De Philippis, Guido},
    author= {Velichkov, Bozhidar},
     TITLE = {Faber-{K}rahn inequalities in sharp quantitative form},
   JOURNAL = {Duke Math. J.},
  FJOURNAL = {Duke Mathematical Journal},
    VOLUME = {164},
      YEAR = {2015},
    NUMBER = {9},
     PAGES = {1777--1831},
      ISSN = {0012-7094},
   MRCLASS = {49R05 (47A75 49Q20)},
  MRNUMBER = {3357184},
MRREVIEWER = {Antoine Lemenant},
       DOI = {10.1215/00127094-3120167},
       URL = {https://doi.org/10.1215/00127094-3120167},
}
	
\bib{CJK}{article}{
    AUTHOR = {Choi, Sunhi},
    author= {Jerison, David},
    author= {Kim, Inwon},
     TITLE = {Locating the first nodal set in higher dimensions},
   JOURNAL = {Trans. Amer. Math. Soc.},
  FJOURNAL = {Transactions of the American Mathematical Society},
    VOLUME = {361},
      YEAR = {2009},
    NUMBER = {10},
     PAGES = {5111--5137},
      ISSN = {0002-9947},
   MRCLASS = {35J05 (35P05)},
  MRNUMBER = {2515805},
MRREVIEWER = {Sergey G. Pyatkov},
       DOI = {10.1090/S0002-9947-09-04729-1},
       URL = {https://doi.org/10.1090/S0002-9947-09-04729-1},
}

\bib{Grieser}{incollection}{
    AUTHOR = {Grieser, Daniel},
     TITLE = {Thin tubes in mathematical physics, global analysis and
              spectral geometry},
 BOOKTITLE = {Analysis on graphs and its applications},
    SERIES = {Proc. Sympos. Pure Math.},
    VOLUME = {77},
     PAGES = {565--593},
 PUBLISHER = {Amer. Math. Soc., Providence, RI},
      YEAR = {2008},
   MRCLASS = {58J50 (35P05 81Q35)},
  MRNUMBER = {2459891},
MRREVIEWER = {Karl Michael Schmidt},
       DOI = {10.1090/pspum/077/2459891},
       URL = {https://doi.org/10.1090/pspum/077/2459891},
}

\bib{GJ}{article}{
     AUTHOR = {Grieser, Daniel},
    author= {Jerison, David},
     TITLE = {The size of the first eigenfunction of a convex planar domain},
   JOURNAL = {J. Amer. Math. Soc.},
  FJOURNAL = {Journal of the American Mathematical Society},
    VOLUME = {11},
      YEAR = {1998},
    NUMBER = {1},
     PAGES = {41--72},
      ISSN = {0894-0347},
   MRCLASS = {35J25 (35P15 47F05)},
  MRNUMBER = {1470858},
MRREVIEWER = {W. D. Evans},
       DOI = {10.1090/S0894-0347-98-00254-9},
       URL = {https://doi.org/10.1090/S0894-0347-98-00254-9},
}

\bib{HW}{article}{
    AUTHOR = {Hang, Fengbo},
    author= {Wang, Xiaodong},
     TITLE = {A remark on {Z}hong-{Y}ang's eigenvalue estimate},
   JOURNAL = {Int. Math. Res. Not. IMRN},
  FJOURNAL = {International Mathematics Research Notices. IMRN},
      YEAR = {2007},
    NUMBER = {18},
     PAGES = {Art. ID rnm064, 9},
      ISSN = {1073-7928},
   MRCLASS = {53C21 (58J50)},
  MRNUMBER = {2358887},
MRREVIEWER = {Fr\'{e}d\'{e}ric Robert},
       DOI = {10.1093/imrn/rnm064},
       URL = {https://doi.org/10.1093/imrn/rnm064},
}

\bib{Hersch}{article}{
    AUTHOR = {Hersch, Joseph},
     TITLE = {Sur la fr\'{e}quence fondamentale d'une membrane vibrante:
              \'{e}valuations par d\'{e}faut et principe de maximum},
   JOURNAL = {Z. Angew. Math. Phys.},
  FJOURNAL = {Zeitschrift f\"{u}r Angewandte Mathematik und Physik. ZAMP.
              Journal of Applied Mathematics and Physics. Journal de
              Math\'{e}matiques et de Physique Appliqu\'{e}es},
    VOLUME = {11},
      YEAR = {1960},
     PAGES = {387--413},
      ISSN = {0044-2275},
   MRCLASS = {35.80},
  MRNUMBER = {125319},
MRREVIEWER = {H. F. Weinberger},
       DOI = {10.1007/BF01604498},
       URL = {https://doi-org-s.qh.yitlink.com:8444/10.1007/BF01604498},
}

\bib{Jer95}{article}{
	AUTHOR = {Jerison, David},
	TITLE = {The diameter of the first nodal line of a convex domain},
	JOURNAL = {Ann. of Math. (2)},
	FJOURNAL = {Annals of Mathematics. Second Series},
	VOLUME = {141},
	YEAR = {1995},
	NUMBER = {1},
	PAGES = {1--33},
	ISSN = {0003-486X,1939-8980},
	MRCLASS = {35P15 (31A25 35J99)},
	MRNUMBER = {1314030},
	MRREVIEWER = {Jan\ Bochenek},
	DOI = {10.2307/2118626},
	URL = {https://doi.org/10.2307/2118626},
}

\bib{Jer00}{article}{
	AUTHOR = {Jerison, David},
	TITLE = {Locating the first nodal line in the {N}eumann problem},
	JOURNAL = {Trans. Amer. Math. Soc.},
	FJOURNAL = {Transactions of the American Mathematical Society},
	VOLUME = {352},
	YEAR = {2000},
	NUMBER = {5},
	PAGES = {2301--2317},
	ISSN = {0002-9947,1088-6850},
	MRCLASS = {35P15 (35J05)},
	MRNUMBER = {1694293},
	MRREVIEWER = {Guido\ Sweers},
	DOI = {10.1090/S0002-9947-00-02546-0},
	URL = {https://doi.org/10.1090/S0002-9947-00-02546-0},
}

\bib{Klartag}{article}{
    AUTHOR = {Klartag, Bo'az},
     TITLE = {Needle decompositions in {R}iemannian geometry},
   JOURNAL = {Mem. Amer. Math. Soc.},
  FJOURNAL = {Memoirs of the American Mathematical Society},
    VOLUME = {249},
      YEAR = {2017},
    NUMBER = {1180},
     PAGES = {v+77},
      ISSN = {0065-9266},
      ISBN = {978-1-4704-2542-5; 978-1-4704-4127-2},
   MRCLASS = {53C21 (52A20 52A40)},
  MRNUMBER = {3709716},
MRREVIEWER = {Vasyl Gorkavyy},
       DOI = {10.1090/memo/1180},
       URL = {https://doi.org/10.1090/memo/1180},
}

\bib{Kroger}{article}{
    AUTHOR = {Kr\"{o}ger, Pawel},
     TITLE = {On the spectral gap for compact manifolds},
   JOURNAL = {J. Differential Geom.},
  FJOURNAL = {Journal of Differential Geometry},
    VOLUME = {36},
      YEAR = {1992},
    NUMBER = {2},
     PAGES = {315--330},
      ISSN = {0022-040X},
   MRCLASS = {58G25 (53C25)},
  MRNUMBER = {1180385},
MRREVIEWER = {Johan Tysk},
       URL = {http://projecteuclid.org/euclid.jdg/1214448744},
}
	
\bib{Kroger-upper}{article}{
    AUTHOR = {Kr\"{o}ger, Pawel},
     TITLE = {On upper bounds for high order {N}eumann eigenvalues of convex
              domains in {E}uclidean space},
   JOURNAL = {Proc. Amer. Math. Soc.},
  FJOURNAL = {Proceedings of the American Mathematical Society},
    VOLUME = {127},
      YEAR = {1999},
    NUMBER = {6},
     PAGES = {1665--1669},
      ISSN = {0002-9939,1088-6826},
   MRCLASS = {35P15 (34L15 35J25)},
  MRNUMBER = {1486739},
MRREVIEWER = {Julian\ Edward},
       DOI = {10.1090/S0002-9939-99-04804-2},
       URL = {https://doi.org/10.1090/S0002-9939-99-04804-2},
}

\bib{Li}{article}{
    AUTHOR = {Li, Peter},
     TITLE = {A lower bound for the first eigenvalue of the {L}aplacian on a
              compact manifold},
   JOURNAL = {Indiana Univ. Math. J.},
  FJOURNAL = {Indiana University Mathematics Journal},
    VOLUME = {28},
      YEAR = {1979},
    NUMBER = {6},
     PAGES = {1013--1019},
      ISSN = {0022-2518,1943-5258},
   MRCLASS = {58G25},
  MRNUMBER = {551166},
MRREVIEWER = {Shiu-Yuen\ Cheng},
       DOI = {10.1512/iumj.1979.28.28075},
       URL = {https://doi.org/10.1512/iumj.1979.28.28075},
}

\bib{LY}{incollection}{
    AUTHOR = {Li, Peter},
    author= {Yau, Shing Tung},
     TITLE = {Estimates of eigenvalues of a compact {R}iemannian manifold},
 BOOKTITLE = {Geometry of the {L}aplace operator ({P}roc. {S}ympos. {P}ure
              {M}ath., {U}niv. {H}awaii, {H}onolulu, {H}awaii, 1979)},
    SERIES = {Proc. Sympos. Pure Math.},
    VOLUME = {XXXVI},
     PAGES = {205--239},
 PUBLISHER = {Amer. Math. Soc., Providence, RI},
      YEAR = {1980},
      ISBN = {0-8218-1439-7},
   MRCLASS = {58G25 (53C20)},
  MRNUMBER = {573435},
MRREVIEWER = {P.\ G\"{u}nther},
}

\bib{Liouville}{article}{
	title={Second M{\'e}moire sur le d{\'e}veloppement des fonctions ou parties de fonctions en s{\'e}ries dont les divers termes sont assujettis {\`a} satisfaire {\`a} une m{\^e}me {\'e}quation diff{\'e}rentielle du second ordre, contenant un param{\`e}tre variable},
	author={Liouville, Joseph},
	journal={Journal de math{\'e}matiques pures et appliqu{\'e}es},
	volume={2},
	pages={16--35},
	year={1837}
}

\bib{MH}{article}{
    AUTHOR = {M\'{e}ndez-Hern\'{a}ndez, Pedro J.},
     TITLE = {Brascamp-{L}ieb-{L}uttinger inequalities for convex domains of
              finite inradius},
   JOURNAL = {Duke Math. J.},
  FJOURNAL = {Duke Mathematical Journal},
    VOLUME = {113},
      YEAR = {2002},
    NUMBER = {1},
     PAGES = {93--131},
      ISSN = {0012-7094},
   MRCLASS = {31B35},
  MRNUMBER = {1905393},
MRREVIEWER = {Catherine Bandle},
       DOI = {10.1215/S0012-7094-02-11313-1},
       URL = {https://doi.org/10.1215/S0012-7094-02-11313-1},
}

\bib{PW}{article}{
    AUTHOR = {Payne, L. E.},
    author= {Weinberger, H. F.},
     TITLE = {An optimal {P}oincar\'{e} inequality for convex domains},
   JOURNAL = {Arch. Rational Mech. Anal.},
  FJOURNAL = {Archive for Rational Mechanics and Analysis},
    VOLUME = {5},
      YEAR = {1960},
     PAGES = {286--292},
      ISSN = {0003-9527},
   MRCLASS = {35.00},
  MRNUMBER = {117419},
MRREVIEWER = {I.\ Stakgold},
       DOI = {10.1007/BF00252910},
       URL = {https://doi.org/10.1007/BF00252910},
}

\bib{Protter}{article}{
    AUTHOR = {Protter, M. H.},
     TITLE = {A lower bound for the fundamental frequency of a convex
              region},
   JOURNAL = {Proc. Amer. Math. Soc.},
  FJOURNAL = {Proceedings of the American Mathematical Society},
    VOLUME = {81},
      YEAR = {1981},
    NUMBER = {1},
     PAGES = {65--70},
      ISSN = {0002-9939},
   MRCLASS = {35P15},
  MRNUMBER = {589137},
MRREVIEWER = {J. Hersch},
       DOI = {10.2307/2043987},
       URL = {https://doi-org-s.qh.yitlink.com:8444/10.2307/2043987},
}

\bib{Sakai}{incollection}{
    AUTHOR = {Sakai, Takashi},
     TITLE = {Curvature---up through the twentieth century, and into the
              future? [translation of {S}\={u}gaku {\bf 54} (2002), no. 3,
              292--307; MR1929898]},
      NOTE = {Sugaku Expositions},
   JOURNAL = {Sugaku Expositions},
  FJOURNAL = {Sugaku Expositions},
    VOLUME = {18},
      YEAR = {2005},
    NUMBER = {2},
     PAGES = {165--187},
      ISSN = {0898-9583,2473-585X},
   MRCLASS = {53-03 (53C20)},
  MRNUMBER = {2182883},
}

\bib{AS}{book}{
     TITLE = {Handbook of mathematical functions, with formulas, graphs, and
              mathematical tables},
    EDITOR = {Abramowitz, Milton},
    author= {Stegun, Irene A.},
 PUBLISHER = {Dover Publications, Inc., New York},
      YEAR = {1966},
     PAGES = {xiv+1046},
   MRCLASS = {65.05 (00.20)},
  MRNUMBER = {208797},
}
  
\bib{Zheng}{article}{
    AUTHOR = {Zheng, Fan},
     TITLE = {Locating the first nodal set in higher dimensions},
   JOURNAL = {arXiv:1312.0101v2 [math.AP]},
          URL = {https://doi.org/10.48550/arXiv.1312.0101},
}

\bib{ZY}{article}{
    AUTHOR = {Zhong, Jia Qing},
    author = {Yang, Hong Cang},
     TITLE = {On the estimate of the first eigenvalue of a compact
              {R}iemannian manifold},
   JOURNAL = {Sci. Sinica Ser. A},
  FJOURNAL = {Scientia Sinica. Series A. Mathematical, Physical,
              Astronomical \& Technical Sciences},
    VOLUME = {27},
      YEAR = {1984},
    NUMBER = {12},
     PAGES = {1265--1273},
      ISSN = {0253-5831},
   MRCLASS = {58G25},
  MRNUMBER = {794292},
MRREVIEWER = {Domenico Perrone},
}

\end{biblist}
\end{bibdiv}

\end{document}